\documentclass[12pt]{amsart}
\usepackage[utf8]{inputenc}
\usepackage{amsmath,amsthm,amsfonts,amssymb,amsxtra}
\usepackage{mathrsfs}

\usepackage{ amsmath, amsthm, amsfonts, amssymb, color}
\usepackage{mathrsfs}
\usepackage{amsfonts, amsmath}
\usepackage{amsmath,amstext,amsthm,amssymb,amsxtra}
\usepackage{txfonts} %also pxfonts
\usepackage[colorlinks, citecolor=blue,pagebackref,hypertexnames=false]{hyperref}
\allowdisplaybreaks
\usepackage{pgf,tikz}
\usetikzlibrary{calc}
\usepackage{caption} % for \captionof
\usepackage{enumitem}

\newcommand\R{\mathbb{R}}

\newcommand\dd{\,{\rm d}}
\newcommand{\cdo}{\, \cdot \,}

\newcommand{\supp}{{\rm supp}{\hspace{.05cm}}}

\newcommand{\dist}{\operatorname{dist}}
\newcommand{\diver}{\operatorname{div}}
\newcommand{\cA}{\mathcal A}

\newcommand{\ip}[2]{\left\langle #1,#2\right\rangle}
\newcommand{\norm}[2]{\left\lVert #1\right\rVert_{#2}}
\newcommand{\abs}[1]{\left\lvert #1\right\rvert}

\newtheorem{theorem}{Theorem}[section]
\newtheorem{proposition}[theorem]{Proposition}
\newtheorem{corollary}[theorem]{Corollary}
\newtheorem{lemma}[theorem]{Lemma}

\newtheorem{problem}[theorem]{Problem}

\theoremstyle{remark}
\newtheorem{remark}[theorem]{Remark}

\begin{document}

\title{Stability of the Riesz-type inequalities under gluing}
\author{Dangyang He}  
\address{Dangyang He, Department of Mathematics, Sun Yat-sen (Zhongshan) University, Guangzhou, 510275, P.R. China\\
Formerly Department of Mathematics, Macquarie University, Sydney, Australia}
\email{hedy28@mail.sysu.edu.cn, dangyang.he@hdr.mq.edu.au}

\subjclass[2020]{Primary 42B20; Secondary 35J25, 58J35}

\keywords{Riesz transform, reverse Riesz inequality, connected sum, exterior Lipschitz domain}

\begin{abstract}
We establish stability principles under compact gluing for the reverse
square-root inequality and the Riesz transform.  Below a common Sobolev
dimension, the reverse estimate passes from finitely many model manifolds
to their connected sum. A boundary-compatible Poisson parametrix also proves the Neumann reverse inequality for every $1<p<\infty$ for real symmetric uniformly elliptic divergence-form operators on exterior Lipschitz domains. For the forward transform, we glue a bounded-core model and an independent whole-space end model that agree with the coefficients on overlapping regions.  A homogeneous Fredholm construction then resolves the form-valued error.  The resulting range is controlled by the two model exponents, with the additional restriction $p<n$ for Dirichlet data. More broadly, our method illuminates the interplay between the bounded interior and exterior problems, and provides a flexible framework for studying other boundary value problems.

\end{abstract}

\maketitle

\hypersetup{linkcolor=red}
\tableofcontents
\hypersetup{linkcolor=red}

\section{Introduction}

\subsection*{Background}

For a nonnegative Laplace--Beltrami operator $\Delta$ on a complete
Riemannian manifold, the representation theorem for closed forms yields
\[
 \|\Delta^{1/2}f\|_2=\|df\|_2,
 \qquad
 \|d\Delta^{-1/2}u\|_2=\|u\|_2
\]
for $u$ in the closure of the range of $\Delta$; the kernel is treated in
the usual orthogonal quotient. For a real symmetric divergence-form operator $L=-\diver(A\nabla)$, the corresponding statement is
\[
 \|L^{1/2}f\|_2^2
   =\int A\nabla f\mathbin{\cdo}\nabla f,
\]
and the exact first-order transform is $A^{1/2}\nabla L^{-1/2}$.  Uniform
ellipticity implies equivalence, for the
Euclidean-gradient expressions $\|L^{1/2}f\|_2$ and $\|\nabla f\|_2$.
This distinction is minor in the present self-adjoint setting but important
historically.  For
complex, possibly nonsymmetric, bounded coefficients the assertion that
$D(L^{1/2})=W^{1,2}$ is the Kato square-root problem.  Its solution on
$\R^n$ is due to Auscher, Hofmann, Lacey, McIntosh and Tchamitchian
\cite{AHLMT02}; the corresponding $L^2$ theory for strongly Lipschitz
domains was proved by Auscher and Tchamitchian
\cite{AuscherTchamitchian03}.  The reverse Riesz estimate studied here may therefore be viewed as one $L^p$ side of a square-root norm equivalence, but our operators remain real and symmetric.

For $1<p<\infty$, the two estimates are
\begin{align}
 \|\nabla L^{-1/2}u\|_{L^p}&\leq C_p\|u\|_{L^p},
 \tag{$\mathrm R_p$}\label{eq:R-p}\\
 \|L^{1/2}f\|_{L^p}&\leq C_p\|\nabla f\|_{L^p}.
 \tag{$\mathrm{RR}_p$}\label{eq:RR-p}
\end{align}
The first is the Riesz-transform estimate and the second its reverse.  In
the geometric case one replaces $\nabla$ by $d$.  In the divergence-form
case ellipticity is absorbed into the constants.  By duality,
\eqref{eq:R-p} implies \((\mathrm{RR}_{p'})\), but
there is no converse in general.  In particular, the reverse problem below
two cannot be recovered from the universally available forward estimates
at exponents at most two.

The Euclidean Riesz transforms are classical Calder\'on--Zygmund operators.
On manifolds and domains, however, their boundedness reflects heat-kernel
regularity, Poincar\'e inequalities, and the global geometry.  Coulhon and
Duong established the robust low-exponent theory \cite{CoulhonDuong}; the
higher-exponent problem and its relation to reverse H\"older estimates were
developed in \cite{ACDH04,AuscherCoulhon05}.  The reverse
inequality is governed by a different mechanism.  In particular, at low
frequency the forward transform and the square root may respond differently
to the same geometry; see \cite{AuscherCoulhon05,KillipVisanZhang16}.  A
typical obstruction to the forward transform is carried by a leading
harmonic term in its kernel.  After the square root is written in bilinear
form and paired with the gradient, this contribution can instead be removed
by a \emph{harmonic annihilation} argument; see \cite{He26}.

That difference is particularly sharp on spaces with several ends.
Nonconstant harmonic functions connecting distinct ends may create the
leading low-frequency obstruction to the forward transform, whereas the
same contribution disappears in the bilinear argument for the reverse
inequality.  This accounts for the greater stability of the reverse estimate
under compact gluing.  Exterior domains exhibit the boundary counterpart of
this geometry: after separating a neighborhood of the obstacle from the
Euclidean region at infinity, the exterior problem is assembled from a
bounded interior problem and a whole-space problem.  Compact gluing therefore
provides a natural link between analysis on manifolds with ends and
boundary-value problems on Euclidean domains.

Carron, Coulhon and Hassell determined the obstruction created by harmonic
functions connecting Euclidean ends \cite{CarronCoulhonHassell06}; Carron
then proved a general forward stability theorem by a Poisson-semigroup
parametrix \cite{Carron07}.  Exterior domains exhibit a related boundary
version of the phenomenon.  Hassell and Sikora found the dimensional
Dirichlet obstruction in a radial model \cite{HassellSikora09}, and Killip,
Visan and Zhang obtained sharp Sobolev and Riesz-transform results
outside smooth convex obstacles \cite{KillipVisanZhang16}.  For real
symmetric divergence-form operators on exterior Lipschitz domains, Jiang
and Lin obtained reverse-H\"older characterizations and several VMO and
boundary-value consequences \cite{JiangLin24}.  Under additional
coefficient and boundary hypotheses, Jiang and Yang later described the
high-exponent Dirichlet defect space \cite{JiangYang25}.

This paper presents three main results.
\begin{enumerate}[label=\textup{(\roman*)}]
\item The reverse estimate \eqref{eq:RR-p} is stable under a finite
      connected-sum construction below a common Sobolev dimension greater than $2$. A real-interpolation argument also settles the critical case at ``dimension'' $2$.
\item For the exterior Neumann realization, we        establish \eqref{eq:RR-p} for every             $1<p<\infty$, thereby confirming a              conjecture of Auscher and Tchamitchian
      \cite{AuscherTchamitchian01}. The proof uses an explicit,direct Poisson argument compatible with the boundary condition.
\item Armed with a homogeneous Fredholm gluing        theorem that resolves the form error of an
      exterior Poisson parametrix of the Riesz transform on exterior Lipschitz domains, we establish a criterion which transfers the whole-space and bounded-core forward estimates to all exterior-domain ranges in
      Theorem~\ref{thm:intro-forward}.  The two models need only agree with the exterior coefficients on their respective regions of use.
\end{enumerate}

\subsection*{Reverse stability on connected sums}

Let $(M_j,g_j)$, $1\leq j\leq \ell$, be complete Riemannian manifolds of
the same topological dimension.  A complete manifold $M$ is a connected
sum of the $M_j$ if there are compact sets $K\subset M$ and
$K_j\subset M_j$ such that
\[
 M\setminus K=\bigsqcup_{j=1}^{\ell}E_j,
 \qquad E_j\simeq M_j\setminus K_j
\]
isometrically.

\begin{theorem}
\label{thm:intro-connected}
Let $(M_j,g_j)$, $1\leq j\leq\ell$, be complete Riemannian manifolds
whose Ricci curvature is bounded below, and let $\nu\geq2$.
If $\nu>2$, assume that every $M_j$ satisfies
\[
 \|h\|_{L^{2\nu/(\nu-2)}(M_j)}
 \leq C_j\|dh\|_{L^2(M_j)},
 \qquad h\in C_c^\infty(M_j).
\]
If $\nu=2$, assume instead that every $M_j$ satisfies the Nash inequality
\[
 \|h\|_{L^2(M_j)}^4
 \leq C_j\|dh\|_{L^2(M_j)}^2\|h\|_{L^1(M_j)}^2,
 \qquad h\in C_c^\infty(M_j).
\]
Let $M$ be a connected sum of the $M_j$.
If \eqref{eq:RR-p} holds on every $M_j$ for some $1<p<\nu$,
then it holds on $M$.
\end{theorem}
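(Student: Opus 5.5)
We plan to work with the bilinear (duality) form of the reverse inequality. Since $\Delta$ is self-adjoint, \eqref{eq:RR-p} on $M$ is equivalent to
\[
 |\langle \Delta^{1/2}f,g\rangle|\leq C\|df\|_{L^p}\|g\|_{L^{p'}}
\]
for $f,g\in C_c^\infty(M)$. We take a partition of unity $\chi_0+\sum_j\chi_j=1$ on $M$. Here $\chi_j$ is supported in $E_j$ and equals $1$ outside a compact set there, and $\chi_0$ has compact support. The square root is represented as
\[
 \Delta^{1/2}f=c\int_0^\infty \Delta e^{-t\Delta}f\,t^{-1/2}\,dt
 =c\int_0^\infty e^{-t\Delta}\,d^*df\;t^{-1/2}\,dt .
\]
Each $f$ is split as $f=\sum_j \chi_j f+\chi_0 f$. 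The pieces $\chi_jf$ are transplanted to $M_j$, where \eqref{eq:RR-p} is known. There we compare $e^{-t\Delta_M}$ with $e^{-t\Delta_{M_j}}$ through a Duhamel / parametrix formula, $\tilde\chi_j e^{-t\Delta_M}\chi_j-\tilde\chi_j e^{-t\Delta_{M_j}}\chi_j=$ a time integral of heat semigroups applied to commutator terms $[\Delta,\tilde\chi_j]$. These commutators are supported in a fixed compact set $K'$.

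The first step handles the main terms. By \eqref{eq:RR-p} on $M_j$, we get $\|\Delta_{M_j}^{1/2}(\chi_jf)\|_{L^p}\lesssim\|d(\chi_jf)\|_p\leq\|df\|_p+\|f\,d\chi_j\|_p$. The lower-order term $f\,d\chi_j$ is supported in a compact set, so we control it through the Sobolev inequality. The hypotheses (Sobolev, or Nash when $\nu=2$, on each $M_j$, together with Ricci bounded below) pass to $M$ by the standard stability of Sobolev/Nash inequalities under compact perturbation. They yield $\|f\|_{L^{p^*}}\lesssim\|df\|_p$ for $p<\nu$ (with $1/p^*=1/p-1/\nu$) on $M$, modulo locally constant functions. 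Since $p<\nu$, this gives $\|f\|_{L^p(K')}\lesssim\|f\|_{L^{p^*}(K')}\lesssim\|df\|_{L^p}$ on compact sets. This is exactly where $p<\nu$ enters. The compactly supported piece $\chi_0f$ is handled by local theory: Ricci bounded below gives local Riesz and reverse Riesz bounds at scale $1$, and the high-frequency part $\int_0^1$ is local.

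The second and main step is to bound the error terms at large time, namely $\int_1^\infty$ of the commutator terms paired with $g$. Their kernels are $d e^{-s\Delta}$ localized to $K'$ or $\Delta$ acting on $\tilde\chi$ cutoffs. Pairing with $df$ or $f|_{K'}$, we need time-integrable decay. The Sobolev dimension $\nu$ gives on-diagonal heat kernel decay $p_t(x,x)\lesssim t^{-\nu/2}$ for $t\geq1$ (Varopoulos; Nash/Carlen--Kusuoka--Stroock when $\nu=2$). Gradient decay near $K'$ follows by Caccioppoli together with local Li--Yau estimates. Using integration by parts, we move a derivative onto $f$ ("harmonic annihilation": the constant-in-space leading term of $e^{-t\Delta}$ near $K'$ is killed because the pairing involves $df$ or $d^*$). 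The error is then bounded by $\int_1^\infty t^{-1/2}\,t^{-\nu/(2p^*{}')}\cdots\,dt$ times $\|f\|_{L^{p^*}(K')}\|g\|_{p'}$. We expect convergence precisely for $p<\nu$, and at $\nu=2$ the integral is borderline logarithmic. The main obstacle is this endpoint bookkeeping, in particular $\nu=2$. There we plan to prove the restricted weak-type (Lorentz $L^{p,1}\to L^{p,\infty}$) version of the error bound using the Nash decay. We then apply real interpolation between two exponents $p_0<p<p_1<2$ and combine with the local-in-time part, obtaining strong type at $p$.

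Finally, we assemble the pieces. We sum the main terms, the local part, and the controlled errors, then use density of $C_c^\infty$ and the quotient by constants when $f$ is not in $L^p$. This gives $\|\Delta^{1/2}f\|_{L^p(M)}\lesssim\|df\|_{L^p(M)}$ for the given $1<p<\nu$.
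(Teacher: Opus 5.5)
There is a genuine gap where you obtain $\|f\|_{L^{p^*}(M)}\lesssim\|df\|_{L^p(M)}$, and with it the compact Poincar\'e bound $\|f\|_{L^p(K')}\lesssim\|df\|_p$. You attribute it to the $L^2$ Sobolev/Nash hypotheses, the Ricci bound and compact-perturbation stability, but these do not give it for $1<p<2$. Applying an $L^2$ Sobolev inequality of dimension $\nu$ to powers $|f|^\gamma$ propagates it only to the $L^q$ inequalities with $2\le q<\nu$; the $L^p$ inequalities with $p<2$ are genuinely stronger. When $\nu=2$, every exponent in question has $p<2$. You use this inequality for the lower-order terms $\|f\,d\chi_j\|_p$, for the core piece, and for the $L^{p^*}$ control of $f$ in the large-time error, so the argument does not stand as written.

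The missing idea is to extract the $L^p$ Sobolev inequality from the model hypothesis $(\mathrm{RR}_p)$ itself. The heat-kernel bound $t^{-\nu/2}$ gives Varopoulos' fractional integration $\|\Delta_j^{-1/2}g\|_{L^{p^*}(M_j)}\lesssim\|g\|_{L^p(M_j)}$, hence $\|h\|_{L^{p^*}(M_j)}\lesssim\|\Delta_j^{1/2}h\|_p\lesssim\|dh\|_p$ on each model. One then transfers this to $M$ using the local Sobolev inequality on the core and the compact Poincar\'e inequality on $M$, which comes from the stability of $p$-hyperbolicity under compact gluing (Devyver). This is what the paper does.

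Your $\nu=2$ argument also does not close. Interpolating restricted weak-type bounds between $p_0<p<p_1$ needs the error estimate at $p_0<p$, hence an $L^{p_0}$ Sobolev inequality, i.e.\ model information at an exponent that is not assumed. With inputs measured by $\|df\|$ you would also be interpolating homogeneous Sobolev spaces on $M$, which is not automatic. Even if you interpolate in $f$ itself, strong-type $L^p$ output forces $f\in L^{p^*,p}$, not merely $L^{p^*}$.

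The ingredient you are missing is exactly the Lorentz refinement at the fixed exponent, $\|f\|_{L^{p^*,p}(M)}\lesssim\|df\|_{L^p(M)}$ (Bakry--Coulhon--Ledoux--Saloff-Coste). The paper combines it with real interpolation in the time variable:
the $L^1\to L^\infty$ bound $s^{-2}$ for $e^{-s\sqrt\Delta}$ gives $s^{2/p^*}\|e^{-s\sqrt\Delta}f\|_\infty\in L^p(ds/s)$ on $(1,\infty)$;
the endpoint bounds for $E_{j,s}$ give $s^{2/p'}\|E_{j,s}u\|_1\in L^{p'}(ds/s)$;
and H\"older in $ds/s$ closes the estimate because $2/p^*+2/p'=1$.

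Your bookkeeping is also off. The large-time error decays like $s^{-\nu/p^*-\nu/p'}=s^{1-\nu}$ in Poisson time ($t^{-\nu/2}$ in heat time) for every admissible $p$. So convergence is governed by $\nu>2$, not by $p<\nu$, and no harmonic-annihilation step is needed.

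Finally, if you keep a heat-semigroup Duhamel parametrix on the $f$ side, you must recombine the commutator term $\int_0^\infty[\Delta,\tilde\chi_j]e^{-t\Delta_{M_j}}(\chi_jf)\,t^{-1/2}\,dt$ with the Duhamel term before estimating. Bounded alone in $L^p$, its integrand is only $t^{-1/2-\nu/(2p^*)}$, which is integrable just for $p<\nu/2$. The paper avoids this by building the parametrix on the dual function $u$ and pairing it with $e^{-s\sqrt\Delta}f$ through a Wronskian identity, so that both factors decay.
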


The Nash inequality is a natural replacement for the homogeneous
$L^2$ Sobolev inequality when $\nu\leq2$.
For $\nu>2$, its dimensional form
\[
 \|h\|_2^{2+4/\nu}
 \leq C\|dh\|_2^2\|h\|_1^{4/\nu}
\]
is equivalent to the homogeneous $L^2$ Sobolev inequality with exponent
$2\nu/(\nu-2)$; see \cite{BCLS95}.

% \begin{theorem}
% \label{thm:intro-connected}
% Let $(M_j,g_j)$, $1\leq j\leq\ell$, be complete Riemannian manifolds whose
% Ricci curvature is bounded below.  Assume that there is a common
% $\nu>2$ such that, for every $j$,
% \[
%  \|h\|_{L^{2\nu/(\nu-2)}(M_j)}
%  \leq C_j\|dh\|_{L^2(M_j)},
%  \qquad h\in C_c^\infty(M_j).
% \]
% Let $M$ be a connected sum of the $M_j$.  If \eqref{eq:RR-p} holds on
% every $M_j$ for some $1<p<\nu$, then it holds on $M$.
% \end{theorem}

Theorem~\ref{thm:intro-connected} is an abstract model-to-sum implication.
The closest forward stability result can be stated, in the present
notation, as follows.

\begin{theorem}\cite[Theorem~1.3]{Carron07}
Let $M_0$ have Ricci curvature bounded below and satisfy the $L^2$ Sobolev
inequality with dimension $\nu>3$.
Fix $p$ with $\nu/(\nu-1)<p<\nu$.  If \eqref{eq:R-p} holds on $M_0$,
then it also holds on every complete manifold
isometric at infinity to a finite disjoint union of copies of $M_0$.
\end{theorem}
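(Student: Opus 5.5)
The approach is a Poisson-semigroup (or resolvent) parametrix that glues the known Riesz transform on $M_0$ to an interior model, and then treats the gluing error as an operator with compactly supported ``source''. Its mapping properties are governed by the decay of the Green function, which the Sobolev inequality of dimension $\nu$ controls. Let $M\setminus K=\bigsqcup_{j=1}^{\ell}E_j$ with $E_j\simeq M_0\setminus K_0$. Choose cut-offs $\chi_j$ supported in $E_j$, equal to $1$ near infinity, and set $\chi_0=1-\sum_j\chi_j$ (compactly supported). Also choose enlarged cut-offs $\tilde\chi_j$ with $\tilde\chi_j\chi_j=\chi_j$. I would start from
\[
 \Delta^{-1/2}=\frac{2}{\pi}\int_0^\infty(\Delta+\lambda^2)^{-1}\dd\lambda ,
\]
and build an approximate resolvent
\[
 E(\lambda)=\sum_j\tilde\chi_j(\Delta_{0}+\lambda^2)^{-1}\chi_j+\tilde\chi_0 R_{\rm int}(\lambda)\chi_0 .
\]
Here $R_{\rm int}$ is the resolvent of a compact (closed or Dirichlet) model containing $K$. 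The Ricci lower bound gives uniform local geometry, so $\Delta+\lambda^2$ applied to $E(\lambda)$ equals $I+F(\lambda)$. The error $F(\lambda)$ consists of commutator terms $[\Delta,\tilde\chi]$, whose kernels are supported where $d\tilde\chi\neq0$, a compact set $Z$.

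Given this, $d\Delta^{-1/2}$ splits into a main part and a correction. The main part is $\sum_j\tilde\chi_j\, d\Delta_0^{-1/2}\chi_j$, plus terms with $d\tilde\chi_j$ and the interior piece. It is bounded on $L^p$ by the hypothesis \eqref{eq:R-p} on $M_0$ and by local elliptic/Calderón--Zygmund theory on the compact part. The $d\tilde\chi$-terms need $\Delta_0^{-1/2}$ to be bounded from $L^p$ to $L^p_{\rm loc}$. This follows from the heat kernel bound $p_t\lesssim t^{-\nu/2}$ (Varopoulos, from Sobolev), since $\int^\infty t^{-1/2}t^{-\nu/2p}\dd t<\infty$ whenever $p<\nu$.

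The correction is obtained by the exact identity $(\Delta+\lambda^2)^{-1}=E(\lambda)-(\Delta+\lambda^2)^{-1}F(\lambda)$. After integration in $\lambda$ it produces operators of the form $d\,G\,\mathbb 1_Z\,T$, with $T$ bounded from $L^p$ into $L^2(Z)$ or $L^\infty(Z)$, and $G$ a Green-type operator. Since $Z$ is compact, this reduces to two checks. The first is an $L^p$ bound for $d_x G(x,y)$ with $y\in Z$. Sobolev of dimension $\nu>3$ together with the Ricci bound (Li--Yau gradient estimates at scale $r$) gives $|d_xG(x,y)|\lesssim r^{1-\nu}$ on each Euclidean-like end, and this lies in $L^p$ at infinity exactly when $p>\nu/(\nu-1)$. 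The second is the dual statement for the incoming factor $T$, which requires $p'>\nu/(\nu-1)$, i.e.\ $p<\nu$. This is where both endpoints of the range arise.

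The main obstacle is controlling the low-frequency part ($\lambda\to0$) of $(\Delta+\lambda^2)^{-1}F(\lambda)$ uniformly. We need $I+F$ to be invertible in a way that does not destroy the decay. At $\lambda=0$ the error produces exactly the harmonic functions connecting the ends, and these must not enter with a non-decaying term. I would first try a Fredholm argument on weighted spaces over $Z$: $F(0)$ is compact there, and injectivity of $I+F(0)$ follows from $\nu>2$ (non-parabolicity, so no bounded harmonic function is constant-up-to-decay error at the gluing). Then the harmonic functions $h$ with $h\to c_j$ on $E_j$ satisfy $|dh|\lesssim r^{1-\nu}$, which lies in $L^p$ for $p>\nu/(\nu-1)$. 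This is the reason the range stops at $p<\nu$ and does not reach the full range on $M_0$. The assumption $\nu>3$ is used to make the $\lambda$-integrals of the mixed terms converge absolutely.
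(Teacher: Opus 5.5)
\textbf{Verdict: the overall scheme is sound, but the step that produces the lower endpoint $p>\nu/(\nu-1)$ is unjustified and needs a different argument.}

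Your architecture is a resolvent version of Carron's Poisson-semigroup parametrix, which is the route the paper has in mind. You glue the model resolvents, push the error onto a compact set $Z$, bound the main term by \eqref{eq:R-p} on $M_0$, and bound the correction through the Green operator of $M$ acting on data supported in $Z$. Your upper endpoint is also sound: the requirement $\Delta_0^{-1/2}:L^p\to L^p_{\mathrm{loc}}$, i.e.\ $p<\nu$, is the resolvent counterpart of the $s$-integrability of the error bound in Lemma~\ref{lem:connected-error-bound}.

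The gap is in the lower endpoint. You get $dG(\cdot,y)\in L^p$ at infinity from a pointwise bound $|d_xG(x,y)|\lesssim r^{1-\nu}$, justified by ``Li--Yau at scale $r$'' on ``Euclidean-like'' ends. Neither ingredient is available.
\begin{itemize}
\item The ends are copies of an arbitrary $M_0$, not Euclidean-like.
\item Under $\mathrm{Ric}\geq -K$, the Cheng--Yau estimate is $|\nabla\log h|\leq C(R^{-1}+\sqrt K)$, which gives nothing beyond unit scale. Hyperbolic space has Ricci bounded below and the Sobolev inequality of dimension $n$, yet it carries bounded harmonic functions whose gradients are bounded below along a whole geodesic hyperplane.
\end{itemize}
From the hypotheses you only get $|d_xG(x,y)|\lesssim\sup_{B(x,1)}G(\cdot,y)\lesssim d(x,y)^{2-\nu}$. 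Even with volume growth $r^\nu$, this puts $dG(\cdot,y)$ in $L^p$ at infinity only for $p>\nu/(\nu-2)$. The same objection applies to your pointwise bound $|dh|\lesssim r^{1-\nu}$ for the harmonic functions connecting the ends.

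The missing idea is a second use of \eqref{eq:R-p} on $M_0$, this time for the correction.
\begin{itemize}
\item For $f\in L^\infty$ supported in $Z$, let $w=\Delta^{-1}f$ be the energy solution.
\item On each end, $\chi_jw$ is a finite-energy function on $M_0$ with $\Delta_0(\chi_jw)=g_j:=\chi_jf+[\Delta,\chi_j]w$. This $g_j$ is compactly supported, and bounded by local elliptic regularity.
\item Uniqueness in the energy class gives $\chi_jw=\Delta_0^{-1}g_j$, hence
\[
 d(\chi_jw)=\bigl(d\Delta_0^{-1/2}\bigr)\bigl(\Delta_0^{-1/2}g_j\bigr).
\]
\item Fractional integration $\Delta_0^{-1/2}:L^r\to L^p$ with $1/r-1/p=1/\nu$ needs $r>1$, which is exactly $p>\nu/(\nu-1)$. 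Then \eqref{eq:R-p} on $M_0$ puts $d(\chi_jw)$ in $L^p$, with no pointwise gradient bound and no volume-growth assumption.
\end{itemize}
This is where the lower endpoint genuinely comes from. Since $\Delta_0(\Delta_0+\lambda^2)^{-1}=I-\lambda^2(\Delta_0+\lambda^2)^{-1}$ is uniformly bounded on $L^p$, the same argument gives the bound for $d(\Delta+\lambda^2)^{-1}\mathbf 1_Z$ uniformly in $\lambda$, which your formulation requires.

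Your identity already uses the true resolvent of $M$, so the Fredholm discussion of $I+F(0)$ is not needed. What is needed is precisely this Green-operator estimate, and the proposal does not supply it. The role you assign to $\nu>3$ is also left unsubstantiated.
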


A related compact-perturbation theorem
for one-ended manifolds is due to Devyver \cite{Devyver15}.  Carron's theorem
assumes $\nu>3$ and treats the interval $\nu/(\nu-1)<p<\nu$.  By contrast,
Theorem~\ref{thm:intro-connected} requires only $\nu\ge 2$, with the Nash assumption when $\nu=2$, and transfers the
reverse estimate throughout $1<p<\nu$.  This illustrates that the reverse
inequality is more stable under compact gluing than the forward transform.
The phenomenon is consistent with \cite{He26}, where the reverse inequality
is proved for every $1<p<\infty$ on finite connected sums of product ends
$\R^{n_j}\times\mathcal M_j$, including ends with different volume-growth
dimensions.  In the same setting, the forward results presently available
in \cite{CarronCoulhonHassell06,HassellSikora09} generally occupy only a finite
interval of exponents.

Theorem~\ref{thm:intro-connected} applies beyond the product class.
For example, Corollary~\ref{cor:nilpotent-ends} gives the reverse inequality
for all $1<p<\infty$ on $\R^3\#\mathrm{Heis}_3$, where
$\mathrm{Heis}_3$ carries a left-invariant Riemannian metric.  The two
models have volume growth of degrees three and four at infinity.  The
Heisenberg end is not a Euclidean--compact product end, and the connected
sum is not globally doubling. 

The familiar Euclidean example is best read in this comparative sense.

\begin{corollary}\label{cor:intro-euclidean-sum}
Let $M=\R^n\#\cdots\#\R^n$, $n\geq2$, with an arbitrary smooth compact
gluing.  Then \eqref{eq:RR-p} holds on $M$ for every $1<p<\infty$.
\end{corollary}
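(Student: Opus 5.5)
We deduce Corollary~\ref{cor:intro-euclidean-sum} from Theorem~\ref{thm:intro-connected} applied with every model equal to $M_j=\R^n$. The Euclidean model satisfies every hypothesis directly. It is flat, so its Ricci curvature vanishes and is in particular bounded below. On $\R^n$ the estimate \eqref{eq:RR-p} holds for every $1<p<\infty$: by Plancherel, $\Delta^{1/2}f=\sum_k R_k\partial_k f$ with the classical Riesz transforms $R_k$, and these are bounded on $L^p$ by Calder\'on--Zygmund theory. Since $M$ is isometric at infinity to $\ell$ copies of $\R^n$ minus compact sets, it is a connected sum of the $M_j=\R^n$ in the sense defined above. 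The only issue is therefore the restriction $p<\nu$, where $\nu$ is a common Sobolev dimension of the models.

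We handle this by splitting into two cases.

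\emph{The case $n\ge3$.} Take $\nu=n$. The Euclidean Sobolev inequality $\|h\|_{2n/(n-2)}\le C\|\nabla h\|_2$ gives the hypothesis of Theorem~\ref{thm:intro-connected}, and the theorem yields \eqref{eq:RR-p} on $M$ for $1<p<n$. The range $p\ge n$ is not reached this way, and I expect this to be the main obstacle. Since a Sobolev inequality of dimension $\nu>n$ fails on $\R^n$, it cannot be handled simply by enlarging $\nu$. My first attempt would be to use the forward theory and duality.

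\begin{enumerate}[label=\textup{(\alph*)}]
\item By \cite{CarronCoulhonHassell06}, the Riesz transform on a connected sum of Euclidean ends ($n\ge3$) is bounded on $L^q$ for $1<q<n$.
\item The text notes that \eqref{eq:R-p} at $q$ implies $(\mathrm{RR}_{q'})$. Taking $1<q<n$ gives \eqref{eq:RR-p} on $M$ for every $p>n/(n-1)$.
\end{enumerate}

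Since $n/(n-1)<n$ whenever $n\ge3$, the intervals $(1,n)$ and $(n/(n-1),\infty)$ cover $(1,\infty)$. Alternatively, one can cite \cite{He26}, which treats finite connected sums of product ends $\R^{n_j}\times\mathcal M_j$ with $\mathcal M_j$ a point; that result gives all $1<p<\infty$ directly.

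\emph{The case $n=2$.} Here the Nash inequality $\|h\|_2^4\le C\|\nabla h\|_2^2\|h\|_1^2$ holds on $\R^2$, so Theorem~\ref{thm:intro-connected} with $\nu=2$ gives only the range $1<p<2$. For $p\ge2$ I again appeal to the forward theory. In dimension two, \cite{CarronCoulhonHassell06} shows that the Riesz transform on a sum of Euclidean planes is bounded for $1<q\le2$, and at $q=2$ the bound is automatic. By duality, \eqref{eq:RR-p} then holds for $p\ge2$, and together with the range $1<p<2$ this gives all $1<p<\infty$. If that citation is unavailable, I would fall back on \cite{He26}, whose product-end class includes $\R^2$ ends.

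In summary, the step to verify carefully is the precise forward range from \cite{CarronCoulhonHassell06} in each dimension, in particular that it covers exponents up to $n$. The rest of the argument is routine checking of the hypotheses of Theorem~\ref{thm:intro-connected}.
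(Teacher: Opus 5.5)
Your proposal is correct and follows the paper's own argument. Theorem~\ref{thm:intro-connected} with $\nu=n$ (the Sobolev hypothesis for $n\ge3$, the Nash hypothesis for $n=2$) covers $1<p<n$; duality from the low-exponent forward Riesz bound on Euclidean connected sums \cite{CarronCoulhonHassell06} covers $p\ge n$, and \cite{He26} is an alternative. On the $n=2$ citation you flagged, the forward range $1<q\le2$ already follows from the Coulhon--Duong theorem \cite{CoulhonDuong}, since the glued surface is doubling with Gaussian heat-kernel upper bounds, so that step is not an obstacle.
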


Below $n$ this follows from Theorem~\ref{thm:intro-connected}; at and above
$n$ it follows by duality from the known low-exponent forward transform; see \cite{CarronCoulhonHassell06}. The conclusion also follows from the product-end theorem \cite{He26}; the corollary records the abstract transfer theorem in its simplest
model and is not asserted as an independent novelty.

\subsection*{Exterior Lipschitz domains}

Let $\Omega\subset\R^n$, $n\geq2$, be connected and Lipschitz, with
nonempty compact complement.  Let
\[
 L=-\diver(A\nabla)
\]
on $\R^n$, where $A$ is real, symmetric, bounded and uniformly elliptic,
and denote its Dirichlet and Neumann realizations on $\Omega$ by
$L_D$ and $L_N$.

It is well-known that the Riesz-transform estimates also provide a direct connection with
regularity for elliptic boundary-value problems. For $B\in\{D,N\}$,
boundedness of $\nabla L_B^{-1/2}$ on both $L^p(\Omega)$ and
$L^{p'}(\Omega)$ yields, by factorization and duality, the estimate
\[
 \|\nabla u\|_{L^p(\Omega)}
 \leq C_p\|F\|_{L^p(\Omega)}
\]
for the variational energy solution of boundary problems:
\begin{equation*}
    \begin{cases}
        L_D u = - \diver F & \textrm{in} \quad \Omega,\\
        u = 0 & \textrm{on}\quad \partial \Omega.
    \end{cases} \qquad 
    \begin{cases}
        L_N u = - \diver F & \textrm{in} \quad \Omega,\\
        \mathbf n \cdot A \nabla u = \mathbf n \cdot F & \textrm{on}\quad \partial \Omega,
    \end{cases}
\end{equation*}
where $\mathbf n$ is the outward unit normal and the Neumaan condition
is understood weakly; Neumann solutions are considered modulo constants; see \cite{Byun05, Geng12} and references therein. From this viewpoint, our gluing construction separates the
contribution of the boundary, encoded by the bounded-core problem,
from the behavior of the operator at infinity.

Our next result, inspired by the method used to prove Theorem~\ref{thm:intro-connected}, confirms a conjecture of Auscher and Tchamitchian \cite[Remark 12]{AuscherTchamitchian01} for the Neumann operator.

\begin{theorem}
\label{thm:intro-neumann-reverse}
For every $1<p<\infty$,
\[
 \|L_N^{1/2}f\|_{L^p(\Omega)}
 \leq C_p\|\nabla f\|_{L^p(\Omega)},
 \qquad f\in C_c^\infty(\overline\Omega).
\]
Equivalently, $L_N^{1/2}$ extends continuously from the completion of
$C_c^\infty(\overline\Omega)$ in the gradient norm, modulo constants, to
$L^p(\Omega)$.
\end{theorem}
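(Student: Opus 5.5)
We would prove the estimate by a compact gluing argument that mirrors the connected-sum proof: split $f$ into a piece near the obstacle and a piece at infinity, and treat the square root through a Poisson-semigroup representation tested against $L^{p'}$ functions. Fix a ball $B_0$ containing $\R^n\setminus\Omega$ and cutoffs $\chi_0+\chi_\infty=1$, with $\chi_0\in C_c^\infty(2B_0)$ and $\nabla\chi_\infty$ supported in the annulus $2B_0\setminus B_0$. Let $\Omega_0=\Omega\cap 4B_0$, a bounded Lipschitz domain. The two model operators are the Neumann realization $L_{N,0}$ of $L$ on $\Omega_0$ (Neumann on both boundary pieces) and the whole-space operator $\tilde L=-\diver(\tilde A\nabla)$. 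Here $\tilde A$ equals $A$ outside $B_0$ and is extended inside $B_0$ by the identity; this is still real, symmetric, bounded and uniformly elliptic.

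The key input is the reverse inequality for all $1<p<\infty$ on each model. On $\R^n$ this follows from the known results for real symmetric coefficients: for $p\le 2$ from the Auscher (Hofmann--Lacey--McIntosh--Tchamitchian) theory, and for $p>2$ by duality from the forward Riesz transform for $p'<2$. On the bounded Lipschitz domain with Neumann conditions, the Auscher--Tchamitchian theory gives it for $p$ near and below $2$. The remaining range should follow by duality from the forward estimate for $p'<2$ together with Gaussian bounds. Near the boundary we will work modulo constants via the Poincaré inequality on $\Omega_0$.

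We then write $L_N^{1/2}f=c\int_0^\infty L_N e^{-t\sqrt{L_N}}f\,dt$ or, better, use the bilinear form
\[
\langle L_N^{1/2}f,g\rangle=c\int_0^\infty\!\!\int_\Omega A\nabla f\cdot\nabla e^{-t\sqrt{L_N}}g\,dx\,dt,
\qquad g\in L^{p'}.
\]
Replace $f$ by $f-c_0$ with $c_0$ the mean over the annulus, so that $f-c_0=\chi_0(f-c_0)+\chi_\infty(f-c_0)$. The Poincaré inequality on the annulus controls $\|\nabla(\chi_\bullet(f-c_0))\|_p\lesssim\|\nabla f\|_p$; here $\nabla f$ and $f-c_0$ are both taken on $\Omega_0$. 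For each piece we compare $e^{-t\sqrt{L_N}}g$ with the model Poisson semigroup applied to a cut-off copy of $g$. This is a parametrix $P_tg=\tilde\chi_0e^{-t\sqrt{L_{N,0}}}(\tilde\chi_0 g)+\tilde\chi_\infty e^{-t\sqrt{\tilde L}}(\tilde\chi_\infty g)$, with slightly larger cutoffs. The main terms are bounded by the model reverse inequalities. Since the Neumann condition is built into $L_{N,0}$ and the outer piece never sees $\partial\Omega$, the parametrix is compatible with the boundary condition.

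The main obstacle is the error $e^{-t\sqrt{L_N}}-P_t$. Its generator defect is supported on the compact annulus and involves $\nabla\tilde\chi$ times $e^{-s\sqrt{\cdot}}g$ or its gradient. Via Duhamel this produces terms localized to a fixed compact set, which are smoothing at small $t$. The difficulty is the large-$t$ decay, needed so that the $dt$-integral converges with only $\|\nabla f\|_p$ control and no $L^p$ control of $f$. Here harmonic annihilation enters: the leading large-$t$ part of the error is an $L_N$-harmonic profile times a coefficient in $g$. When paired against $A\nabla(\chi(f-c_0))$ and integrated by parts using the Neumann condition, it vanishes. The residual decays like $t^{-n}$ (or $t^{-1-\epsilon}$ after subtracting constants). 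We would establish this with Gaussian/Poisson kernel bounds for $L_N$ on exterior Lipschitz domains (Neumann heat kernels have Gaussian upper bounds with Hölder regularity by De Giorgi--Nash) and local $L^2\to L^\infty$ estimates on the compact annulus. A Sobolev embedding on the compact annulus then handles every $p$, with no restriction $p<n$, because constants are killed in the Neumann case. For $n=2$ we would check the logarithmic case separately using the Nash-type argument, as in Theorem~\ref{thm:intro-connected}.
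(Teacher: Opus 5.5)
Your architecture (a Poisson parametrix gluing a Neumann core to a whole-space end, model reverse inequalities for the main terms, and subtraction of the local constant mode when $n=2$) is the paper's. The gap is the step that carries the proof: controlling the large-time gluing error by $\|\nabla f\|_p$ alone. Your proposed mechanism cannot do this. However the error is rearranged, its large-time part is the compactly supported source tested against $e^{-s\sqrt{L_N}}f$ (or $(I-e^{-s\sqrt{L_N}})f$) on a fixed set $K$. The $u$-side alone decays only like $s^{-n/p'}$, or $s^{-n/p'-\varepsilon}$ after centering, and this is not integrable for $p$ close to $1$. The missing decay must therefore come from $f$.

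On $K$, the function $e^{-s\sqrt{L_N}}f$ averages $f$ at scale $s$. A Poincar\'e or Sobolev inequality on the compact annulus, with or without subtracting $c_0$, gives no decay in $s$. For $p>n$ the oscillation of $f$ at scale $s$ can even grow like $s^{1-n/p}$. So the claim that every $p$ is handled ``with no restriction $p<n$'' is unfounded.

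It is also unnecessary. The paper first reduces to $1<p<2$ by duality with the Gaussian low-exponent bound \eqref{eq:exterior-low-Riesz} on $\Omega$ itself; you use this duality for the models but never for $\Omega$. Then $p<2\le n$, and the global inequality \eqref{eq:exterior-Sobolev} with \eqref{eq:domain-local} gives $\|e^{-s\sqrt{L_N}}f\|_{W^{1,2}(K\cap\Omega)}\lesssim s^{-n/p^*}\|\nabla f\|_p$. Against the centered end error \eqref{eq:centered-W12}, the total exponent is $n/p^*+n/p'+\varepsilon=n-1+\varepsilon>1$. Your planar fallback, the Lorentz argument of Theorem~\ref{thm:intro-connected}, likewise rests on a global inequality, $\|f\|_{L^{p^*,p}}\lesssim\|\nabla f\|_p$.

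There is also a structural problem. In your representation the semigroup acts on $g$, and the comparison error $e^{-t\sqrt{L_N}}g-P_tg$ is not localized; only its source is. Pairing its gradient with $\nabla f\in L^p(\Omega)$ would need $L^{p'}$ gradient bounds, $p'>2$, for $L_N$-Poisson extensions. That is essentially the forward Riesz transform at high exponent, which fails for bounded measurable $A$ (Meyers gives only $2+\delta$). The paper avoids this with the Wronskian $W(s)=\ip{X_s}{\partial_sY_s}-\ip{\partial_sX_s}{Y_s}$, where $X_s=e^{-s\sqrt{L_N}}f$ and $Y_s=\mathcal S_{N,s}u$. Its derivative $-\ip{X_s}{F_{N,s}u}$ moves the semigroup onto $f$ and tests the form-valued error only in $W^{1,2}(K\cap\Omega)$.

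For the same reason, ``smoothing at small $t$'' does not settle short times. For the part of $f$ near the gluing region, absolute bounds again need high-exponent gradients. The paper instead uses truncated Wronskian identities whose endpoint terms are controlled by the inhomogeneous estimates \eqref{eq:localized-square-root} and \eqref{eq:core-reverse}, plus compact Poincar\'e.

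Finally, your unshifted core $L_{N,0}$ keeps its constant mode. The cut-off core solution therefore tends to a constant times $\tilde\chi_0$ rather than to $0$, leaving a non-decaying error you would have to treat separately. The paper shifts to $H_{c,N}=1+L_{c,N}$ and pays only an exponentially decaying zeroth-order term.
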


The corresponding Dirichlet reverse inequality was obtained by Jiang--Lin \cite[Theorem~1.3]{JiangLin24} and is not reproved here. We mention that the above Neumann conclusion was also recorded by Jiang and Yang without proof; see \cite[Remark~1.3]{JiangYang25}.  Theorem
\ref{thm:intro-neumann-reverse} supplies an independent direct proof at the
level of the Neumann form.  The argument glues the whole-space and bounded
interior problems through a smooth core and transfers the model information
to the exterior domain; the same principle will also be used for the forward
problem later.  The same boundary condition is imposed on the physical and
artificial components of the core boundary.  A central feature of the
construction is the explicit removal of the constant Neumann mode.  

For the forward problem, choose a bounded Lipschitz core $\Omega_c$ whose
boundary is the union of the physical boundary and an artificial sphere,
and impose the same boundary condition $B\in\{D,N\}$ on both pieces.  Let
$L_{c,B}$ be this pure core realization and set
\begin{align*}
 p_L&=\sup\bigl\{r>2:\nabla L^{-1/2}\text{ is bounded on every }
                     L^q(\R^n),\ 2\leq q<r\bigr\},\\
 p_{c,B}&=\sup\bigl\{r>2:\nabla(1+L_{c,B})^{-1/2}\text{ is bounded on every }
                     L^q(\Omega_c),\ 2\leq q<r\bigr\},\\
 P_B&=\min\{p_L,p_{c,B}\}.
\end{align*}

\begin{theorem}
\label{thm:intro-forward}
With $P_B$ as above, the following hold.
\begin{enumerate}[label=\textup{(\roman*)}]
\item If $n\geq2$, $\nabla L_N^{-1/2}$ is bounded on $L^p(\Omega)$ for
      every $1<p<P_N$.
\item If $n\geq3$, $\nabla L_D^{-1/2}$ is bounded on $L^p(\Omega)$ for
      every $1<p<\min\{n,P_D\}$.
\end{enumerate}
More generally, let $0<R_-<R_c<R_+$, with
$\R^n\setminus\Omega\subset B(0,R_-)$, and suppose that
\begin{equation}\label{eq:overlapping-models}
 A=A_0\quad\text{on }\Omega\cap B(0,R_+),
 \qquad
 A=A_\infty\quad\text{on }\Omega\setminus\overline{B(0,R_-)}.
\end{equation}
Here $A_0$ on $B(0,R_+)$ and $A_\infty$ on $\R^n$ are real,
symmetric, bounded and uniformly elliptic.  Let $L_\infty=-\diver(A_\infty\nabla)$ on $\R^n$, and
let $L_{0,c,B}$ be the pure $B$ realization of $-\diver(A_0\nabla)$ on
$\Omega_c=\Omega\cap B(0,R_c)$.  Define
\begin{align*}
 p_\infty&=\sup\bigl\{r>2:\nabla L_\infty^{-1/2}\text{ is bounded on every }
                  L^q(\R^n),\ 2\leq q<r\bigr\},\\
 p_{0,B}&=\sup\bigl\{r>2:\nabla(1+L_{0,c,B})^{-1/2}\text{ is bounded on every }
                  L^q(\Omega_c),\ 2\leq q<r\bigr\},\\
 \widehat P_B&=\min\{p_\infty,p_{0,B}\}.
\end{align*}
Then \textup{(i)} and \textup{(ii)} hold with $P_B$ replaced by
$\widehat P_B$.
\end{theorem}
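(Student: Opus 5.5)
The range $1<p\le 2$ is standard and I would dispose of it first. Gaussian upper bounds for the Dirichlet and Neumann heat kernels on exterior Lipschitz domains, together with the Coulhon--Duong theory \cite{CoulhonDuong}, give weak type $(1,1)$ for $\nabla L_B^{-1/2}$. For Neumann the Gaussian bounds come from a bounded extension operator, and the doubling measure space is $\Omega$. Interpolating with the $L^2$ identity covers $1<p\le2$. It therefore suffices to treat $2<p<\widehat P_N$ for $B=N$, and $2<p<\min\{n,\widehat P_D\}$ for $B=D$. The special case with $P_B$ follows from the general one by taking $A_0=A|_{B(0,R_+)}$ and letting $A_\infty$ be an elliptic extension of $A$ from $\Omega\setminus B(0,R_-)$, for instance $A$ outside $B(0,R_-)$ and the identity inside. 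This has one caveat: $p_\infty$ must then be compared with $p_L$. I would handle this by applying the general statement directly with $A_\infty:=A$ extended by $A$ on $\R^n$, which is given. The interior filler is irrelevant because $A$ itself is defined on all of $\R^n$.

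For $p>2$ I would use a Poisson parametrix in the spirit of Carron \cite{Carron07}, writing $L_B^{-1/2}=c\int_0^\infty e^{-t\sqrt{L_B}}\,dt$ formally. Fix cutoffs $\chi_0+\chi_\infty=1$ with $\chi_0$ supported in $B(0,R_c)$ and equal to $1$ near $B(0,R_-)$, and $\tilde\chi_0,\tilde\chi_\infty$ that equal $1$ on the respective supports and live where $A=A_0$, resp.\ $A=A_\infty$. Define the candidate
\[
 G=\tilde\chi_\infty L_\infty^{-1/2}\chi_\infty+\tilde\chi_0(1+L_{0,c,B})^{-1/2}\chi_0
\]
in square-root form, or more usefully at the level of the Poisson semigroups. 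By hypothesis $\nabla G$ is bounded on $L^p$ for $p<\widehat P_B$. The commutator terms $[\nabla,\tilde\chi]$ are handled by local Sobolev embedding on the compact region combined with $L^2\to L^p_{\rm loc}$ bounds. The error $L_B^{-1/2}-G$ is governed by a form-valued remainder $E$. It is the sum of terms involving $\nabla\tilde\chi\cdot A\nabla(\cdot)$ and $\diver(A\nabla\tilde\chi\,\cdot)$ applied to the model operators, and it is compactly supported in the overlap annulus $R_-<|x|<R_c$, away from both boundaries. One also has to reconcile the inhomogeneous core model with the homogeneous exterior operator; the difference $(1+L)^{-1/2}$ versus $L^{-1/2}$ is a smoothing, compactly localized correction.

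The key step, and the main obstacle, is inverting $I+E$ on a homogeneous space, the \emph{homogeneous Fredholm construction} mentioned in the introduction. I would show that $E$ maps $L^p$ into a space of compactly supported smooth objects and is compact on the homogeneous energy space $\dot W^{1,2}$ and on its $L^p$ analogue. For the exterior resolvent at low frequency I would use the local smoothing plus the decay $|\nabla_x\Gamma(x,y)|\lesssim |x-y|^{1-n}$ of the whole-space Green function off the annulus. For the correction I would write $L_B^{-1/2}=G+L_B^{-1/2}\,R$, where $R$ has compactly supported range in $L^{q}$ for all $q$, and then bound $\nabla L_B^{-1/2}$ on compactly supported data. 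Via $L_B^{-1/2}=L_B^{1/2}L_B^{-1}$ this reduces to bounding $\nabla L_B^{-1}\diver$ and $\nabla L_B^{-1}$ applied to compactly supported $L^2$ functions, in $L^p$ at infinity. For Neumann, $\nabla$ of the Green function decays like $|x|^{-n}$ after subtracting the constant mode, since the Neumann data have mean zero. That decay is in $L^p$ for all $p>1$, so no extra restriction arises. For Dirichlet, $L_D^{-1}$ applied to compactly supported data has a nonzero "harmonic measure at infinity" term: the leading part behaves like $\Gamma(x)\cdot$capacity and its gradient like $|x|^{1-n}$. This lies in $L^p$ near infinity only for $p>n/(n-1)$, and after duality that piece becomes the $p<n$ restriction. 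Injectivity for the Fredholm alternative comes from uniqueness of energy solutions: there are no nonzero $\dot W^{1,2}$ harmonic functions with the boundary condition, modulo constants for $N$. I also expect this step to need $n\ge3$ for $D$, because $\dot W^{1,2}$ behaves well there. Combining the parametrix bound, the compact error bound, and the $p\le2$ range gives the theorem.
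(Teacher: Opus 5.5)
Your treatment of $1<p\le2$ and your reduction of the $P_B$ statement to the overlapping-model one (take $A_0=A|_{B(0,R_+)}$, $A_\infty=A$) are fine. The step that produces the correction, however, does not work as stated. Because $L_B^{\pm1/2}$ is nonlocal, a parametrix written at the square-root level has no compactly supported remainder. So there is no identity $L_B^{-1/2}=G+L_B^{-1/2}R$ with $R$ of compactly supported range. Moreover, for bounded measurable $A$ every remainder is form-valued in $W^{-1,p}$, not smooth and not in every $L^q$. Only the cylinder error $F_{B,s}u=(-\partial_s^2+L_B)U_s$ is compactly supported. Turning it into a correction for $L_B^{-1/2}$ requires the half-line Green function: at each height $s$ you invert the balanced functional $\psi\mapsto\langle F_{B,s}u,(I-e^{-s\sqrt{L_B}})\psi\rangle$ by the exterior inverse $L_B^{-1}$, and only then integrate in $s$. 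Two inputs for this are missing from your plan.

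First, you need the uniform bound $\|\nabla e^{-s\sqrt{L_B}}\psi\|_{p'}\lesssim\|\nabla\psi\|_{p'}$. The paper derives it from the \emph{reverse} inequality at $p'$ (Theorem~\ref{thm:intro-neumann-reverse} for $B=N$, Jiang--Lin for $B=D$). For Neumann data, $F_{N,s}u$ does not annihilate constants, so only the balanced functional lies in $\dot W^{-1,p}_N$.

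Second, you need integrable large-time decay. The uncentered end commutator contains $m_s(u)[L_B,\phi_e]1$, whose majorant is $s^{-n/p}$. For Dirichlet data this is integrable exactly when $p<n$. Together with fractional integration for $(\nabla\phi_e)L^{-1/2}(\rho_eu)$ and the kernel step below, this is where $p<n$ actually enters. For Neumann data with $p\ge n$ (every $p>2$ when $n=2$) it is not integrable. In that range the term $(\nabla\tilde\chi_\infty)L_\infty^{-1/2}(\chi_\infty u)$ in your $\nabla G$ is not even defined for general $u\in L^p$. The paper resolves both problems with Devyver's centered parametrix, which subtracts $(\phi_e-1)m_s(u)$. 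This preserves the Neumann condition and yields the rate $s^{-1-\alpha}$, via a weighted Poincar\'e inequality on the transition annulus and the model Riesz bound at some $r\in(p,\widehat P_N)$. Your remark that Neumann data have mean zero concerns the Green function, not this time integral. Also, the localized pieces $\rho_eF,\rho_cF$ of a Neumann datum are not mean-zero, so the model inverses require a rebalancing device (the $a(F)\theta$ terms), which you do not have.

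Your injectivity argument is also incomplete. Uniqueness of energy solutions does not give injectivity at exponent $p>2$, because $\dot W^{1,p}\not\subset\dot W^{1,2}$ on an exterior domain. For example, take $L=-\Delta$ outside the unit ball with $n\ge3$ and $p>n$; then $v=1-|x|^{2-n}$ is a nonzero element of $\dot W^{1,p}_0(\Omega)$ with $L_Dv=0$. You must therefore show that a $\dot W^{1,p}$ kernel element is an energy function. The paper does this with the left parametrix identity $v=\sum_j\phi_jS_{j,p}[L_B,\rho_j]v$, plus finite-rank terms when $B=N$. The compactly supported sources lie in both the exponent-$p$ and the energy negative spaces:
\begin{itemize}
\item for $D$, via $L^p(K)\subset L^{2n/(n+2)}(K)$ and the canonical $L^{p^*}$ representative, which needs $p<n$;
\item for $N$, via a Bogovski\u\i\ representation of the zero-mean scalar part.
\end{itemize}
Compatibility of the model inverses then promotes $v$ to $\dot W^{1,2}$. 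The same compatibility is what identifies the $p$-solution of $L_B(V_Bu-g_Bu)=\ell_u$ with $L_B^{-1/2}u$.

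You also need an index-zero argument, since a two-sided parametrix modulo compacts only gives Fredholmness; the paper transports index zero from $p=2$ via Kalton--Mitrea. Finally, the pointwise bound $|\nabla_x\Gamma(x,y)|\lesssim|x-y|^{1-n}$ you invoke fails for bounded measurable coefficients. Only $L^p$-averaged versions for $p<p_\infty$ (Caccioppoli plus reverse H\"older) are available.
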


The exponents $p_L$ and $p_{c,B}$ reflect limitations of the whole-space
and bounded-core problems, respectively. Kenig's counterexample
\cite[Section~4.2.2, Theorem~7]{AuscherTchamitchian98} shows that
$p_L$ can be arbitrarily close to $2$, while the estimates of Shen
and Geng \cite{Shen05,Geng12} give bounded-core ranges depending
on coefficient regularity and boundary geometry.

\begin{remark}\label{rem:end-model}
The relevant overlap is between the region where the core model is used
and the region where the end model is used.  In
\eqref{eq:overlapping-models}, it contains the open annulus
$B(0,R_+)\setminus\overline{B(0,R_-)}$, on which
$A_0=A_\infty$ almost everywhere.  The positive gaps between the radii
allow the cutoff supports to be separated.  
The balls are a convenient choice: the same proof works for a bounded
Lipschitz core with a smooth artificial boundary whenever the matching
regions admit the separated cutoffs and connected transition sets used
in the proof.  No whole-space estimate for an extension of $A_0$ is required.  
\end{remark}

The Fredholm construction is the main step in the forward argument.
The model inverses produce left and right parametrices on homogeneous
Sobolev spaces.  Local Rellich compactness controls their form-valued
remainders even for measurable coefficients.  In the Neumann case,
balanced localized data and fixed representatives remove the constant
modes.  The index is transported from the energy exponent along the
interpolation interval, and compatibility of the model inverses promotes
the remaining kernel to the energy space, where uniqueness applies.
This constructs the exterior inverse from the two model bounds before
the exterior Riesz estimate is proved.  Sections~\ref{sec:forward-parametrix}--\ref{sec:proof-forward} then use that
inverse to resolve the integrated Poisson error.

The quantities $p_L$ and $p_{c,B}$ come from two independent model problems.  In the whole space, the upper range of \eqref{eq:R-p} is usually described through reverse H\"older estimates for $L$-harmonic functions;
see, for example, \cite{Shen05,CJKS,Jiang21}.  On bounded Lipschitz domains, the corresponding Dirichlet and Neumann ranges follow
from the boundary regularity theories of \cite{Shen05, JerisonKenig95} and \cite{Geng12,Zanger2000}, respectively.  Thus $p_L$ records the information at infinity, whereas
$p_{c,B}$ records the interior boundary-value problem.  The content of
Theorem~\ref{thm:intro-forward} is that these two inputs alone control the
exterior transform, subject only to the natural Dirichlet restriction
$p<n$.  Under \eqref{eq:overlapping-models}, these inputs are
$p_\infty$ and $p_{0,B}$.  In particular, VMO regularity need only be
assumed on the core model; the coefficients farther out are governed by
the chosen end model.  

An immediate consequence of Theorem~\ref{thm:intro-neumann-reverse} and Theorem~\ref{thm:intro-forward} is the following.

\begin{corollary}
\label{cor:intro-neumann-equivalence}
For every $1<p<P_N$,
\[
 \|L_N^{1/2}f\|_{L^p(\Omega)}
 \simeq
 \|\nabla f\|_{L^p(\Omega)},
 \qquad f\in C_c^\infty(\overline\Omega).
\]
Under \eqref{eq:overlapping-models}, the same conclusion holds for
$1<p<\widehat P_N$.
\end{corollary}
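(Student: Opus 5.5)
The corollary combines two estimates already stated in the introduction, one for each direction of the equivalence.

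\emph{The upper bound.} The inequality $\|L_N^{1/2}f\|_{L^p(\Omega)}\lesssim\|\nabla f\|_{L^p(\Omega)}$ is exactly Theorem~\ref{thm:intro-neumann-reverse}, which holds for every $1<p<\infty$ and in particular for $1<p<P_N$.

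\emph{The lower bound.} By Theorem~\ref{thm:intro-forward}(i), the Riesz transform $\nabla L_N^{-1/2}$ is bounded on $L^p(\Omega)$ for every $1<p<P_N$. For $f\in C_c^\infty(\overline\Omega)$ we set $u=L_N^{1/2}f$ and want
\[
\|\nabla f\|_{L^p}=\|\nabla L_N^{-1/2}u\|_{L^p}\le C_p\|u\|_{L^p}.
\]

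The only point that needs care is justifying $\nabla f=\nabla L_N^{-1/2}L_N^{1/2}f$, since $L_N$ has the constant Neumann mode and is not invertible on constants.

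1. By spectral theory on $L^2(\Omega)$, $L_N^{-1/2}L_N^{1/2}f$ equals $f$ minus its projection onto $\ker L_N$.
2. For an exterior domain this kernel is trivial in $L^2$, because constants are not square integrable. In any case $\nabla$ annihilates constants, so the identity holds at the level of gradients.
3. Since $f\in D(L_N)\subset D(L_N^{1/2})$, we have $u\in L^2\cap L^p$. The second membership follows from Theorem~\ref{thm:intro-neumann-reverse}, because $\nabla f\in L^p$.
4. The $L^p$ bound for $\nabla L_N^{-1/2}$, first established on $L^2\cap L^p$, therefore applies to $u$ without any extension argument.

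This gives $\|\nabla f\|_p\le C_p\|L_N^{1/2}f\|_p$. Together with the upper bound, this proves the two-sided equivalence for $1<p<P_N$.

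\emph{The overlapping-models case.} Under \eqref{eq:overlapping-models}, the same argument applies verbatim. The reverse inequality is unchanged, and the general part of Theorem~\ref{thm:intro-forward} gives the forward bound with $\widehat P_N$ in place of $P_N$.
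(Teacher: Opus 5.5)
Your proposal is correct and is essentially the paper's argument: the paper records the corollary as an immediate consequence of Theorem~\ref{thm:intro-neumann-reverse} for one direction and Theorem~\ref{thm:intro-forward}(i) (with $\widehat P_N$ under \eqref{eq:overlapping-models}) for the other. One small slip: $f\in C_c^\infty(\overline\Omega)$ need not lie in $D(L_N)$, since it need not satisfy the conormal condition and $\diver(A\nabla f)$ need not be in $L^2$ for merely measurable $A$; however, your argument only uses $f\in D(L_N^{1/2})=W^{1,2}(\Omega)$, which does hold.
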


\subsection*{Comparison with previous results}

Closely related to Theorem~\ref{thm:intro-forward} is the recent work
of Jiang and Lin \cite{JiangLin24}. They characterize the boundedness
of the Dirichlet and Neumann Riesz transforms on exterior Lipschitz
domains through reverse H\"older estimates for local harmonic
functions satisfying the corresponding boundary conditions.
Combining these characterizations with the bounded-domain regularity
theory of Shen \cite{Shen05} and Geng \cite{Geng12}, together with
the results of Auscher--Qafsaoui \cite{AuscherQafsaoui} and
Auscher--Tchamitchian \cite{AuscherTchamitchian01}, they obtain
the following VMO ranges. Here $A\in\mathrm{VMO}(\R^n)$ means
that each entry belongs to BMO and
\[
 \lim_{r\downarrow0}\sup_{x\in\R^n}
 \fint_{B(x,r)}|A(y)-A_{B(x,r)}|\,\dd y=0.
\]

\begin{corollary}[Jiang--Lin~{\cite[Theorems~1.4 and~1.5]{JiangLin24}}]
\label{cor:intro-VMO}
Assume $A\in\mathrm{VMO}(\R^n)$.
\begin{enumerate}[label=\textup{(\roman*)}]
\item If $n\geq3$ and $\Omega$ is Lipschitz, then
\[
 \nabla L_D^{-1/2}\text{ is bounded on }L^p(\Omega),
 \qquad 1<p<\min\{n,p_L,3+\varepsilon\}.
\]
If $\partial\Omega$ is $C^1$, the upper endpoint is
$\min\{n,p_L\}$.
\item If $\Omega$ is Lipschitz, then
\[
 \nabla L_N^{-1/2}\text{ is bounded on }L^p(\Omega),
 \quad
 \begin{cases}
  1<p<\min\{p_L,3+\varepsilon\},&n\geq3,\\
  1<p<\min\{p_L,4+\varepsilon\},&n=2.
 \end{cases}
\]
If $\partial\Omega$ is $C^1$, the range is $1<p<p_L$.
\end{enumerate}
Here $\varepsilon>0$ depends on the ellipticity and Lipschitz data, and no
endpoint is included.
\end{corollary}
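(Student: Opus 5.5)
The corollary should follow by combining Theorem~\ref{thm:intro-forward} with the bounded-core and whole-space regularity literature cited above; we would not prove it from scratch. The plan is to bound the two model exponents $p_L$ and $p_{c,B}$ from below and read off the ranges. It is convenient to use the version of Theorem~\ref{thm:intro-forward} with overlapping models \eqref{eq:overlapping-models}, taking $A_0=A|_{B(0,R_+)}$ and $A_\infty=A$. Since $A\in\mathrm{VMO}(\R^n)$, both models inherit VMO coefficients, so $p_\infty=p_L$. The only remaining input is the bounded-core exponent $p_{0,B}$ on $\Omega_c=\Omega\cap B(0,R_c)$. This is a bounded Lipschitz domain whose boundary is the physical boundary $\partial\Omega$ together with the smooth sphere $\partial B(0,R_c)$, separated by a positive distance.

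For the core we would invoke the VMO theory on bounded Lipschitz domains. Shen \cite{Shen05} covers the Dirichlet case and Geng \cite{Geng12} the Neumann case. Both give the $W^{1,q}$ estimate $\|\nabla u\|_{L^q(\Omega_c)}\lesssim\|F\|_{L^q(\Omega_c)}$ for $-\diver(A\nabla u)=-\diver F$ (plus lower order) with the $B$ condition. The range is $q<3+\varepsilon$ when $n\ge3$ and $q<4+\varepsilon$ when $n=2$; it is $q<\infty$ when the boundary is $C^1$. Near the smooth artificial sphere the $C^1$ theory gives every $q$, so only $\partial\Omega$ limits the range. These estimates yield reverse Hölder inequalities for local weak solutions of $(1+L_{0,c,B})u=0$ with boundary condition $B$. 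By the Shen / Auscher--Tchamitchian criterion for the inhomogeneous operator $(1+L_{0,c,B})^{-1/2}$ on a bounded domain, which uses the Gaussian bounds and the $L^2$ theory \cite{AuscherTchamitchian03}, these reverse Hölder inequalities give $\nabla(1+L_{0,c,B})^{-1/2}$ bounded on $L^q(\Omega_c)$ in the same range. Hence $p_{0,B}\ge 3+\varepsilon$ (or $4+\varepsilon$ when $n=2$ and $B=N$), and $p_{0,B}=\infty$ for $C^1$ boundary.

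Substituting into Theorem~\ref{thm:intro-forward}(i) gives the Neumann range $1<p<\min\{p_L,3+\varepsilon\}$, or $\min\{p_L,4+\varepsilon\}$ when $n=2$; the $C^1$ case gives $1<p<p_L$. Part (ii) gives the Dirichlet range $1<p<\min\{n,p_L,3+\varepsilon\}$, which becomes $\min\{n,p_L\}$ for $C^1$ boundary. The lower range $1<p\le2$ is already included in Theorem~\ref{thm:intro-forward}.

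The main obstacle is the second step: converting the $W^{1,q}$ regularity of Shen and Geng into boundedness of the local Riesz transform of the core realization with the \emph{mixed} boundary, physical plus artificial. Care is needed in the Neumann case, where the constant mode is only handled through the $1+L$ shift, and in checking that the $\varepsilon$ depends only on ellipticity and the Lipschitz character, uniformly in the choice of $R_c$. The first thing to try is the Shen-type good-$\lambda$ argument, localized in balls that meet at most one boundary component.
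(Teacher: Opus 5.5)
Your overall architecture matches the paper's. You bound the core exponent from below using the VMO theory on bounded Lipschitz domains, then read the ranges off Theorem~\ref{thm:intro-forward}. Using the overlapping version with $A_0=A|_{B(0,R_+)}$ and $A_\infty=A$ changes nothing, since then $p_\infty=p_L$ and $p_{0,B}=p_{c,B}$.

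The gap is the step you yourself flag as the main obstacle. You propose to turn the $W^{1,q}$ estimates into reverse H\"older inequalities and then apply a Shen-type criterion to $(1+L_{0,c,B})^{-1/2}$. However, Shen's characterization is stated only for the whole space and for Dirichlet data on bounded Lipschitz domains. For the Neumann core you would have to build an analogue yourself, for the shifted operator and accounting for the constant mode. The proposal stops at suggesting a good-$\lambda$ argument, so as written $p_{c,N}\ge 3+\varepsilon$ (resp.\ $4+\varepsilon$ when $n=2$) is not established. That bound is the only nontrivial input of the corollary.

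The missing idea is that no real-variable argument is needed. Use Geng's $W^{1,q}$ estimate in the dual range $q<2$, let the all-exponent reverse inequality do the rest, and dualize.
\begin{enumerate}
\item For $F\in C_c^\infty(\Omega_c;\R^n)$ and $(3+\varepsilon)'<q<2$ (resp.\ $(4+\varepsilon)'<q<2$), let $u=L_{c,N}^{-1}\diver F$ be the mean-zero solution. Geng gives $\|\nabla u\|_q\lesssim\|F\|_q$, and Poincar\'e gives $\|u\|_q\lesssim\|F\|_q$.
\item The Auscher--Tchamitchian bounded-domain reverse estimate $\|L_{c,N}^{1/2}u\|_q\lesssim\|u\|_q+\|\nabla u\|_q$, valid for every $1<q<\infty$, then yields $\|L_{c,N}^{-1/2}\diver F\|_q\lesssim\|F\|_q$.
\item Duality gives that $\nabla L_{c,N}^{-1/2}$ is bounded on $L^{q'}$.
\item The shift is removed by writing $\nabla(1+L_{c,B})^{-1/2}=\nabla L_{c,B}^{-1/2}\,(L_{c,B}/(1+L_{c,B}))^{1/2}$ and applying Stein's multiplier theorem.
\end{enumerate}

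For Dirichlet data no conversion is needed at all. Shen's Theorem~C is already the Riesz-transform bound on bounded Lipschitz domains, and Auscher--Qafsaoui covers all exponents for $C^1$ boundaries.

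The other difficulties you anticipate do not arise:
\begin{itemize}
\item $L_{c,B}$ carries the same pure condition $B$ on both boundary components. So $\Omega_c$ is simply a bounded Lipschitz domain (of class $C^1$ when $\partial\Omega$ is), and Shen's and Geng's theorems apply to it verbatim.
\item No localization to balls meeting a single boundary component is required.
\item $R_c$ is fixed once, so no uniformity in $R_c$ is at stake.
\end{itemize}
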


We provide an alternative proof of this corollary in
Section~\ref{sec:applications}. Theorem~\ref{thm:intro-forward}
derives the exterior estimate directly from the whole-space and
pure bounded-core Riesz bounds. These enter as separate inputs
through $p_L$ and $p_{c,B}$, with the additional restriction
$p<n$ in the Dirichlet case.

The distinction is in the mechanism of proof. Jiang--Lin establish
reverse H\"older estimates on the exterior domain at all scales.
Our Poisson parametrix separates the two model contributions,
and a homogeneous Fredholm argument constructs the exterior
inverse needed to correct the resulting error. This formulation
allows the whole-space and boundary theories to be used
independently. In particular, different coefficient assumptions
may be imposed on the core and at infinity, provided that the
models agree on the overlap as in
Theorem~\ref{thm:intro-forward}.

\subsection*{Organization}

Part~1 proves the reverse results.  Section~\ref{sec:connected-stability}
develops the connected-sum argument and proves Theorem
\ref{thm:intro-connected}; Section~\ref{sec:exterior-reverse} treats the
Neumann exterior problem and proves Theorem
\ref{thm:intro-neumann-reverse}.  Part~2 constructs the forward Poisson
parametrix, proves a homogeneous Fredholm theorem, and uses it to resolve
the integrated form error in every dimension, yielding Theorem
\ref{thm:intro-forward}.  Part~3 records the coefficient applications,
the application to nilpotent model ends, and additional stability problems.

\part{Stability of the square-root inequalities}\label{part1}

All form identities in Parts~\ref{part1} and~\ref{part2} are first
verified on the indicated smooth form cores.  Whenever both sides extend
continuously to the displayed Sobolev spaces, passage to the completion is
understood and will not be repeated.

\section{Stability on connected sums}
\label{sec:connected-stability}

Throughout this section, $\Delta$ denotes the nonnegative Laplace--Beltrami
operator on a complete Riemannian manifold.  We use $d$ for the exterior
differential on functions.  The strategy is Carron's: solve the Poisson
extension problem on a compact core and on each model end, glue the solutions,
and control the compactly supported cylinder error.

\subsection{Carron's parametrix for the Poisson semigroup}

We first recall the semigroup consequence of a Sobolev inequality; see also \cite[Lemma~2.3]{Carron07}.

\begin{lemma}\label{lem:connected-poisson-smoothing}
Let $X$ be a complete Riemannian manifold and let $\nu\geq2$.
Suppose that either $\nu>2$ and
\[
 \|h\|_{L^{2\nu/(\nu-2)}(X)}
 \leq C\|dh\|_{L^2(X)},
 \qquad h\in C_c^\infty(X),
\]
or $\nu=2$ and
\[
 \|h\|_{L^2(X)}^4
 \leq C\|dh\|_{L^2(X)}^2\|h\|_{L^1(X)}^2,
 \qquad h\in C_c^\infty(X).
\]
Then, for $1\leq r\leq q\leq\infty$ and $s>0$,
\[
 \bigl\|e^{-s\sqrt{\Delta_X}}\bigr\|_{L^r(X)\to L^q(X)}
 \leq C_{r,q}s^{-\nu(1/r-1/q)}.
\]
\end{lemma}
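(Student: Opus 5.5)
The plan is to pass from the Sobolev or Nash inequality to ultracontractivity of the heat semigroup, then to Poisson-semigroup bounds by subordination, and finally to interpolate.

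\emph{Step 1: heat semigroup.} For $\nu>2$ the homogeneous Sobolev inequality is equivalent to the Nash inequality
\[
 \|h\|_2^{2+4/\nu}\leq C\|dh\|_2^2\|h\|_1^{4/\nu}
\]
(see \cite{BCLS95}); for $\nu=2$ this is exactly the hypothesis. By density, the inequality holds on the form domain of $\Delta_X$; completeness of $X$ gives essential self-adjointness, so $C_c^\infty(X)$ is a form core. We then run Nash's argument. For $f\in L^1\cap L^2$ with $\|f\|_1=1$, put $\phi(t)=\|e^{-t\Delta}f\|_2^2$. Then $\phi'=-2\|de^{-t\Delta}f\|_2^2$. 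Since the heat semigroup is sub-Markovian, $\|e^{-t\Delta}f\|_1\leq1$, and Nash's inequality gives $\phi'\leq -c\,\phi^{1+2/\nu}$. Integrating yields $\phi(t)\leq Ct^{-\nu/2}$, that is,
\[
 \|e^{-t\Delta}\|_{1\to2}\leq Ct^{-\nu/4}.
\]
Duality and the semigroup property then give $\|e^{-t\Delta}\|_{1\to\infty}\leq Ct^{-\nu/2}$.

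\emph{Step 2: subordination.} We write
\[
 e^{-s\sqrt\Delta}=\int_0^\infty \frac{s}{2\sqrt\pi}\,t^{-3/2}e^{-s^2/(4t)}\,e^{-t\Delta}\,\dd t .
\]
Taking norms,
\[
 \|e^{-s\sqrt\Delta}\|_{1\to\infty}\leq C\int_0^\infty s\,t^{-3/2-\nu/2}e^{-s^2/4t}\,\dd t .
\]
The substitution $t=s^2u$ shows that this integral equals $Cs^{-\nu}$; it converges because $\nu>0$. The subordination formula is the one point needing care, namely that the Bochner integral converges in operator norm $L^1\to L^\infty$. This is fine since the integrand bound is integrable near $0$ (thanks to the Gaussian factor) and near $\infty$ (thanks to $t^{-3/2-\nu/2}$).

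\emph{Step 3: interpolation.} The Poisson semigroup is a contraction on every $L^r$, $1\leq r\leq\infty$, because it is subordinated to a sub-Markovian semigroup. Riesz--Thorin interpolation between the $L^1\to L^\infty$ bound $Cs^{-\nu}$ and the $L^r\to L^r$ contractions gives, for $r\leq q$, the bound $Cs^{-\nu(1/r-1/q)}$. Concretely, first interpolate to get $L^1\to L^q$ with bound $s^{-\nu(1-1/q)}$, then interpolate this against $L^q\to L^q$. Equivalently, we may use the symmetric route through $L^r\to L^\infty$.

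The main obstacle is Step 1 in the case $\nu=2$, where no Sobolev embedding is available. Nash's differential inequality still works verbatim with exponent $1+2/\nu=2$, giving $\phi(t)\lesssim t^{-1}$. So no genuine difficulty arises, only bookkeeping of constants.
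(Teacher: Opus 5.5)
Your proof is correct and follows the paper's route: ultracontractivity of the heat semigroup from the Sobolev/Nash inequality (the paper cites Varopoulos--Saloff-Coste--Coulhon, while you derive it by Nash's differential inequality), then subordination combined with Riesz--Thorin interpolation against the $L^r$ contractions; the paper interpolates the heat bounds first and then subordinates, whereas you subordinate the $L^1\to L^\infty$ bound and interpolate afterwards, and the two orderings are equivalent. One small point: instead of asserting norm convergence of an operator-valued Bochner integral in $\mathcal{L}(L^1,L^\infty)$, run the subordination on the kernels or on the pairings $\langle e^{-s\sqrt\Delta}f,g\rangle$ with $f,g\in L^1\cap L^2$, since operator-norm measurability is not needed and not obviously available.
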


\begin{proof}
In either case, the Sobolev--Nash ultracontractivity criterion gives
the heat-kernel bound
\[
 0\leq h_t^X(x,y)\leq Ct^{-\nu/2},
 \qquad t>0;
\]
see \cite{VaropoulosSaloffCosteCoulhon92}.
Interpolation with the $L^r$ contractivity of the heat semigroup yields
\[
 \|e^{-t\Delta_X}\|_{L^r(X)\to L^q(X)}
 \leq C_{r,q}t^{-\frac{\nu}{2}(1/r-1/q)}.
\]
The conclusion follows by inserting this estimate into the
subordination formula
\[
 e^{-s\sqrt{\Delta_X}}
 =\frac{s}{2\sqrt{\pi}}
   \int_0^\infty
   e^{-s^2/(4t)}e^{-t\Delta_X}\,\frac{dt}{t^{3/2}}.
\]
\end{proof}

% \begin{lemma}\label{lem:connected-poisson-smoothing}
% \cite[Lemma~2.3]{Carron07}
% Let $X$ be complete and suppose that, for some $\nu>2$,
% \[
%  \norm h{L^{2\nu/(\nu-2)}(X)}\leq C\norm{dh}{L^2(X)},
%  \qquad h\in C_c^\infty(X).
% \]
% Then, for $1\leq r\leq q\leq\infty$ and $s>0$,
% \[
%  \norm{e^{-s\sqrt{\Delta_X}}}{L^r(X)\to L^q(X)}
%  \leq C_{r,q}s^{-\nu(1/r-1/q)}.
% \]
% \end{lemma}

Write
\[
 M\setminus K=\bigsqcup_{j=1}^{\ell}E_j,
 \qquad E_j\simeq M_j\setminus K_j,
\]
where $M_j$ are complete model manifolds.  We assume throughout
that the finitely many models have Ricci curvature bounded below and
satisfy the corresponding Sobolev or Nash hypothesis of Lemma~\ref{lem:connected-poisson-smoothing} with the same exponent $\nu \ge2$.
The constants may depend on $j$; since the number of ends is finite, their
maximum will be used without further comment.
Choose a bounded smooth neighborhood $\widetilde K$ of $K$, a partition of
unity $\rho_0+\sum_{j=1}^{\ell}\rho_j=1$, and cutoffs
$\phi_0,\ldots,\phi_\ell$ such that
\begin{align*}
 &\supp\rho_0,\supp\phi_0\subset\widetilde K,
 \qquad \supp\rho_j,\supp\phi_j\subset E_j\quad(1\leq j\leq\ell),\\
 &\phi_j\rho_j=\rho_j,
 \qquad \dist(\supp d\phi_j,\supp\rho_j)>0,
 \quad 0\leq j\leq\ell.
\end{align*}
For $j\geq1$, let $\Delta_j$ be the Laplacian on $M_j$.
Let $\Delta_0$ be the Dirichlet Laplacian on a bounded smooth core whose
artificial boundary has positive distance from the supports of $\rho_0$ and
$\phi_0$.  Thus $\Delta_0$ has a spectral gap.  Set
\[
 \mathcal S_su=\sum_{j=0}^{\ell}
 \phi_je^{-s\sqrt{\Delta_j}}(\rho_ju).
\]
Because $\phi_j\rho_j=\rho_j$, $\mathcal S_0u=u$.  In the distributional
sense on $(0,\infty)\times M$,
\[
 \left(-\partial_s^2+\Delta\right)\mathcal S_su
 =\sum_{j=0}^{\ell}E_{j,s}u,
 \qquad
 E_{j,s}u=[\Delta,\phi_j]
 e^{-s\sqrt{\Delta_j}}(\rho_ju).
\]
The errors are supported in a fixed compact set.  Since the Laplacian is
smooth, the commutator reads
\[
 E_{j,s}u=(\Delta\phi_j)e^{-s\sqrt{\Delta_j}}(\rho_ju)
 -2\,d\phi_j\mathbin{\cdo}d e^{-s\sqrt{\Delta_j}}(\rho_ju).
\]

The formal Green correction for this cylinder error motivates the gradient
parametrix displayed in the proof below.  We shall not integrate the Green
operator as a stand-alone zero-energy Bochner integral.  Instead, a
finite-cylinder Wronskian identity provides directly the only pairing needed
for the reverse inequality.  This keeps the two time boundaries visible and
avoids an unnecessary convergence question for the scalar correction.

The parametrix argument uses the following compact-annulus bound. Its proof from \cite[Lemma~3.1]{Carron07} works verbatim to the critical case $\nu=2$ with Nash inequality.

\begin{lemma}
\label{lem:connected-error-bound}
\cite[Lemma~3.1]{Carron07}
Let $1\leq a\leq\infty$.  For $u\in L^a(M)$ and
$0\leq j\leq\ell$,
\[
 \norm{E_{j,s}u}{L^1(M)}
 +\norm{E_{j,s}u}{L^b(M)}
 \leq C_{a,b}(1+s)^{-\nu/a}\norm u{L^a(M)},
 \qquad s\geq0,
\]
for every fixed $1\leq b\leq\infty$.
\end{lemma}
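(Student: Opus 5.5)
The plan is to split $E_{j,s}u$ into its two commutator terms, $(\Delta\phi_j)\,e^{-s\sqrt{\Delta_j}}(\rho_ju)$ and $-2\,d\phi_j\mathbin{\cdo}d\,e^{-s\sqrt{\Delta_j}}(\rho_ju)$. Both are supported in the fixed compact set $\supp d\phi_j$, which we call $U_j$. Since $U_j$ has finite volume, Hölder's inequality reduces the $L^1$ and $L^b$ norms to $L^\infty(U_j)$ norms of $w_s:=e^{-s\sqrt{\Delta_j}}(\rho_ju)$ and of $dw_s$. The constant depends only on $|U_j|$ and on $\|\phi_j\|_{C^2}$. Thus it suffices to show
\[
 \|w_s\|_{L^\infty(U_j)}+\|dw_s\|_{L^\infty(U_j)}\leq C(1+s)^{-\nu/a}\|u\|_{L^a(M)} .
\]
Here $\|\rho_ju\|_{L^a(M_j)}\leq\|u\|_{L^a(M)}$, because the end $E_j$ embeds isometrically into $M_j$.

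For $s\geq1$ we use Lemma~\ref{lem:connected-poisson-smoothing} with $(r,q)=(a,\infty)$. On the model ends it gives $\|w_s\|_\infty\leq Cs^{-\nu/a}\|u\|_a$. For $j=0$ the Dirichlet core has a spectral gap, so the decay is even exponential. For the gradient we use that $W(t,x)=e^{-t\sqrt{\Delta_j}}(\rho_ju)(x)$ is harmonic in $(t,x)$ on $(0,\infty)\times M_j$, i.e. $(-\partial_t^2+\Delta_j)W=0$. The key fact is that $U_j$ has positive distance from $\supp\rho_j$, so $W$ is also harmonic up to $t=0$ across $U_j$: its reflection is harmonic in a neighbourhood of $\{0\}\times U_j$, since the data vanish near $U_j$.

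Ricci curvature bounded below gives a uniform local gradient (Cheng--Yau / Moser) estimate on the product $\R\times M_j$, on cylinders of unit size. It yields
\[
 |dW(s,x)|\leq C\sup_{|t-s|<1/2,\ d(y,x)<r_0}|W(t,y)|,
\]
with $r_0$ a fixed radius, uniform in $x\in U_j$ and valid for all $s\geq0$ thanks to the reflection. On $j=0$ the core is compact and smooth, so classical elliptic regularity serves the same purpose. Combining this with the sup bound on the slightly enlarged set $\{(t,y): |t-s|<1/2,\ d(y,U_j)<r_0\}$ gives the gradient estimate. For $s\geq1$ this is $C s^{-\nu/a}\|u\|_a$.

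For $0\leq s\leq1$ we need uniform boundedness, i.e. $\|W(t)\|_{L^\infty(U_j')}\leq C\|u\|_a$ for $t\in[0,3/2]$, where $U_j'$ is a neighbourhood of $U_j$ disjoint from $\supp\rho_j$. The Poisson semigroup smoothing $s^{-\nu/a}$ blows up at $s=0$. However, there is no singularity off the support. Write the Poisson kernel through subordination of the heat kernel. The heat kernel $h_t(x,y)$ with $d(x,y)\geq\delta$ has Gaussian off-diagonal bounds: from the ultracontractive bound $t^{-\nu/2}$ via Davies--Gaffney / Grigor'yan we get $h_t(x,y)\leq Ct^{-\nu/2}e^{-\delta^2/(Ct)}$. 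This is bounded uniformly for $t\leq1$ and by $C t^{-\nu/2}$ for large $t$.

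Integrating against the subordination weight shows that the Poisson kernel restricted to $U_j'\times\supp\rho_j$ is bounded by $C s\,(s^2+\delta^2)^{-(\nu+1)/2}$ pointwise. We then need this kernel in $L^{a'}$ in $y$. One option is Hölder on the compact $\supp\rho_j$, if it is compact; but $\supp\rho_j$ is unbounded on an end. So instead we factor
\[
 e^{-s\sqrt\Delta}=e^{-(s/2)\sqrt\Delta}\,e^{-(s/2)\sqrt\Delta},
\]
or more simply combine the $L^a\to L^2$ smoothing with the off-diagonal $L^2$ (Davies--Gaffney) estimates, followed by the local elliptic estimate. The latter converts $L^2$ on a unit cylinder near $U_j$ into $L^\infty$ and gradient control, since $W$ is harmonic there.

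This near-$s=0$ step is the main obstacle, because the Sobolev hypothesis yields only on-diagonal smoothing. I would handle it by the local $L^2$ mean-value inequality on the cylinder $[-1,2]\times U_j''$, after even reflection, together with the bound
\[
 \int_0^2\|W(t)\|^2_{L^2(U_j'')}\,dt\lesssim\|\rho_ju\|_2^2
\]
when $a\le2$. When $a>2$ we first use $L^a\to L^\infty$ on $W(1)$ and treat $[0,1]$ via the harmonic extension from the time slice $t=1$ and from the data. In all cases this gives $C\|u\|_a$ on $s\le1$, which matches $(1+s)^{-\nu/a}$ there.
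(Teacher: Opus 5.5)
Your H\"older reduction to $\|w_s\|_{L^\infty(U_j)}+\|dw_s\|_{L^\infty(U_j)}$ and your large-time argument are sound, and they follow the paper's route. The paper takes an $L^\infty$ bound for $E_{j,s}u$ from Carron's separated Poisson estimates and the core spectral gap, then applies H\"older on the fixed compact support. For $s\ge1$ you do not even need Cheng--Yau: $-\partial_t^2+\Delta_j$ is invariant under $t$-translations and $U_j$ is compact, so interior estimates on unit cylinders have $s$-independent constants.

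The gap is the short-time step $0\le s\le1$, which is exactly the separated estimate you set out to reprove. The route you commit to does not close.
\begin{itemize}
\item For $a<2$, the bound $\int_0^2\|W(t)\|_{L^2(U_j'')}^2\,\dd t\lesssim\|\rho_ju\|_2^2$ never reaches $\|u\|_{L^a}$. An $L^a$ function need not lie in $L^2$, neither locally nor on the infinite-volume end.
\item For $a>2$, recovering $W$ on $(0,1)\times U_j'$ by harmonic extension from the slices $t=0$ and $t=1$ is circular. A maximum-principle bound needs either $\|\rho_ju\|_{L^\infty}$ or control of $W$ on the lateral boundary $(0,1)\times\partial U_j'$, and the latter is the off-diagonal estimate itself.
\item The reflection must be odd, not even. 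It is $W(0,\cdot)=\rho_ju$ that vanishes near $U_j$; $\partial_tW(0,\cdot)=-\Delta_j^{1/2}(\rho_ju)$ does not.
\end{itemize}

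The missing step is short and uses only what you already derived. For $d(x,y)\ge\delta$ you have $p_s(x,y)\le Cs\,\delta^{-\nu-1}$, and the Poisson kernel is sub-Markovian, $\int p_s(x,y)\,\dd y\le1$. Hence
\[
 \int_{d(x,y)\ge\delta}p_s(x,y)^{a'}\,\dd y\le\bigl(Cs\,\delta^{-\nu-1}\bigr)^{a'-1},
 \qquad
 \sup_{x\in U_j'}|W(s,x)|\le\bigl(Cs\,\delta^{-\nu-1}\bigr)^{1/a}\|u\|_{L^a(M)}.
\]
This needs no compactness of $\supp\rho_j$ and no $L^2$ step.

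For $a<\infty$ the same bound gives $W(s,\cdot)\to0$ uniformly on $U_j'$ as $s\downarrow0$. That is what legitimizes the odd reflection, and hence your interior gradient bound, down to $s=0$. For $a=\infty$ the same continuity follows by subordination from $\int_{d(x,y)\ge\delta}h_t(x,y)\,\dd y\to0$ as $t\downarrow0$, uniformly for $x\in U_j'$.

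With this step inserted, your argument is complete. It amounts to a proof of the separated Poisson estimate that the paper imports from Carron.
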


\begin{remark}
Carron states \cite[Lemma~3.1]{Carron07} with the $L^1+L^a$ norm for the
sum of the finitely many errors, but his proof estimates each error
separately.  Indeed, the separated Poisson estimates
\cite[Lemma~2.4]{Carron07} and the spectral gap of the bounded core imply
\[
 \|E_{j,s}u\|_{L^\infty(M)}
 \lesssim (1+s)^{-\nu/a}\|u\|_{L^a(M)}.
\]
Since all the errors are supported in one fixed compact set, H\"older's
inequality yields the termwise $L^1$ and $L^b$ bounds in
Lemma~\ref{lem:connected-error-bound}.
\end{remark}

\subsection{Sobolev inequality on connected sums}

The proof of the reverse theorem needs the $L^p$ Sobolev inequality on the
glued manifold.  We record exactly the stability statement used below.

\begin{proposition}\label{prop:connected-Sobolev}
Let $1<p<\nu$ and $\nu\ge 2$. Suppose
\begin{equation}\label{eq:model-p-Sobolev}
 \|h\|_{L^{p^*}(M_j)}
 \leq C_j\|dh\|_{L^p(M_j)},
 \qquad p^*=\frac{p\nu}{\nu-p},
 \qquad h\in C_c^\infty(M_j),\quad 1\leq j\leq\ell.
\end{equation}
Then their connected sum $M$ satisfies
\begin{equation}\label{eq:sum-p-Sobolev}
 \|f\|_{L^{p^*}(M)}
 \leq C\|df\|_{L^p(M)},
 \qquad f\in C_c^\infty(M).
\end{equation}

\end{proposition}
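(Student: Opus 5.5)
Split $f\in C_c^\infty(M)$ with the partition of unity $\rho_0+\sum_{j\ge1}\rho_j=1$, writing $f=\rho_0f+\sum_{j\ge1}\rho_jf$. For $j\ge1$, $\rho_jf$ is supported in $E_j\simeq M_j\setminus K_j$, so it can be viewed as an element of $C_c^\infty(M_j)$. Then \eqref{eq:model-p-Sobolev} gives
\[
 \|\rho_jf\|_{L^{p^*}(M)}\le C_j\|d(\rho_jf)\|_{L^p(M)}
 \le C_j\|df\|_{L^p(M)}+C_j\|f\,d\rho_j\|_{L^p(M)} .
\]
The term $\rho_0f$ and all the terms $f\,d\rho_j$ are supported in the fixed compact set $\widetilde K$. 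So the whole problem reduces to a single local estimate:
\[
 \|f\|_{L^{p^*}(\widetilde K)}+\|f\|_{L^p(\widetilde K)}\le C\|df\|_{L^p(M)},\qquad f\in C_c^\infty(M). \tag{$*$}
\]
Once $(*)$ holds, the local Sobolev inequality on the compact smooth set $\widetilde K$, which has bounded geometry, controls $\|\rho_0f\|_{L^{p^*}}$ by $\|df\|_{L^p}+\|f\|_{L^p(\widetilde K)}$. Since $p^*>p$, H\"older on $\widetilde K$ handles the remaining lower-order terms.

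I would prove $(*)$ by the standard compactness--contradiction argument. Suppose $f_k\in C_c^\infty(M)$ with $\|df_k\|_{L^p(M)}\to0$ but $\|f_k\|_{L^p(\widetilde K')}=1$, where $\widetilde K'$ is a slightly larger compact connected smooth domain. I choose $\widetilde K'$ so that it meets each end $E_j$ in a set containing an annular region $A_j$ far inside $E_j$.

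The key is to control $f_k$ on each end using the model Sobolev inequality. Take a cutoff $\chi_j$ equal to $1$ near infinity of $E_j$ and supported in $E_j$. Then \eqref{eq:model-p-Sobolev} applied to $\chi_jf_k$ gives
\[
 \|\chi_jf_k\|_{L^{p^*}}\lesssim\|df_k\|_{L^p}+\|f_k\|_{L^p(\supp d\chi_j)} .
\]
Here $\supp d\chi_j\subset\widetilde K'$. By Rellich on $\widetilde K'$ (a bounded Lipschitz domain), a subsequence converges in $L^p(\widetilde K')$ to a function $g$ with $dg=0$. Since $\widetilde K'$ is connected, $g$ is a constant $c$, and $|c|\,|\widetilde K'|^{1/p}=1$, so $c\ne0$.

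Now apply \eqref{eq:model-p-Sobolev} to the compactly supported functions $\chi_jf_k$ on $M_j$. On the region where $\chi_j=1$, which has infinite volume, $f_k$ tends in $L^{p^*}_{\rm loc}$ to a limit whose $L^{p^*}$ norm there is bounded by $C\,|c|\,|\supp d\chi_j|^{1/p}<\infty$. But the limit equals the same constant $c$ on the overlap with $\widetilde K'$. The gradient tends to zero and the far region is connected, so the limit is $c$ on the whole end. That contradicts finite $L^{p^*}$ norm, because an end of a manifold satisfying the $L^2$ Sobolev or Nash inequality with $\nu\ge2$ has infinite volume: the heat-kernel bound from Lemma~\ref{lem:connected-poisson-smoothing} gives polynomial volume growth $V(x,r)\gtrsim r^\nu$, using also the Ricci lower bound for local doubling. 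Hence $(*)$ holds.

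The main obstacle is making this contradiction argument rigorous. One needs uniform $L^{p^*}$ bounds on the ends to pass to a limit equal to a nonzero constant on an infinite-volume set. One also needs infinite volume of each end: $E_j$ differs from $M_j$ by a compact set, and $M_j$ has infinite volume since otherwise constants would violate \eqref{eq:model-p-Sobolev} via cutoff approximations. If a direct route is preferred, I would instead compare $f$ with its mean $c$ on an annulus $A_j$. Poincar\'e on $A_j$ gives $\|f-c\|_{L^p(A_j)}\lesssim\|df\|_{L^p}$. Then $|c|$ is bounded using \eqref{eq:model-p-Sobolev} applied to $\chi_jf$ tested against a large-volume region. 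The compactness version seems cleaner, so I would carry that one out.
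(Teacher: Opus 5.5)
Your argument is correct and has the same overall structure as the paper's proof. You cut $f$ into the ends, apply the model inequality to each $\chi_jf$ (your $\rho_jf$), use local Sobolev on the core, and reduce everything to the compact Poincar\'e inequality $\|f\|_{L^p(\widetilde K)}\le C\|df\|_{L^p(M)}$.

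The difference is how you obtain that last inequality. The paper notes that the model inequality plus H\"older on compact sets makes each $M_j$ $p$-hyperbolic. It then cites the compact-gluing stability of $p$-hyperbolicity \cite[Corollary~2.1]{Devyver15}. You instead prove the inequality directly by a Rellich compactness--contradiction argument, whose only global input is that the ends have infinite volume.

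The two routes buy different things:
\begin{itemize}
\item The paper's route is shorter. For this step it needs only $p$-hyperbolicity of the models, which is weaker than the Sobolev inequality you use to keep $\chi_jf_k$ bounded in $L^{p^*}$.
\item Your route is self-contained and shows the mechanism: the only possible obstruction is a nonzero constant, and a constant cannot have finite $L^{p^*}$ norm on an infinite-volume end.
\end{itemize}

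Three points need tightening.

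\emph{Connectedness of the far region.} Your claim that $\{\chi_j=1\}$ is connected can fail, since a model $M_j$ may itself have several ends. A cleaner finish is as follows. Because $f_k\to c$ in $L^p(\widetilde K')$ and $\supp d\chi_j\subset\widetilde K'$, one has $d(\chi_jf_k)\to c\,d\chi_j$ in $L^p(M_j)$. The model inequality applied to differences then makes $\chi_jf_k$ Cauchy in $L^{p^*}(M_j)$. Its limit $h$ satisfies $d(h-c\chi_j)=0$ on the connected manifold $M_j$, and $h=c\chi_j$ on $\widetilde K'\cap E_j$. Hence $h=c\chi_j$, and $h\in L^{p^*}(M_j)$ together with $\operatorname{vol}(M_j)=\infty$ forces $c=0$.

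\emph{Infinite volume.} Rely on your cutoff argument here. On a complete $M_j$ of finite volume, $1$ would be a $W^{1,p}$-limit of functions $h_k\in C_c^\infty(M_j)$ with $\|dh_k\|_p\to0$ and $\liminf_k\|h_k\|_{p^*}>0$, contradicting the model inequality. Do not use heat-kernel volume bounds and a Ricci bound, which are not hypotheses of the proposition.

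\emph{Local Sobolev exponent on the core.} The local Sobolev inequality on the core at exponent $p^*$ does not follow from bounded geometry alone. When $p<n$ it needs $p^*\le np/(n-p)$, that is, $\nu\ge n$. This is forced by the model inequality itself: test it on functions concentrated in a small coordinate ball. The paper makes exactly this remark at that step.
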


% \begin{proposition}\label{prop:connected-Sobolev}
% Let $1<p<\nu$.  Suppose
% \begin{equation}\label{eq:model-p-Sobolev}
%  \norm h{L^{p^*}(M_j)}\leq C_j\norm{dh}{L^p(M_j)},
%  \qquad p^*=\frac{p\nu}{\nu-p},
%  \qquad h\in C_c^\infty(M_j),\quad 1\leq j\leq\ell.
% \end{equation}
% Then their connected sum $M$ satisfies
% \begin{equation}\label{eq:sum-p-Sobolev}
%  \norm f{L^{p^*}(M)}\leq C\norm{df}{L^p(M)},
%  \qquad f\in C_c^\infty(M).
% \end{equation}
% \end{proposition}

\begin{proof}
Choose smooth functions $\chi_0,\ldots,\chi_\ell$ with
$\sum_{j=0}^\ell\chi_j=1$, where $\chi_0$ and every $d\chi_j$ are supported
in one fixed compact set $K_0$, and where $\chi_j$ is supported in $E_j$ for
$j\geq1$.  Put $f_j=\chi_jf$.  For $j\geq1$, identify $f_j$ with a compactly
supported function on $M_j$.  Applying \eqref{eq:model-p-Sobolev}, we obtain
\[
 \norm{f_j}{L^{p^*}(M_j)}
 \leq C_j\norm{d(\chi_jf)}{L^p(M_j)}
 \leq C\norm{df}{L^p(M)}+C\norm f{L^p(K_0)}.
\]
On a slightly larger compact neighborhood of $K_0$, the local Sobolev
inequality yields the same estimate for $f_0$.  There is no hidden exponent
restriction here: the validity of \eqref{eq:model-p-Sobolev} on a smooth
model forces $p^*$ not to exceed the local Sobolev exponent, and the same
local exponent is available on the compact core.  Thus, after enlarging
$K_0$ to $\widetilde K$, we have
\begin{align}
 \norm{f_j}{p^*}&\leq C\norm{df}{p}+C\norm f{L^p(\widetilde K)},
 \label{eq:sum-Sobolev-end}\\
 \norm{f_0}{p^*}&\leq C\norm{df}{p}+C\norm f{L^p(\widetilde K)}.
 \label{eq:sum-Sobolev-core}
\end{align}
It remains to control the compact norm.  Inequality
\eqref{eq:model-p-Sobolev} implies $p$-hyperbolicity of each $M_j$: if
$E\subset M_j$ is a compact set of positive measure, then H\"older's
inequality and \eqref{eq:model-p-Sobolev} imply
\[
 \norm h{L^p(E)}
 \leq |E|^{1/p-1/p^*}\norm h{L^{p^*}(M_j)}
 \leq C_E\norm{dh}{L^p(M_j)}.
\]
The compact-gluing stability of $p$-hyperbolicity
\cite[Corollary~2.1]{Devyver15} provides the corresponding compact Poincar\'e
inequality on $M$:
\begin{equation}\label{eq:compact-poincare-sum}
 \norm f{L^p(\widetilde K)}\leq C_{\widetilde K}\norm{df}{L^p(M)}.
\end{equation}
Combining \eqref{eq:sum-Sobolev-end}, \eqref{eq:sum-Sobolev-core}, and
\eqref{eq:compact-poincare-sum}, and then summing the finitely many pieces,
proves \eqref{eq:sum-p-Sobolev}.

% For the Nash assertion, let $\lambda_1(U)$ denote the first Dirichlet
% eigenvalue of a relatively compact open set $U$.
% The model Nash inequality and
% $\|h\|_1^2\leq\operatorname{vol}(U)\|h\|_2^2$ give
% \[
%  \lambda_1(U)\geq
%  \frac{c_j}{\operatorname{vol}(U)},
%  \qquad U\Subset M_j.
% \]
% Thus $M_j$ admits the Faber--Krahn function $\Lambda_j(v)=c_jv^{-1}$.
% By \cite[Theorem~3.4]{GrigoryanSaloffCoste16}, there exist
% $c>0$ and $Q>1$ such that $M$ admits the Faber--Krahn function
% \[
%  \Lambda(v)=c\min_{1\leq j\leq\ell}\Lambda_j(Qv)
%  =c'v^{-1}.
% \]
% The equivalence between this Faber--Krahn inequality and the Nash
% inequality, recalled in \cite[Section~2.1]{GrigoryanSaloffCoste16},
% proves \eqref{eq:sum-Nash-two}.

\end{proof}

Under either alternative in Theorem~\ref{thm:intro-connected}, this proposition applies
end by end.  Indeed, the $L^2$ Sobolev or Nash inequality on $M_j$ implies the
fractional-integration estimate
\[
 \norm{\Delta_j^{-1/2}g}{L^{p^*}(M_j)}
 \leq C_j\norm g{L^p(M_j)},
 \qquad 1<p<\nu,
\]
by the heat-semigroup proof of the Hardy--Littlewood--Sobolev theorem
\cite{Varopoulos85}.  Hence the assumed reverse inequality on $M_j$ yields
\[
 \norm h{L^{p^*}(M_j)}
 =\norm{\Delta_j^{-1/2}\Delta_j^{1/2}h}{L^{p^*}(M_j)}
 \leq C_j\norm{\Delta_j^{1/2}h}{L^p(M_j)}
 \leq C_j\norm{dh}{L^p(M_j)}.
\]
The spectral identity in the first equality follows from the triviality of
the $L^2$ kernel, which is a consequence of the $L^2$ Sobolev inequality.
Thus \eqref{eq:model-p-Sobolev} holds for every end model, with a common
exponent and, after taking a maximum, a common constant.
In particular, applying Proposition~\ref{prop:connected-Sobolev} with $p=2$
directly to the assumed $L^2$ inequalities shows that $M$ satisfies the
$L^2$ Sobolev inequality with dimension $\nu$; hence
Lemma~\ref{lem:connected-poisson-smoothing} applies to the global Laplacian
on $M$.
In particular,
\[
 \ker_{L^2}(\Delta_j)=\ker_{L^2}(\Delta)=\{0\},
 \qquad 1\leq j\leq\ell,
\]
while the bounded-core operator $\Delta_0$ has a spectral gap.  We use these
facts below without further comment.

\subsection{Proof of Theorem~\ref{thm:intro-connected}}

\begin{proof}[Proof of Theorem~\ref{thm:intro-connected}]
Fix $1<p<\nu$, let $f,u\in C_c^\infty(M)$, and normalize
$\|u\|_{p'}\leq1$.  It is enough to estimate
\[
 \mathcal B(f,u)=\ip{\Delta^{1/2}f}{u}.
\]
We obtain the cancellation on a finite cylinder.  Put
\[
 X_s=e^{-s\sqrt\Delta}f,\qquad Y_s=\mathcal S_su,\qquad
 E_su=\sum_{j=0}^{\ell}E_{j,s}u,
\]
and define
\[
 W(s)=\ip{X_s}{\partial_sY_s}-\ip{\partial_sX_s}{Y_s}.
\]
Fix \(0<\varepsilon<R<\infty\). Since
\[
 \partial_s^2X_s=\Delta X_s,\qquad
 \partial_s^2Y_s=\Delta Y_s-E_su
\]
in the form sense, the product rule yields
\[
 W'(s)=-\ip{X_s}{E_su}.
\]
Thus
\begin{equation}\label{eq:connected-truncated-Wronskian}
 W(R)-W(\varepsilon)
 =-\int_\varepsilon^R\ip{e^{-s\sqrt\Delta}f}{E_su}\,\dd s.
\end{equation}
The heat-semigroup bounds established above imply that the end Laplacians and $\Delta$ have trivial $L^2$ kernels; the bounded-core operator has a spectral gap.
The spectral theorem therefore implies that as $R\to \infty$,
\[
 X_R,\ \partial_RX_R\longrightarrow0\quad\text{in }L^2(M),
\]
and, for every \(j\), both
\(\phi_je^{-R\sqrt{\Delta_j}}(\rho_ju)\) and its \(R\)-derivative tend
to zero in \(L^2(M)\).  Hence \(W(R)\to0\).  At the lower endpoint,
strong form-domain continuity yields that as $\varepsilon \to 0$,
\[
 X_\varepsilon\to f,\qquad
 \partial_\varepsilon X_\varepsilon\to-\Delta^{1/2}f,\qquad
 Y_\varepsilon\to u,\qquad
 \partial_\varepsilon Y_\varepsilon
 \to-\sum_{j=0}^{\ell}\phi_j\Delta_j^{1/2}(\rho_ju)
\]
in \(L^2\).  Consequently
\[
 W(0)
 =-\sum_{j=0}^{\ell}\ip f{\phi_j\Delta_j^{1/2}(\rho_ju)}
   +\ip{\Delta^{1/2}f}{u}.
\]
The short and large time estimates below show that the integral in
\eqref{eq:connected-truncated-Wronskian} converges as
\(\varepsilon\downarrow0\) and \(R\uparrow\infty\).  Passing to those
limits yields
\begin{equation}\label{eq:connected-main-identity}
 \mathcal B(f,u)
 =\sum_{j=0}^{\ell}\ip f{\phi_j\Delta_j^{1/2}(\rho_ju)}
 +\sum_{j=0}^{\ell}\int_0^\infty
 \ip{e^{-s\sqrt\Delta}f}{E_{j,s}u}\,\dd s.
\end{equation}
% The identity has the following familiar parametrix interpretation.  At the
% level of gradient pairings, the formal formula to keep in mind is
% \[
%  d\Delta^{-1/2}u
%  \overset{\mathrm{formal}}{=}
%  \sum_{j=0}^{\ell}
%    d\bigl(\phi_j\Delta_j^{-1/2}(\rho_ju)\bigr)
%    -\sum_{j=0}^{\ell}dg_j .
% \]
% Here \(g_j\) is the Green correction for the cylinder error \(E_{j,s}u\).
% In the full range \(1<p<\nu\), we do not need a separately convergent
% scalar formula for \(g_j\): the finite-cylinder identity
% \eqref{eq:connected-truncated-Wronskian} defines precisely the pairing of
% \(dg_j\) with \(df\), and its limit is the last term in
% \eqref{eq:connected-main-identity}.  Thus
% \eqref{eq:connected-main-identity} is the rigorous realization of this gradient parametrix.
We estimate the two model terms.  Choose
$\widetilde\phi_0\in C_c^\infty(M)$ equal to one on $\supp\phi_0$.  The
compactly supported operator
$\widetilde\phi_0d\Delta_0^{-1/2}\rho_0$ is of order zero and hence is
bounded on every $L^q$, $1<q<\infty$.  Indeed, both cutoffs lie away from
the artificial boundary.  On this interior region the short-time
Dirichlet heat-kernel gradient has the standard Calder\'on--Zygmund size
and smoothness bounds, while the spectral gap makes the large-time part
smooth and exponentially decaying.  Integration of the heat representation
of \(\Delta_0^{-1/2}\) proves the localized bound.  Therefore
\begin{align}
 \abs{\ip f{\phi_0\Delta_0^{1/2}(\rho_0u)}}
 &=\abs{\ip{d(\phi_0f)}
 {\widetilde\phi_0d\Delta_0^{-1/2}(\rho_0u)}}\notag\\
 &\leq C\bigl(\norm{df}{p}+\norm f{L^p(\widetilde K)}\bigr)\norm u{p'}
 \leq C\norm{df}{p}\norm u{p'},
 \label{eq:connected-core-model}
\end{align}
where the last inequality is \eqref{eq:compact-poincare-sum}.  For every
end $1\leq j\leq\ell$, restriction to \(M_j\), self-adjointness,
and the model reverse inequality give
\begin{align}
 \abs{\ip f{\phi_j\Delta_j^{1/2}(\rho_ju)}}
 &=\abs{\ip{\Delta_j^{1/2}(\phi_jf)}{\rho_ju}_{M_j}}\notag\\
 &\leq C\norm{d(\phi_jf)}p\norm u{p'}
 \leq C\norm{df}p\norm u{p'}.
 \label{eq:connected-end-model}
\end{align}
The last step again uses \eqref{eq:compact-poincare-sum} on the support of
$d\phi_j$.

It remains to estimate the error integral in
\eqref{eq:connected-main-identity}.  For $0<s\leq1$, the Poisson semigroup
is contractive on $L^{p^*}$.  The Sobolev inequality on $M$ and the fixed
compact support of $E_{j,s}u$ therefore yield
\begin{align}
 \abs{\ip{e^{-s\sqrt\Delta}f}{E_{j,s}u}}
 &\leq
 \norm{e^{-s\sqrt\Delta}f}{L^{p^*}(M)}
 \norm{E_{j,s}u}{L^{(p^*)'}(M)}\notag\\
 &\leq C\norm f{p^*}\norm u{p'}
 \leq C\norm{df}p\norm u{p'}.
 \label{eq:connected-small-error}
\end{align}
For $s>1$ and $\nu>2$, combine
Lemma~\ref{lem:connected-poisson-smoothing},
Lemma~\ref{lem:connected-error-bound}, and
\eqref{eq:sum-p-Sobolev}:
\begin{align}
 \bigl|\langle e^{-s\sqrt\Delta}f,E_{j,s}u\rangle\bigr|
 &\leq
 \|e^{-s\sqrt\Delta}f\|_\infty\|E_{j,s}u\|_1
 \notag\\
 &\leq
 Cs^{-\nu/p^*-\nu/p'}\|f\|_{p^*}\|u\|_{p'}
 \leq Cs^{1-\nu}\|df\|_p\|u\|_{p'}.
 \label{eq:connected-large-error}
\end{align}
Here $\nu/p^*+\nu/p'=\nu-1$, and
$\int_1^\infty s^{1-\nu}\,ds<\infty$.

To this end, we assume $\nu=2$.
Fix $1<p<2$ and put $p^*=2p/(2-p)$.
By \cite[Theorem~4.3]{BCLS95} and
\eqref{eq:sum-p-Sobolev}, we get the following sharp Lorentz estimate
\begin{equation}\label{eq:connected-critical-Lorentz-Sobolev}
 \|f\|_{L^{p^*,p}(M)}\leq C\|df\|_{L^p(M)}.
\end{equation}
We use the following consequence of interpolation. Let $Y$ be a Banach space and suppose that a family of linear operators
$T_s$, $s\geq1$, satisfies
\[
 \|T_s\|_{L^1(M)\to Y}\leq Cs^{-2},
 \qquad
 \|T_s\|_{L^\infty(M)\to Y}\leq C.
\]
For every decomposition $h=h_1+h_\infty$, these bounds imply
\[
 \|T_sh\|_Y
 \leq C\bigl(s^{-2}\|h_1\|_1+\|h_\infty\|_\infty\bigr).
\]
Define real-interpolation functional (see \cite[Chapter~3,~5]{BerghLofstrom}) $K(t,h;L^1,L^\infty):= \inf_{h=h_1+h_\infty} \left(\|h_1\|_1+t\|h_\infty\|_\infty\right)$. Taking the infimum gives
\[
 \|T_sh\|_Y
 \leq Cs^{-2}K(s^2,h;L^1(M),L^\infty(M)).
\]
Since
$(L^1,L^\infty)_{1-1/r,b}=L^{r,b}$, the change of variables $t=s^2$
in the real-interpolation norm yields
\begin{equation}\label{eq:connected-critical-time-interpolation}
 \left(
 \int_1^\infty
 \bigl[s^{2/r}\|T_sh\|_Y\bigr]^b\,\frac{ds}{s}
 \right)^{1/b}
 \leq C_{r,b}\|h\|_{L^{r,b}(M)},
 \qquad 1<r<\infty,\quad 1\leq b<\infty.
\end{equation}
Apply \eqref{eq:connected-critical-time-interpolation} first to
$T_s=e^{-s\sqrt\Delta}$ with $Y=L^\infty(M)$.
Its endpoint bounds follow from
Lemma~\ref{lem:connected-poisson-smoothing} and contractivity.
With $(r,b)=(p^*,p)$, we obtain
\[
 \left(
 \int_1^\infty
 \bigl[s^{2/p^*}\|e^{-s\sqrt\Delta}f\|_\infty\bigr]^p
 \,\frac{ds}{s}
 \right)^{1/p}
 \leq C\|f\|_{L^{p^*,p}(M)}
 \leq C\|df\|_p.
\]
Next apply the same estimate to $T_s=E_{j,s}$ with $Y=L^1(M)$.
Lemma~\ref{lem:connected-error-bound}, at $a=1$ and $a=\infty$,
supplies the two endpoint bounds.
Taking $r=b=p'$ and using $L^{p',p'}=L^{p'}$ gives
\[
 \left(
 \int_1^\infty
 \bigl[s^{2/p'}\|E_{j,s}u\|_1\bigr]^{p'}
 \,\frac{ds}{s}
 \right)^{1/p'}
 \leq C\|u\|_{p'}.
\]
Since $2/p^* + 2/p'=1$, H\"older's inequality with respect to $ds/s$ now concludes
\begin{align*}
 &\int_1^\infty
 \bigl|\langle e^{-s\sqrt\Delta}f,E_{j,s}u\rangle\bigr|\,ds
 \notag\\
 &\quad\leq
 \int_1^\infty
 \bigl[s^{2/p^*}\|e^{-s\sqrt\Delta}f\|_\infty\bigr]
 \bigl[s^{2/p'}\|E_{j,s}u\|_1\bigr]\,\frac{ds}{s} \leq C\|df\|_p\|u\|_{p'}.
\end{align*}
Combining \eqref{eq:connected-core-model},
\eqref{eq:connected-end-model}, and
\eqref{eq:connected-small-error} with the corresponding large-time
estimate verifies
\[
 |\mathcal B(f,u)|\leq C\|df\|_p\|u\|_{p'}.
\]
Taking the $L^{p'}$ supremum proves \eqref{eq:RR-p} on $M$.
\end{proof}

\begin{remark}
The exponent $\nu=2$ is critical for the large-time error estimate
in this proof. Although the Nash-to-heat-kernel implication remains
valid for $\nu>0$, formally repeating the interpolation argument with
$1<p<\nu$ and $p^*=\nu p/(\nu-p)$ gives
\[
 \frac{\nu}{p^*}+\frac{\nu}{p'}=\nu-1,
\]
leaving the factor $s^{2-\nu}$ in the error pairing with respect to
$ds/s$. The Lorentz refinement therefore closes the argument at
$\nu=2$, but does not control this growing factor when $1<\nu<2$.
\end{remark}

\section{Stability on exterior Lipschitz domains}
\label{sec:exterior-reverse}

Let $\Omega\subset\R^n$ be an exterior Lipschitz domain and let
$A\colon\R^n\to\R^{n\times n}$ be real, symmetric, bounded, and uniformly
elliptic:
\[
 \lambda\abs{\xi}^2\leq A(x)\xi\mathbin{\cdo}\xi
 \leq\Lambda\abs\xi^2,
 \qquad \xi\in\R^n,\quad\text{for a.e. }x\in\R^n.
\]
The whole-space realization of $-\diver(A\nabla)$ is denoted by $L$.
On $\Omega$, let $L_D$ and $L_N$ be associated with
\begin{align*}
	\mathfrak q_D(f,g)&=\int_\Omega A\nabla f\mathbin{\cdo}\nabla\overline g,
&D(\mathfrak q_D)&=W_0^{1,2}(\Omega),\\
 \mathfrak q_N(f,g)&=\int_\Omega A\nabla f\mathbin{\cdo}\nabla\overline g,
&D(\mathfrak q_N)&=W^{1,2}(\Omega).
\end{align*}

\begin{remark}\label{rem:Sobolev-form-core}
For $1\le r<\infty$, we use the following conventions:
\[
 C_c^\infty(\overline\Omega)
 :=\{F|_\Omega:F\in C_c^\infty(\R^n)\},
\]
and
\[
 W_0^{1,r}(\Omega)
 =\overline{C_c^\infty(\Omega)}^{\,\|\cdot\|_{W^{1,r}(\Omega)}},
 \qquad
 W^{1,r}(\Omega)
 =\overline{C_c^\infty(\overline\Omega)}
 ^{\,\|\cdot\|_{W^{1,r}(\Omega)}}.
\]
Since $\Omega$ is Lipschitz, the latter space agrees with the usual
distributional Sobolev space.  Thus $C_c^\infty(\Omega)$ is a form core
for $L_D$, whereas $C_c^\infty(\overline\Omega)$ is a form core for $L_N$.

More generally, if $T$ is a nonnegative self-adjoint operator with closed
quadratic form $\mathfrak q_T$, a \emph{form core} for $T$ is a subspace of
$D(\mathfrak q_T)=D(T^{1/2})$ that is dense for the form
norm
\[
 \|v\|_{\mathfrak q_T}
 :=\bigl(\|v\|_2^2+\mathfrak q_T(v,v)\bigr)^{1/2}.
\]
In particular, $C_c^\infty(\R^n)$ is a form core for $L$,
and the analogous Dirichlet and Neumann test classes on $\Omega_c$ are
form cores for the bounded-core realizations.
\end{remark}

Since both \(\Omega\) and \(\R^n\) have infinite measure, the energy identity also
shows once and for all that
\begin{equation}\label{eq:exterior-L2-kernels}
 \ker_{L^2}L=\ker_{L^2}L_D=\ker_{L^2}L_N=\{0\}.
\end{equation}
Indeed, a form-kernel element has zero gradient and is therefore constant;
the Dirichlet trace removes that constant, and no nonzero constant belongs
to $L^2(\Omega)$ or $L^2(\mathbb R^n)$.

We next fix the kernel notation used throughout the paper.  Let
$h_t(x,y)$ and $h_t^B(x,y)$ be the integral kernels of $e^{-tL}$ and
$e^{-tL_B}$, respectively, and let $p_s(x,y)$ and $p_s^B(x,y)$ be the
kernels of $e^{-s\sqrt L}$ and $e^{-s\sqrt{L_B}}$, where
$B\in\{D,N\}$.  Thus
\[
 p_s(x,y)=\frac{s}{2\sqrt\pi}\int_0^\infty
 e^{-s^2/(4t)}h_t(x,y)\,\frac{\dd t}{t^{3/2}},
\]
and the same formula holds with $h_t,p_s$ replaced by $h_t^B,p_s^B$.
For the shifted bounded-core operator $H_{c,B}=1+L_{c,B}$, introduced
below, we use the notation $h_t^{c,B}$ and $p_s^{c,B}$.

The Gaussian estimates for $h_t$ and $h_t^D$ are standard; the Neumann
estimate follows from the inner-uniform geometry of exterior Lipschitz
domains~\cite{GyryaSaloffCoste}.  Semigroup analyticity and the
complex-time Gaussian argument~\cite{CoulhonSikora08} imply, for every
integer $m\ge0$,
\begin{align*}
 |\partial_t^m h_t(x,y)|
 &\le C_m t^{-n/2-m}
       \exp\!\left(-\frac{|x-y|^2}{c_mt}\right),
 &&x,y\in\R^n,\\
 |\partial_t^m h_t^B(x,y)|
 &\le C_m t^{-n/2-m}
       \exp\!\left(-\frac{|x-y|^2}{c_mt}\right),
 &&x,y\in\Omega.
\end{align*}
For the shifted core kernels one likewise has
\[
 |\partial_t^m h_t^{c,B}(x,y)|
 \le C_m t^{-n/2-m}e^{-c_mt}
       \exp\!\left(-\frac{|x-y|^2}{c_mt}\right),
 \qquad x,y\in\Omega_c.
\]
Differentiating the subordination formula yields
\begin{align*}
 |\partial_s^{2m}p_s(x,y)|
 &\le C_m\frac{s}{(s+|x-y|)^{n+2m+1}},&
 |\partial_s^{2m+1}p_s(x,y)|
 &\le C_m\frac{1}{(s+|x-y|)^{n+2m+1}},\\
 |\partial_s^{2m}p_s^B(x,y)|
 &\le C_m\frac{s}{(s+|x-y|)^{n+2m+1}},&
 |\partial_s^{2m+1}p_s^B(x,y)|
 &\le C_m\frac{1}{(s+|x-y|)^{n+2m+1}}.
\end{align*}
For the shifted core kernels, the same estimates hold for $0<s\le1$;
the spectral gap yields
\[
 |\partial_s^m p_s^{c,B}(x,y)|\le C_m e^{-c s},
  \qquad s\ge1,\quad x,y\in\Omega_c.
\]
We shall use the following four standard consequences.

\begin{enumerate}[label=\textup{(\roman*)},leftmargin=*]
\item For $B\in\{D,N\}$, the low-exponent Gaussian-semigroup theory
\cite{CoulhonDuong,Sikora} implies
\begin{equation}\label{eq:exterior-low-Riesz}
 \norm{\nabla L_B^{-1/2}u}{L^r(\Omega)}
 \leq C_r\norm u{L^r(\Omega)},\qquad 1<r\leq2,
\end{equation}
\item The whole-space reverse square-root estimate of
Auscher--Tchamitchian~\cite{AuscherTchamitchian98} states that
\begin{equation}\label{eq:whole-space-reverse}
 \norm{L^{1/2}v}{L^r(\R^n)}
 \leq C_r\norm{\nabla v}{L^r(\R^n)},\qquad 1<r<\infty,
\end{equation}
\item The strongly Lipschitz-domain theorem
\cite[Theorem~1]{AuscherTchamitchian01} asserts
\begin{equation}\label{eq:localized-square-root}
 \norm{L_B^{1/2}v}{L^r(\Omega)}
 \leq C_r\bigl(\norm v{L^r(\Omega)}
                    +\norm{\nabla v}{L^r(\Omega)}\bigr),
 \qquad 1<r<\infty.
\end{equation}
Its hypotheses hold here as follows: symmetry and the
Kato theorem supply the required $L^2$ square-root estimate; the real
coefficient heat kernels have the Gaussian and De Giorgi--Nash H\"older
bounds; and the symmetric heat semigroups are contractions on every
$L^r$.  We use \eqref{eq:localized-square-root} only for the compactly
supported pieces at the low exponents \(r<n\) occurring below.  For those
pieces the zeroth-order term is removed by
\eqref{eq:exterior-local-Poincare}.

On a bounded strongly Lipschitz domain $U$, the same theorem and the
bounded functional calculus yield
\begin{equation}\label{eq:core-reverse}
 \norm{(1+L_{U,B})^{1/2}v}{L^r(U)}
 \leq C_r\bigl(\norm v{L^r(U)}+\norm{\nabla v}{L^r(U)}\bigr).
\end{equation}
Here the pure boundary condition $B$ is imposed on all of $\partial U$.
\item For Dirichlet data, zero extension reduces the homogeneous Sobolev
inequality to its Euclidean version.  For Neumann data, extend across the
compact Lipschitz boundary after subtracting the average on one fixed
boundary neighborhood, and then add that constant back.  The local
Lipschitz extension theorem \cite[Chapter~VI, \S3, Theorem~5,
p.~181]{SteinSingular70} and Poincar\'e's inequality produce an extension operator \(E\) satisfying
\(\|\nabla Ef\|_{L^r(\mathbb R^n)}\leq C\|\nabla f\|_{L^r(\Omega)}\).
The Euclidean homogeneous Sobolev inequality therefore implies, for
$1<r<n$ and $r^*=nr/(n-r)$,
\begin{equation}\label{eq:exterior-Sobolev}
 \norm f{L^{r^*}(\Omega)}\leq C_r\norm{\nabla f}{L^r(\Omega)}
\end{equation}
for compactly supported Neumann test functions and for zero-trace
Dirichlet test functions.  In particular, for $1<r<n$ and every fixed
bounded set $K$,
\begin{equation}\label{eq:exterior-local-Poincare}
 \norm f{L^r(K\cap\Omega)}\leq C_{K,r}\norm{\nabla f}{L^r(\Omega)}.
\end{equation}
\end{enumerate}

\subsection{Bounded core and Devyver's modified Poisson parametrix}

Choose a bounded Lipschitz core
$\Omega_c=\Omega\cap B(0,R_c)$, with $R_c>0$ large enough, whose
artificial boundary is smooth.  The form domain of $L_{c,D}$ is
$W_0^{1,2}(\Omega_c)$, whereas that of $L_{c,N}$ is
$W^{1,2}(\Omega_c)$.  The core operator uses the same pure boundary
condition on the physical and artificial components.
Let $L_{c,B}$ be the associated operator and put
\begin{equation}\label{eq:shifted-core}
 H_{c,B}=1+L_{c,B},\qquad B\in\{D,N\}.
\end{equation}
For $B=N$ the unshifted pure core has the constant zero mode; imposing the
same Neumann condition on both boundary components therefore does not by
itself imply a spectral gap.  The shift in \eqref{eq:shifted-core} removes
that mode.  Subordination of
$e^{-tH_{c,B}}=e^{-t}e^{-tL_{c,B}}$ yields
\[
 \norm{e^{-s\sqrt{H_{c,B}}}}{L^r(\Omega_c)\to L^r(\Omega_c)}
 \leq e^{-s},\qquad 1\leq r\leq\infty.
\]

Choose a partition $\rho_c+\rho_e=1$ and cutoffs $\phi_c,\phi_e$ such that
$\phi_j\rho_j=\rho_j$ and
$\dist(\supp\nabla\phi_j,\supp\rho_j)>0$.  The core cutoffs vanish in a
collar of the artificial boundary, while $\phi_e$ vanishes near the physical
boundary and equals one at infinity.  Following Carron~\cite{Carron07},
we define the exterior parametrix by
\begin{equation}\label{eq:exterior-Carron}
 \mathcal E_{B,s}u=
 \phi_ce^{-s\sqrt{H_{c,B}}}(\rho_cu)
 +\phi_ee^{-s\sqrt L}(\rho_eu).
\end{equation}
Here \(\rho_cu\) is restricted to \(\Omega_c\), whereas \(\rho_eu\) is
extended by zero through the obstacle before the whole-space semigroup is
applied.  This zero extension is legitimate because \(\rho_e\) vanishes in
a collar of the physical boundary.  Conversely, \(\phi_c\) vanishes in a
collar of the artificial boundary, so the core solution extends by zero to
\(\Omega\).  The core, exterior and whole-space coefficient fields agree
on every cutoff transition region.

Let $\cA_1,\cA_2$ be connected bounded smooth annuli such that
$\overline{\cA_1}\subset\cA_2$,
$\supp\nabla\phi_e\subset\cA_1$ and
$\overline{\cA_2}\cap\supp\rho_e=\varnothing$.  Fix
$\eta\in C_c^\infty(\cA_2)$, $\eta\geq0$, $\int\eta=1$, and set
\[
 m_s(u)=\int_{\cA_2}\eta(z)e^{-s\sqrt L}(\rho_eu)(z)\,\dd z.
\]
Separation implies $m_0(u)=0$.  Adapting Devyver's
modification~\cite{Devyver15}, we consider
\begin{equation}\label{eq:exterior-Devyver}
 \mathcal S_{N,s}u=
 \mathcal E_{N,s}u-(\phi_e-1)m_s(u).
\end{equation}
Because $\phi_e-1$ is constant near $\partial\Omega$, this correction has
zero conormal derivative.

For a Lipschitz cutoff $\phi$, define the form commutator by
\begin{equation}\label{eq:form-commutator}
 \ip{[L_B,\phi]v}{\psi}
 =\int_\Omega vA\nabla\phi\mathbin{\cdo}\nabla\overline\psi
 -\int_\Omega\overline\psi A\nabla\phi\mathbin{\cdo}\nabla v.
\end{equation}
Our Hilbert pairing is linear in the first variable.  If a form distribution
$F$ occurs in the second variable, we use the anti-dual convention
$\ip{\psi}{F}=\overline{\ip{F}{\psi}}$.
Then, in the form sense,
\begin{align}
 (-\partial_s^2+L_N)\mathcal S_{N,s}u
 ={}&[L_N,\phi_c]e^{-s\sqrt{H_{c,N}}}(\rho_cu)
 -\phi_ce^{-s\sqrt{H_{c,N}}}(\rho_cu)\notag\\
 &+[L_N,\phi_e]\bigl(e^{-s\sqrt L}(\rho_eu)-m_s(u)\bigr)
 +(\phi_e-1)m_s''(u).
 \label{eq:Neumann-error}
\end{align}
To verify \eqref{eq:Neumann-error}, write the two model solutions as
$U_{c,s}=e^{-s\sqrt{H_{c,N}}}(\rho_cu)$ and
$U_{e,s}=e^{-s\sqrt L}(\rho_eu)$.  On the core transition region,
$L_N(\phi_cU_{c,s})=\phi_cL_{c,N}U_{c,s}+[L_N,\phi_c]U_{c,s}$, whereas
$\partial_s^2U_{c,s}=H_{c,N}U_{c,s}$.  Their difference is therefore
$[L_N,\phi_c]U_{c,s}-\phi_cU_{c,s}$.  Similarly, the uncorrected end produces
$[L_N,\phi_e]U_{e,s}$.  Finally, since a spatial constant is annihilated
locally by $L_N$ in the form sense on the cutoff region,
\[
 (-\partial_s^2+L_N)\bigl[-(\phi_e-1)m_s(u)\bigr]
 =-[L_N,\phi_e]m_s(u)+(\phi_e-1)m_s''(u).
\]
Adding these identities proves \eqref{eq:Neumann-error}.  In particular,
the zeroth-order core term is exactly the price of the shift
$H_{c,N}=1+L_{c,N}$.

\subsection{Caccioppoli and Poisson estimates}

We first record the following standard Caccioppoli estimate, used throughout
the paper.  It is valid both in the interior and up to either pure boundary
condition.

\begin{lemma}\label{lem:local-caccioppoli}
Let $U_1,U_2\subset\R^n$ be bounded open sets with
$\overline{U_1}\subset U_2$, and put
\[
 d=\dist(U_1,\R^n\setminus U_2)>0.
\]
Let $B\in\{D,N\}$.  Suppose that
$v\in W^{1,2}(U_2\cap\Omega)$, with zero trace on
$U_2\cap\partial\Omega$ when $B=D$, and that
\[
 \int_{U_2\cap\Omega}A\nabla v\mathbin{\cdo}\nabla\overline\psi
 =\int_{U_2\cap\Omega}f\,\overline\psi
  -\int_{U_2\cap\Omega}G\mathbin{\cdo}\nabla\overline\psi
\]
for every $B$-admissible test function $\psi$ supported in $U_2$.
Here $B$-admissible means $\psi\in W_0^{1,2}(\Omega)$ in the Dirichlet
case and $\psi\in W^{1,2}(\Omega)$ in the Neumann case.  Assume that
\[
 f\in L^2(U_2\cap\Omega),
 \qquad G\in L^2(U_2\cap\Omega;\mathbb C^n).
\]
Then
\begin{equation}\label{eq:local-caccioppoli}
 \norm{\nabla v}{L^2(U_1\cap\Omega)}
 \leq C\left[
 d^{-1}\norm v{L^2(U_2\cap\Omega)}
 +\norm f{L^2(U_2\cap\Omega)}^{1/2}
  \norm v{L^2(U_2\cap\Omega)}^{1/2}
 +\norm G{L^2(U_2\cap\Omega)}
 \right].
\end{equation}
The same estimate holds for the whole-space realization $L$.  It also
holds for $H_{c,B}=1+L_{c,B}$ when $v\in W_0^{1,2}(\Omega_c)$ for $B=D$
and $v\in W^{1,2}(\Omega_c)$ for $B=N$, and the weak
equation is read, for core-admissible tests supported in $U_2$, as
\[
 \int_{\Omega_c}A\nabla v\mathbin{\cdo}\nabla\overline\psi
 +\int_{\Omega_c}v\,\overline\psi
 =\int_{\Omega_c}f\,\overline\psi
 -\int_{\Omega_c}G\mathbin{\cdo}\nabla\overline\psi.
\]
\end{lemma}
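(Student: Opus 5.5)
The plan is the classical Caccioppoli argument: test the weak equation against $\psi=\chi^2 v$, where $\chi$ is a cutoff adapted to the pair $U_1\subset U_2$. Concretely, choose $\chi\in C_c^\infty(U_2)$ with $0\le\chi\le1$, $\chi=1$ on $U_1$ and $|\nabla\chi|\le C/d$; mollifying the indicator of a $d/2$-neighbourhood of $U_1$ gives such a $\chi$ with $C$ depending only on $n$.

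The first step is to check that $\psi=\chi^2v$ is $B$-admissible and supported in $U_2$. In the Neumann case, $\chi^2 v$ extended by zero lies in $W^{1,2}(\Omega)$, since $\chi$ is smooth and vanishes near $\partial U_2$; no trace condition is needed. In the Dirichlet case, $v$ has zero trace on $U_2\cap\partial\Omega$ and $\chi$ cuts off near $\partial U_2$, so $\chi^2v\in W_0^{1,2}(\Omega)$. This is the only place where the boundary condition enters. It is also the one step that needs care at the level of Lipschitz boundaries: one approximates $v$ by admissible smooth functions and uses that multiplication by $\chi$ preserves $W_0^{1,2}$.

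Next, expand $\nabla\psi=\chi^2\nabla v+2\chi v\nabla\chi$ and take real parts. Since $A$ is real symmetric, ellipticity gives
\[
 \lambda\int\chi^2|\nabla v|^2\le \Re\int A\nabla v\cdot\chi^2\nabla\bar v
 =\Re\Bigl[\int f\chi^2\bar v-\int G\cdot\nabla(\chi^2\bar v)-2\int\chi\bar v\,A\nabla v\cdot\nabla\chi\Bigr].
\]
Estimate the three terms on the right as follows:
\begin{itemize}
\item the $f$-term is at most $\|f\|_2\|v\|_2$;
\item the cross term is at most $2\Lambda\|\chi\nabla v\|_2\|v\nabla\chi\|_2$;
\item the $G$-term is at most $\|G\|_2\bigl(\|\chi\nabla v\|_2+2\|v\nabla\chi\|_2\bigr)$.
\end{itemize}
Absorb the $\|\chi\nabla v\|_2$ factors into the left side by Young's inequality, and use $2\|G\|_2\|v\nabla\chi\|\le\|G\|^2+\|v\nabla\chi\|^2$. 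This yields
\[
 \|\chi\nabla v\|_2^2\le C\bigl(d^{-2}\|v\|_2^2+\|f\|_2\|v\|_2+\|G\|_2^2\bigr).
\]
Taking square roots and using $\chi=1$ on $U_1$ gives the stated estimate.

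The remaining realizations follow the same way. For the whole-space operator $L$ the argument is identical, with no boundary at all. For $H_{c,B}$ the weak equation has the extra term $\int v\bar\psi=\int\chi^2|v|^2\ge0$ on the left. Being nonnegative after taking real parts, it can simply be dropped, and the admissibility check is the same within $\Omega_c$. All norms are over $U_2\cap\Omega$ (resp.\ $U_2\cap\Omega_c$), and $C$ depends only on $\lambda,\Lambda$. The main obstacle is therefore only the admissibility of $\chi^2v$ near the Lipschitz boundary in the Dirichlet case; the rest is routine.
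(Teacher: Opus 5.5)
Your proposal is correct and follows essentially the same route as the paper: test the weak equation with $\chi^2 v$ for a cutoff satisfying $|\nabla\chi|\lesssim d^{-1}$, take real parts, absorb the $\|\chi\nabla v\|_2$ terms by Young's inequality to get $\|\chi\nabla v\|_2^2\lesssim d^{-2}\|v\|_2^2+\|f\|_2\|v\|_2+\|G\|_2^2$, and discard the nonnegative zeroth-order term for $H_{c,B}$. Your extra remarks on building the cutoff by mollification and on the admissibility of $\chi^2 v$ in the Dirichlet case only spell out what the paper states in one line.
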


\begin{proof}
The proof is standard and we include it here for the sake of completeness. Choose $\zeta\in C_c^\infty(U_2)$ such that
\[
 0\leq\zeta\leq1,
 \qquad \zeta=1\ \text{on }U_1,
 \qquad |\nabla\zeta|\leq C d^{-1}.
\]
The function $\psi=\zeta^2v$ is an admissible test function.  Indeed, in
the Dirichlet case it has zero trace because $v$ has zero trace; in the Neumann case no trace restriction is
imposed.

Substituting $\psi=\zeta^2v$ into the weak equation and taking real parts,
we obtain
\begin{align*}
 \textrm{Re}\int \zeta^2A\nabla v\mathbin{\cdo}\nabla\overline v
 ={}&-2\textrm{Re}\int
   \zeta\overline v\,A\nabla v\mathbin{\cdo}\nabla\zeta\\
 &+\textrm{Re}\int f\zeta^2\overline v
 -\textrm{Re}\int G\mathbin{\cdo}
 \bigl(\zeta^2\nabla\overline v
       +2\zeta\overline v\nabla\zeta\bigr),
\end{align*}
where all integrals are over $U_2\cap\Omega$.  Set
\[
 X=\|\zeta\nabla v\|_2,\quad
 V=\|v\|_{L^2(U_2\cap\Omega)},\quad
 F=\|f\|_{L^2(U_2\cap\Omega)},\quad
 H=\|G\|_{L^2(U_2\cap\Omega)}.
\]
Ellipticity, the bound on $\nabla\zeta$, and Cauchy--Schwarz imply
\[
 \lambda X^2\leq C d^{-1}XV+FV+HX+C d^{-1}HV.
\]
Young's inequality implies
\[
 C d^{-1}XV\leq\frac{\lambda}{4}X^2+C d^{-2}V^2,
 \qquad
 HX\leq\frac{\lambda}{4}X^2+CH^2,
\]
and
\[
 C d^{-1}HV\leq CH^2+C d^{-2}V^2.
\]
After absorbing the two $X^2$ terms into the left-hand side, we obtain
\[
 X^2\leq C\bigl(d^{-2}V^2+FV+H^2\bigr).
\]
Since $\zeta=1$ on $U_1$, taking square roots proves
\eqref{eq:local-caccioppoli}.

The whole-space proof is identical.  For $H_{c,B}=1+L_{c,B}$, testing
the weak equation produces the additional term
$\int_{\Omega_c}\zeta^2|v|^2$ on the left-hand side.  It is nonnegative
and may be discarded, so the same estimate follows.
\end{proof}

\begin{lemma}\label{lem:m-small}
For $r>1$ and $0<s\leq1$,
\begin{equation}\label{eq:m-small}
 \abs{m_s(u)}+\abs{m_s''(u)}
 \leq Cs\norm u{L^r(\Omega)}.
\end{equation}
Moreover,
\begin{equation}\label{eq:m-prime-zero}
 m_0'(u)=-\ip{\rho_eu}{L^{1/2}\eta}_{\R^n},
 \qquad \abs{m_0'(u)}\leq C_r\norm u{L^r(\Omega)}.
\end{equation}
\end{lemma}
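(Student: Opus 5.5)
The whole proof rests on one structural fact: $\eta$ and $\rho_e$ have separated supports. Indeed, $\supp\eta\subset\cA_2$ and $\overline{\cA_2}\cap\supp\rho_e=\varnothing$, so there is $\delta>0$ with $|z-y|\geq\delta$ whenever $z\in\supp\eta$ and $y\in\supp\rho_e$. Writing $m_s(u)$ through the Poisson kernel gives
\[
 m_s(u)=\iint\eta(z)\,p_s(z,y)\,\rho_e(y)u(y)\,\dd y\,\dd z ,
\]
and the same formula with $\partial_s^k p_s$ gives the derivatives $m_s^{(k)}(u)$. Differentiation under the integral is justified by the pointwise kernel bounds recorded above, since they are uniform on the separated region. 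I will estimate each of these pieces by Hölder's inequality in $y$, using the $L^{r'}$ norm of the kernel restricted to the separated region, uniformly in $z\in\supp\eta$.

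First, the bound for $m_s$ and $m_s''$. The recorded estimates give $|p_s(z,y)|\le Cs(s+|z-y|)^{-n-1}$ and $|\partial_s^2p_s(z,y)|\le Cs(s+|z-y|)^{-n-3}$. On the region $|z-y|\ge\delta$ with $0<s\le1$, both are at most $Cs\,(\delta+|z-y|)^{-n-1}$. This function of $y$ lies in $L^{r'}(\R^n)$ for every $r'\ge1$, because $(n+1)r'>n$. This is exactly where $r>1$ is used, so that $r'<\infty$; in fact $r'=\infty$ would also be fine. Hölder's inequality in $y$, followed by integration against $\eta\in L^1$, then gives
\[
 |m_s(u)|+|m_s''(u)|\le Cs\|\rho_eu\|_{L^r}\le Cs\|u\|_{L^r(\Omega)} .
\]
This proves \eqref{eq:m-small}.

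Next, the formula and bound for $m_0'(u)$. The first-derivative kernel satisfies $|\partial_sp_s(z,y)|\le C(s+|z-y|)^{-n-1}$, which on the separated region is bounded uniformly in $s\in[0,1]$ by an $L^{r'}$-integrable function of $y$. Dominated convergence therefore shows that $m_s'(u)$ is continuous up to $s=0$. Hence
\[
 m_0'(u)=\lim_{s\downarrow0}\iint\eta(z)\,\partial_sp_s(z,y)\,\rho_e(y)u(y)\,\dd y\,\dd z,
\]
and the same Hölder argument gives $|m_0'(u)|\le C_r\|u\|_{L^r(\Omega)}$.

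It remains to identify the limit with $-\ip{\rho_eu}{L^{1/2}\eta}_{\R^n}$. I would first take $u\in L^2\cap L^r$; the general case then follows from the bound just proved, by density. By self-adjointness of the Poisson semigroup,
\[
 m_s(u)=\ip{\rho_eu}{e^{-s\sqrt L}\eta}_{\R^n}.
\]
Since $\eta\in C_c^\infty\subset D(L)\subset D(L^{1/2})$, the $L^2$-valued derivative of $e^{-s\sqrt L}\eta$ at $s=0$ is $-L^{1/2}\eta$. This identifies the one-sided derivative at $0$ as $-\ip{\rho_eu}{L^{1/2}\eta}$, consistent with the kernel computation.

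The main, though mild, obstacle is this identification step at $s=0$: reconciling the kernel-limit representation with the spectral derivative. It is handled by the continuity of $m_s'$ on $[0,1]$ obtained above, together with the $L^2$ differentiability of the semigroup on $D(L^{1/2})$.
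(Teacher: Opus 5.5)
Your proof is correct. For \eqref{eq:m-small}, and for identifying $m_0'(u)$ through $m_s(u)=\ip{\rho_eu}{e^{-s\sqrt L}\eta}_{\R^n}$, the spectral theorem on $D(L^{1/2})$ and density, you follow the paper's argument.

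The genuine difference is in the bound $\abs{m_0'(u)}\leq C_r\norm u{L^r(\Omega)}$. The paper reads it off from the identity itself. It bounds $\norm{L^{1/2}\eta}{L^{r'}(\R^n)}\leq C\norm{\nabla\eta}{L^{r'}(\R^n)}$ by the Auscher--Tchamitchian whole-space reverse inequality \eqref{eq:whole-space-reverse}, and then applies H\"older. You instead use the odd-order off-diagonal bound $\abs{\partial_sp_s(z,y)}\leq C(s+\abs{z-y})^{-n-1}$ on the separated supports and pass to the limit $s\downarrow0$.

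Your route is more elementary. It uses only the kernel bounds already recorded. It shows directly that $u\mapsto\ip{\rho_eu}{L^{1/2}\eta}$ is $L^r$-bounded on $L^2\cap L^r$, so $\rho_eL^{1/2}\eta\in L^{r'}$ and your density step needs no square-root theory. The bound also holds for every $1\leq r\leq\infty$. The paper's route avoids any limit of kernels at $s=0$, at the price of invoking a deep theorem for one fixed smooth function.

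Two small repairs are needed.

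First, $C_c^\infty(\R^n)\subset D(L)$ is false in general for merely bounded measurable $A$, since $\diver(A\nabla\eta)$ need not lie in $L^2$. You only need $\eta\in D(L^{1/2})=W^{1,2}(\R^n)$, which is exactly where $s\mapsto e^{-s\sqrt L}\eta$ is differentiable in $L^2$ at $s=0$. So nothing breaks.

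Second, uniform bounds on $\partial_sp_s$ do not by themselves give the pointwise limit that dominated convergence requires. One way to supply it is the even-order bound $\abs{\partial_s^2p_s(z,y)}\leq Cs\,\delta^{-n-3}$ on the separated region. More simply, use your own estimate $\abs{m_s''(u)}\leq Cs\norm u{L^r}$. It makes $s\mapsto m_s'(u)$ Lipschitz on $(0,1]$, so the limit exists and inherits the uniform bound. Since $m_0(u)=0$ and $\abs{m_s(u)}\leq Cs\norm u{L^r}$, that limit coincides with the one-sided derivative at $0$.
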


\begin{proof}
Let
$d_0=\dist(\supp\eta,\supp\rho_e)>0$.  The heat-kernel Gaussian estimate
and the even-order Poisson-kernel estimates recorded at the beginning of
this section, with $m=0$ and $m=1$, yield the required bounds for $p_s$
and $\partial_s^2p_s$.
If
\[
 k_s(y)=\rho_e(y)\int_{\cA_2}\eta(x)p_s(x,y)\,\dd x,
\]
then, for $0<s\leq1$ and $y\in\supp\rho_e$,
\[
 \abs{k_s(y)}\leq
 Cs\int_{\cA_2}\frac{\eta(x)}{\abs{x-y}^{n+1}}\,\dd x.
\]
The right-hand side, and the analogous expression with power $n+3$, has
$L^{r'}(\dd y)$ norm at most $C_rs$.  Thus H\"older's inequality proves
\eqref{eq:m-small}.

For $u\in L^2\cap L^r$, self-adjointness yields
\[
 m_s(u)=\ip{\rho_eu}{e^{-s\sqrt L}\eta}_{\R^n}.
\]
Since \(\eta\in D(L^{1/2})\), the spectral theorem implies
\(L^{1/2}e^{-s\sqrt L}\eta\to L^{1/2}\eta\) in \(L^2\) as $s\to0$.  Hence
\[
 m_0'(u)
 =-\ip{\rho_eu}{L^{1/2}\eta}_{\R^n}.
\]
Moreover, \eqref{eq:whole-space-reverse} at exponent $r'$ gives $\norm{L^{1/2}\eta}{L^{r'}(\R^n)}\leq C_{r'}\norm{\nabla\eta}{L^{r'}(\R^n)}<\infty$, so the second estimate in \eqref{eq:m-prime-zero} follows by again H\"older's inequality.  Density removes the temporary $L^2$
assumption.
\end{proof}

\begin{proposition}\label{prop:local-Poisson-estimates}
The following estimates hold.
\begin{enumerate}[label=\textup{(\roman*)}]
\item If $r>2$, there is $\varepsilon\in(0,1)$ such that, for $s\geq1$,
\begin{align}
 \norm{e^{-s\sqrt L}(\rho_eu)-m_s(u)}{W^{1,2}(\cA_1)}
 &\leq Cs^{-n/r-\varepsilon}\norm u{L^r(\Omega)},
 \label{eq:centered-W12}\\
 \abs{m_s''(u)}
 &\leq Cs^{-2-n/r}\norm u{L^r(\Omega)}.
 \label{eq:m-second-large}
\end{align}
\item Let $B\in\{D,N\}$, let $K\subset\R^n$ be bounded and open, and let
$1<r<\infty$.  For $s\geq1$,
\begin{equation}\label{eq:domain-local}
 \norm{e^{-s\sqrt{L_B}}f}{W^{1,2}(K\cap\Omega)}
 \leq C_{K,r}s^{-n/r}\norm f{L^r(\Omega)}.
\end{equation}
If $K$ and $\supp f$ are separated, then for $0<s\leq1$,
\begin{equation}\label{eq:domain-small-off}
 \norm{e^{-s\sqrt{L_B}}f}{W^{1,2}(K\cap\Omega)}
 \leq C_{K,r}s\norm f{L^r(\Omega)}.
\end{equation}
\end{enumerate}
\end{proposition}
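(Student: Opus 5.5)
The proposition has two parts: centered whole-space estimates on the annulus, and localized estimates for the exterior semigroups. Both rest on pointwise Poisson-kernel bounds, upgraded to $W^{1,2}$ by the Caccioppoli inequality, Lemma~\ref{lem:local-caccioppoli}.

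\textbf{Part (ii), estimate \eqref{eq:domain-local}.} Put $v_s=e^{-s\sqrt{L_B}}f$. On the cylinder $(0,\infty)\times\Omega$ the function $v_s$ solves $(-\partial_s^2+L_B)v_s=0$. Hence, for each fixed $s$, $v_s$ is a weak solution of $L_Bv_s=\partial_s^2v_s$ with the $B$ boundary condition. Lemma~\ref{lem:local-caccioppoli} with $G=0$ and $f\mapsto\partial_s^2v_s$, applied on $K\subset K'$, gives
\[
\|\nabla v_s\|_{L^2(K\cap\Omega)}\lesssim \|v_s\|_{L^2(K'\cap\Omega)}+\|\partial_s^2v_s\|_{L^2(K'\cap\Omega)}^{1/2}\|v_s\|_{L^2(K'\cap\Omega)}^{1/2}.
\]
From the kernel bounds $|p_s^B(x,y)|\lesssim s(s+|x-y|)^{-n-1}$ and $|\partial_s^2p_s^B|\lesssim s(s+|x-y|)^{-n-3}$, Hölder's inequality in $y$ gives, uniformly for $x\in K'$,
\[
|v_s(x)|\lesssim s\,\|(s+|\cdot|)^{-n-1}\|_{r'}\|f\|_r\simeq s^{-n/r}\|f\|_r,
\qquad
|\partial_s^2v_s(x)|\lesssim s^{-2-n/r}\|f\|_r .
\]
Since $K'$ is bounded, both $L^2(K')$ norms obey the same bounds, and \eqref{eq:domain-local} follows for $s\ge1$.

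\textbf{Part (ii), estimate \eqref{eq:domain-small-off}.} When $K'$ is separated from $\supp f$, one has $|x-y|\ge d_0$ for $x\in K'$ and $y\in\supp f$. The same kernel bounds then give $|v_s|+|\partial_s^2v_s|\lesssim s\|f\|_r$ on $K'$ for $0<s\le1$, and Caccioppoli again yields \eqref{eq:domain-small-off}.

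\textbf{Part (i), estimate \eqref{eq:m-second-large}.} This is the same computation with $p_s$ and $\eta$, since
\[
m_s''(u)=\int\eta\,\partial_s^2e^{-s\sqrt L}(\rho_eu),
\qquad |\partial_s^2p_s|\lesssim s(s+|x-y|)^{-n-3}.
\]
Hölder's inequality gives $s\cdot s^{-n-3+n/r'}=s^{-2-n/r}$.

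\textbf{Part (i), estimate \eqref{eq:centered-W12}: the main obstacle.} Put $w_s=e^{-s\sqrt L}(\rho_eu)-m_s(u)$. The gradient of $w_s$ is that of $e^{-s\sqrt L}(\rho_eu)$, so Caccioppoli applied on $\cA_1\subset\cA_2$, together with the $\partial_s^2$ bound, controls $\|\nabla w_s\|_{L^2(\cA_1)}$ by $s^{-n/r-\varepsilon}$ plus lower-order contributions. These include a cross term of size $(s^{-n/r}\cdot s^{-2-n/r})^{1/2}=s^{-1-n/r}$, which is acceptable.

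The difficulty is the zeroth-order part: we need $\|w_s\|_{L^2(\cA_2)}\lesssim s^{-n/r-\varepsilon}$, whereas the uncentered bound is only $s^{-n/r}$. I would obtain the extra decay from De Giorgi--Nash--Moser Hölder regularity of the $L$-harmonic-in-$(s,x)$ function $V(t,x)=e^{-t\sqrt L}(\rho_eu)(x)$ on $(0,\infty)\times\R^n$. The function $V$ solves a real symmetric uniformly elliptic equation in $n+1$ variables, namely $-\partial_t^2V+LV=0$, which is away from $\supp\rho_e$ for $t\ge1$. On the cylinder $Q=(s/2,3s/2)\times B(0,s/2)$ we have $\sup_Q|V|\lesssim s^{-n/r}\|u\|_r$ by the pointwise bound above. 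The interior Hölder estimate then gives
\[
\operatorname{osc}_{\{s\}\times\cA_2}V\lesssim (R/s)^{\alpha}\sup_Q|V|,
\]
with $R=\operatorname{diam}\cA_2$. Since $m_s(u)$ is an $\eta$-average of $V(s,\cdot)$ over $\cA_2$, it follows that $|w_s|\le\operatorname{osc}$ on $\cA_2$. This gives $\|w_s\|_{L^2(\cA_2)}\lesssim s^{-n/r-\alpha}\|u\|_r$, and we take $\varepsilon=\alpha$, shrinking it to lie in $(0,1)$. The connectedness of the annuli is not even needed for the oscillation bound, but it is what makes $m_s$ a genuine centering.

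The restriction $r>2$ presumably enters only to keep the exponents compatible with the $L^2$ Caccioppoli input, and I would check that no further condition is hidden there. Finally, $W^{1,2}$ norms on $\cA_1$ are obtained by combining the zeroth-order and gradient bounds.
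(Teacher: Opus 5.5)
Your proposal is correct. For part (ii) and for \eqref{eq:m-second-large} it is the paper's argument: size bounds for $p_s^B$ and $\partial_s^2p_s^B$, H\"older's inequality in $y$, and Lemma~\ref{lem:local-caccioppoli} applied to $L_Bv_s=\partial_s^2v_s$.

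The only real difference is the centering estimate for $w_s$. The paper first proves a spatial H\"older modulus for the Poisson kernel itself,
\[
 |p_s(x,y)-p_s(z,y)|\lesssim|x-z|^{\varepsilon}\,\frac{s}{(s+\dist(y,\cA_2))^{n+1+\varepsilon}},\qquad x,z\in\cA_2,
\]
by inserting De Giorgi--Nash H\"older bounds for the heat kernel into the subordination formula. It then averages in $z$ against $\eta$ and applies H\"older's inequality in $y$.

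You instead apply the elliptic De Giorgi--Nash--Moser estimate once, in $n+1$ variables, to $V(t,x)=e^{-t\sqrt L}(\rho_eu)(x)$. This function solves $-\partial_t^2V+LV=0$ on $(0,\infty)\times\R^n$, with real symmetric, uniformly elliptic coefficients $\mathrm{diag}(1,A)$. You work on a cylinder of size comparable to $s$ and use only the size bound $\sup_Q|V|\lesssim s^{-n/r}\|u\|_r$. Your route is more economical: it needs neither heat-kernel H\"older regularity nor a subordination integral. It also makes transparent that the gain $s^{-\alpha}$ is the ratio of the fixed annulus to the height $s$. The paper's route gives slightly more, namely a kernel-level H\"older bound with explicit off-diagonal decay in $\dist(y,\cA_2)$.

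A few points you should make explicit:
\begin{enumerate}
\item Apply Caccioppoli to $w_s$ itself, using $Lw_s=\partial_s^2V(s,\cdot)$ (since $L$ annihilates constants). The leading term is then $\|w_s\|_{L^2(\cA_2)}$ and not the uncentered norm.
\item For $1\le s\le s_0$, before $\{s\}\times\cA_2$ sits well inside your cylinder, the trivial bound $|w_s|\le2\sup|V(s,\cdot)|$ suffices.
\item The weak-solution property of $V$ and $v_s$ for general $L^r$ data should be obtained by approximation from $L^2\cap L^r$, as in the first paragraph of the paper's proof.
\end{enumerate}
Your suspicion about $r>2$ is right: neither argument uses it, and it only reflects the later application $r=p'$.
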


\begin{proof}
We first justify the use of Caccioppoli for arbitrary \(L^r\) data.
Approximate general \(L^r\) data in \(L^r\) by functions in $L^2\cap L^r$.
The displayed local \(L^2\)
kernel estimates, applied to differences, and
Lemma~\ref{lem:local-caccioppoli} show convergence in \(W^{1,2}\) on a
slightly smaller set.  The limits are the kernel-defined Poisson orbits,
so all estimates pass to general \(L^r\) data.  We use the same
approximation below when invoking the whole-space version of the
off-diagonal estimate.

Let $p_s(x,y)$ be the whole-space Poisson kernel.  De Giorgi--Nash
regularity and subordination yield, for some $\alpha\in(0,1)$ and every
$0<\varepsilon<\alpha$,
\[
 \abs{p_s(x,y)-p_s(z,y)}
 \leq C\abs{x-z}^{\varepsilon}
 \frac{s}{(s+\dist(y,\cA_2))^{n+1+\varepsilon}},
 \qquad x,z\in\cA_2.
\]
Averaging in $z$ against $\eta$ and applying H\"older's inequality yields
\[
 \norm{w_s}{L^2(\cA_2)}
 \leq Cs^{-n/r-\varepsilon}\norm u{L^r},
 \qquad
 w_s=e^{-s\sqrt L}(\rho_eu)-m_s(u).
\]
The even-order estimate with $m=1$ similarly yields
\[
 \norm{Lw_s}{L^2(\cA_2)}
 \leq Cs^{-2-n/r}\norm u{L^r}.
\]
Lemma~\ref{lem:local-caccioppoli}, applied with
$U_1=\cA_1$ and $U_2=\cA_2$, now proves \eqref{eq:centered-W12} (after decreasing
$\varepsilon$ if necessary).  Averaging the differentiated kernel against
$\eta$ proves \eqref{eq:m-second-large}.

For the second part, use the corresponding even-order bounds for
$p_s^B$ and $\partial_s^2p_s^B$ recorded above.  Choose bounded open sets
$K\subset K_1$ and $\overline{K_1}\subset K_2$.  On $K_2\cap\Omega$, H\"older's inequality
yields, for $s\geq1$,
\[
 \norm{e^{-s\sqrt{L_B}}f}{L^2(K_2\cap\Omega)}\leq Cs^{-n/r}\norm f{L^r},
 \qquad
 \norm{L_Bv_s}{L^2(K_2\cap\Omega)}\leq Cs^{-2-n/r}\norm f{L^r}.
\]
Lemma~\ref{lem:local-caccioppoli}, with $U_1=K_1$ and $U_2=K_2$, proves
\eqref{eq:domain-local}.  If $K$ and $\supp f$ are separated, the sets can
be chosen so that $K_2$ and $\supp f$ are separated.  The same two kernel bounds are
$O(s)$ in the relevant $L^{r'}$ norms for $0<s\leq1$.  A second
application of Lemma~\ref{lem:local-caccioppoli} proves
\eqref{eq:domain-small-off}.
\end{proof}

\subsection{Proof of Theorem~\ref{thm:intro-neumann-reverse}}

We first use a Wronskian identity that avoids applying $L_N^{-1/2}$ to a
form-valued error.

\begin{lemma}\label{lem:Wronskian}
For $j=1,2$, let $T_j$ be a nonnegative self-adjoint operator on a Hilbert
space $\mathcal H_j$.  Let $\theta:\mathcal H_2\to\mathcal H_1$ be bounded
and map $D(T_2^{1/2})$ continuously into $D(T_1^{1/2})$.  For
$a\in\mathcal H_1$ and $b\in\mathcal H_2$, set, for $s>0$,
\[
 x_s=e^{-sT_1^{1/2}}a,
 \qquad y_s=e^{-sT_2^{1/2}}b,
\]
and
\[
 D(s)=\ip{T_1^{1/2}x_s}{\theta y_s}
      -\ip{x_s}{\theta T_2^{1/2}y_s}.
\]
The expression on the right of the identity below is a form-dual pairing:
\[
 \ip{x}{(T_1\theta-\theta T_2)y}
 :=\mathfrak q_{T_1}(x,\theta y)-\ip{x}{\theta T_2y}.
\]
Then, in the form sense,
\begin{equation}\label{eq:Wronskian-derivative}
 D'(s)=-\ip{x_s}{(T_1\theta-\theta T_2)y_s}.
\end{equation}
\end{lemma}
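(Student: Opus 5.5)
The plan is a direct differentiation of the two terms of $D(s)$, using the spectral theorem for the smoothness of the Poisson orbits and the form definition of $T_1$ to move one square root across $\theta$.

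\textbf{Step 1: regularity of the orbits.} For $s>0$ both $x_s$ and $y_s$ lie in $\bigcap_k D(T_j^{k/2})$, by the spectral theorem. The maps
\[
 s\mapsto x_s,\qquad s\mapsto T_1^{1/2}x_s,\qquad s\mapsto y_s,\qquad s\mapsto T_2^{1/2}y_s
\]
are differentiable in norm on $(0,\infty)$, with $\partial_sx_s=-T_1^{1/2}x_s$ and $\partial_sy_s=-T_2^{1/2}y_s$. Indeed, the difference quotients of $e^{-s\lambda^{1/2}}$ converge dominatedly on the spectrum for $s$ in compact subsets of $(0,\infty)$. Since $\theta$ is bounded from $D(T_2^{1/2})$ with the graph norm into $D(T_1^{1/2})$, the map $s\mapsto\theta y_s$ is differentiable in the graph norm of $D(T_1^{1/2})$. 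Its derivative is $-\theta T_2^{1/2}y_s$; note that $T_2^{1/2}y_s\in D(T_2^{1/2})$, so this derivative lies in $D(T_1^{1/2})$.

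\textbf{Step 2: rewrite the first term.} Since $x_s\in D(T_1)$, we have
\[
 \ip{T_1^{1/2}x_s}{\theta y_s}=\mathfrak q_{T_1}\bigl(T_1^{-1/2}\cdots\bigr).
\]
More cleanly, write it as $\ip{T_1^{1/2}x_s}{\theta y_s}$ and differentiate by the product rule:
\[
 \frac{d}{ds}\ip{T_1^{1/2}x_s}{\theta y_s}
 =-\ip{T_1x_s}{\theta y_s}-\ip{T_1^{1/2}x_s}{\theta T_2^{1/2}y_s}.
\]
Similarly,
\[
 \frac{d}{ds}\ip{x_s}{\theta T_2^{1/2}y_s}
 =-\ip{T_1^{1/2}x_s}{\theta T_2^{1/2}y_s}-\ip{x_s}{\theta T_2y_s}.
\]
Subtracting, the mixed terms $\ip{T_1^{1/2}x_s}{\theta T_2^{1/2}y_s}$ cancel, and
\[
 D'(s)=-\ip{T_1x_s}{\theta y_s}+\ip{x_s}{\theta T_2y_s}.
\]

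\textbf{Step 3: identify with the form pairing.} Because $\theta y_s\in D(T_1^{1/2})$, the form representation gives
\[
 \ip{T_1x_s}{\theta y_s}=\mathfrak q_{T_1}(x_s,\theta y_s).
\]
This yields exactly $-\ip{x_s}{(T_1\theta-\theta T_2)y_s}$ in the stated sense. The convention is that the pairing is linear in the first slot, and the form is conjugated consistently with the anti-dual convention fixed earlier; I will check that the orientation of $\mathfrak q_{T_1}(x,\theta y)$ matches $\ip{T_1x}{\theta y}$.

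\textbf{Main obstacle.} The only delicate point is Step 1: justifying differentiability of $s\mapsto\theta y_s$ in the graph norm, since $\theta$ is only assumed continuous on form domains. I expect to handle it by applying the closed graph theorem, or the stated continuity, to the norm-convergent difference quotients of $y_s$ in $D(T_2^{1/2})$. Those quotients converge in the $T_2^{1/2}$-graph norm because $T_2^{1/2}y_s$ is itself a smooth orbit. Everything else reduces to scalar product rules for strongly differentiable vector functions.
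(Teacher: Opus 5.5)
Your proposal is correct and follows the paper's proof: differentiate both terms of $D(s)$, cancel the mixed terms $\langle T_1^{1/2}x_s,\theta T_2^{1/2}y_s\rangle$, and identify $\langle T_1x_s,\theta y_s\rangle$ with $\mathfrak q_{T_1}(x_s,\theta y_s)$ using $\theta y_s\in D(T_1^{1/2})$. Your Step~1 spells out regularity that the paper leaves implicit. The graph-norm differentiability of $\theta y_s$, which you flag as the main obstacle, is not actually needed: boundedness of $\theta$ on $\mathcal H_2$ already gives the product rule, and the form-domain hypothesis enters only in Step~3. The aborted expression with $T_1^{-1/2}$ in Step~2 should simply be deleted, since $T_1$ may have a kernel.
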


\begin{proof}
% Fix $0<\delta<R<\infty$.  Positive-time Poisson smoothing implies, for
% every integer $k\ge0$,
% \[
%  \sup_{\delta\le s\le R}
%  \|T_j^{k/2}e^{-sT_j^{1/2}}\|_{2\to2}<\infty,
%  \qquad j=1,2.
% \]
% Consequently all terms created by differentiating \(D(s)\), including
% \(T_1x_s\) and \(T_2y_s\), belong to \(L^2\) on \([\delta,R]\).

% The computation below is therefore an ordinary Hilbert-space
% differentiation.  Since \(\delta>0\) is arbitrary, the resulting identity
% holds for every \(s>0\).

Differentiation yields
\begin{align*}
 D'(s)
 ={}&-\ip{T_1x_s}{\theta y_s}
     -\ip{T_1^{1/2}x_s}{\theta T_2^{1/2}y_s}\\
 &\quad+\ip{T_1^{1/2}x_s}{\theta T_2^{1/2}y_s}
     +\ip{x_s}{\theta T_2y_s}.
\end{align*}
The middle terms cancel.  Since $\theta y_s\in D(T_1^{1/2})$,
\[
 \ip{T_1x_s}{\theta y_s}
 =\mathfrak q_{T_1}(x_s,\theta y_s).
\]
The remaining expression is therefore
\[
 -\mathfrak q_{T_1}(x_s,\theta y_s)
 +\ip{x_s}{\theta T_2y_s},
\]
which is precisely the form interpretation of
\eqref{eq:Wronskian-derivative}. 

% Since $[\delta,R]$ was arbitrary, the
% identity holds on $(0,\infty)$.
\end{proof}

From now on, $F_{N,s}u$ denotes the error
$(-\partial_s^2+L_N)\mathcal S_{N,s}u$, namely the four terms displayed in
\eqref{eq:Neumann-error}.
When Lemma~\ref{lem:Wronskian} is applied to the core term,
\(\theta_cv\) means \(\phi_cv\) extended by zero from \(\Omega_c\) to
\(\Omega\).  For the end term, \(\theta_ev\) means the restriction of
\(\phi_ev\) from \(\mathbb R^n\) to \(\Omega\).  

% The cutoff support
% conditions make both maps bounded on \(L^2\) and continuous between the
% corresponding form domains.  We suppress the subscript in each application
% of the abstract lemma.

\begin{proposition}\label{prop:small-pairing}
Let $1<p<2$ and let $f,u\in C_c^\infty(\overline\Omega)$.  Then
\begin{equation}\label{eq:small-pairing}
 \sup_{0<\delta<1}
 \abs{\int_\delta^1
 \ip{e^{-s\sqrt{L_N}}f}{F_{N,s}u}\,\dd s}
 \leq C\norm{\nabla f}{L^p(\Omega)}\norm u{L^{p'}(\Omega)}.
\end{equation}
Moreover, the integral has a limit as $\delta\downarrow0$.
\end{proposition}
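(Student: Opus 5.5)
We estimate the four terms of $F_{N,s}u$ in \eqref{eq:Neumann-error} separately, pairing each against $X_s=e^{-s\sqrt{L_N}}f$ for $0<s\le1$. Throughout we use two facts.

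\emph{Contractivity and Sobolev.} Since $1<p<2\le n$, we have $p<n$. Hence \eqref{eq:exterior-Sobolev} gives $\|f\|_{L^{p^*}(\Omega)}\le C\|\nabla f\|_{L^p(\Omega)}$, and contractivity gives $\|X_s\|_{p^*}\le C\|\nabla f\|_p$ uniformly in $s$.

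\emph{Gradient bound on transition regions.} We need $\nabla X_s$ in $L^p$ on the fixed compact transition regions $\supp\nabla\phi_c$ and $\supp\nabla\phi_e\subset\cA_1$, uniformly for $s\le1$. One route uses the low-exponent Riesz bound \eqref{eq:exterior-low-Riesz} at exponent $p\le2$:
\[
 \nabla X_s=\nabla L_N^{-1/2}e^{-s\sqrt{L_N}}L_N^{1/2}f .
\]
This, however, requires $\|L_N^{1/2}f\|_p$, which is circular. Instead, the commutator terms will be written in form-dual shape, as integrals of $X_s$ against $A\nabla\phi\cdot\nabla Y$ and of $\nabla X_s$ against $Y\,A\nabla\phi$, where $Y$ is the model orbit. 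The term containing $\nabla X_s$ is then handled by duality. Concretely, $\int_\delta^1\nabla X_s\,ds$ can be integrated in $s$, and the $s$-integral of the Poisson semigroup is controlled. Alternatively, and this is what we try first, integrate by parts in the form so that only $X_s$ and gradients of the model orbit appear. This works because the $X_s$-derivative term equals the pairing of $X_s$ with $\operatorname{div}(Y A\nabla\phi)$ in the weak sense. That divergence is $A\nabla\phi\cdot\nabla Y+Y\operatorname{div}(A\nabla\phi)$, and for measurable $A$ the second term is not a function. So the fallback is a Caccioppoli estimate for $X_s$ on the transition region (Lemma~\ref{lem:local-caccioppoli}):
\[
 \|\nabla X_s\|_{L^2(K)}\lesssim \|X_s\|_{L^2(K')}+\|\partial_s^2X_s\|^{1/2}\|X_s\|^{1/2}.
\]
The second term behaves like $s^{-1}$ and is not integrable near $s=0$. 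The main obstacle is exactly the singular $s\to0$ behavior of the product of $\nabla X_s$ with the model orbits.

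\emph{Resolution via off-diagonal smallness.} The key observation is that every model orbit in the commutator terms is evaluated off the support of its data, since $\dist(\supp\nabla\phi_j,\supp\rho_j)>0$. By \eqref{eq:domain-small-off}, applied to the core semigroup (shifted, with the same small-$s$ kernel bounds) and to the whole-space semigroup, we get
\[
 \|e^{-s\sqrt{H_{c,N}}}(\rho_cu)\|_{W^{1,2}(\supp\nabla\phi_c)}+\|e^{-s\sqrt L}(\rho_eu)\|_{W^{1,2}(\cA_1)}\le Cs\|u\|_{p'} .
\]
These bounds extend to $W^{1,q}$ for any $q$ by local regularity; we need $q=p'$, or use H\"older on compact sets with an $L^2$ gradient bound of $X_s$. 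Lemma~\ref{lem:m-small} also gives $|m_s(u)|+|m_s''(u)|\le Cs\|u\|_{p'}$.

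It remains to control $s\|\nabla X_s\|_{L^{2}(K)}$, since with the factor $s$ the singularity is cancelled. By Caccioppoli,
\[
 s\|\nabla X_s\|_{L^2(K)}\lesssim s\|X_s\|_{L^2(K')}+s\|\partial_s^2X_s\|_{L^2(K')}^{1/2}\|X_s\|_{L^2(K')}^{1/2}.
\]
The analytic bound $\|s^2\partial_s^2X_s\|_{p^*}\le C\|f\|_{p^*}$, from Poisson-kernel derivative estimates, then gives $s\|\nabla X_s\|_{L^2(K)}\lesssim\|f\|_{p^*}$ on compact $K$. With this, each commutator term is bounded by $C\|\nabla f\|_p\|u\|_{p'}$ uniformly in $s\in(0,1]$.

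The remaining two terms are easy. The zeroth-order core term is bounded by $\|X_s\|_{p^*}\,\|\phi_ce^{-s\sqrt{H_{c,N}}}\rho_cu\|_{(p^*)'}\lesssim\|\nabla f\|_p\|u\|_{p'}$, using compact support, H\"older, and contractivity. The term $(\phi_e-1)m_s''(u)$ is handled by compact support and Lemma~\ref{lem:m-small}.

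Hence the integrand is bounded uniformly on $(0,1]$. This gives \eqref{eq:small-pairing}, and the existence of the limit as $\delta\downarrow0$ follows by dominated convergence, since the integrand is continuous on $(0,1]$ and bounded.
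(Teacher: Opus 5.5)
\textbf{Genuine gap: the argument fails when $p^*<2$.} Your termwise approach breaks down for $n\ge3$ and $1<p<2n/(n+2)$, that is, whenever $p^*=np/(n-p)<2$. The statement must cover this range. The key step is $s\|\nabla X_s\|_{L^2(K)}\lesssim\|f\|_{p^*}$. To get it, you insert $\|X_s\|_{L^2(K')}$ and $\|\partial_s^2X_s\|_{L^2(K')}$ into Caccioppoli and bound them by $L^{p^*}$ norms, which silently uses $L^{p^*}(K')\subset L^2(K')$.

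For $p^*<2$ the claimed inequality is in fact false. Already for $A=I$, take bumps $f_\lambda=\lambda^{n/p^*}g(\lambda(\cdot-x_0))$ with $x_0\in K$ away from $\partial\Omega$. These keep $\|f_\lambda\|_{p^*}$ and $\|\nabla f_\lambda\|_p$ fixed, while scaling gives, at $s=\lambda^{-1}$, $s\|\nabla X_s\|_{L^2(K)}\approx\lambda^{n/p^*-n/2}\to\infty$.

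Your fallback of upgrading the off-diagonal model bounds to $W^{1,p'}$ ``by local regularity'' is not available either. For bounded measurable $A$, gradients of solutions are only in $L^{2+\varepsilon}_{\mathrm{loc}}$ (Meyers), and $p'$ can be arbitrarily large. More generally, any direct estimate of $\int\overline{X_s}\,A\nabla\phi_c\cdot\nabla U_{c,s}$ needs $\nabla U_{c,s}$ in $L^{(p^*)'}$ or better on the transition region. That is exactly the high-exponent information that is missing. When $p^*\ge2$ (in particular for every $p$ if $n=2$), your termwise estimate does go through and is simpler than the paper's argument.

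\textbf{How the paper avoids this.} The paper never pairs $X_s$ with $\nabla U$ directly. It splits $f=f_{\mathrm{loc}}+f_{\mathrm{far}}$, with $\chi=1$ near every transition support.

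For $f_{\mathrm{far}}$, the exterior orbit is itself off-diagonal. So \eqref{eq:domain-small-off} at exponent $p^*$ gives $\|e^{-s\sqrt{L_N}}f_{\mathrm{far}}\|_{W^{1,2}(K_1\cap\Omega)}\lesssim s\|\nabla f\|_p$ for every $p^*$, from kernel decay with no embedding. This is paired with the crude bound $\|U_{c,s}\|_{W^{1,2}}\lesssim s^{-1}\|u\|_{p'}$.

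For $f_{\mathrm{loc}}$, one has $\|f_{\mathrm{loc}}\|_{W^{1,p}}\lesssim\|\nabla f\|_p$, so the inhomogeneous estimate \eqref{eq:localized-square-root} removes the circularity you noticed. Even so, the uncorrected core and end errors are not estimated absolutely. Lemma~\ref{lem:Wronskian} integrates them exactly, converting $\int_\delta^1$ into boundary terms $D_c(\delta)-D_c(1)$ and $D_e(\delta)-D_e(1)$. These involve only $L_N^{1/2}X_s$, $H_{c,N}^{1/2}(\phi_cX_s)$ and $L^{1/2}(\phi_eX_s)$ paired against $L^{p'}$ data. They are controlled by \eqref{eq:localized-square-root}, \eqref{eq:core-reverse} and \eqref{eq:whole-space-reverse}.

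Only the centering term is estimated absolutely, via $|m_s(u)|\,\|\nabla X^{\mathrm{loc}}_s\|_p\lesssim s\cdot s^{-1}$ from \eqref{eq:exterior-low-Riesz} and analyticity. This is also why the proposition asserts a uniform bound on the truncated integrals plus existence of the limit, rather than boundedness of the integrand near $s=0$.
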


\begin{proof}
Let $K$ contain the supports of $\nabla\phi_c$, $\nabla\phi_e$,
$\phi_c$, and $\phi_e-1$.  Choose
$\chi\in C_c^\infty(\R^n)$ equal to one on a small neighborhood of $K$,
and put
\[
 f_{\mathrm{loc}}=\chi f,\qquad f_{\mathrm{far}}=(1-\chi)f.
\]
Because $1<p<2\leq n$, we have $p<n$.  The compact Poincar\'e estimate
\eqref{eq:exterior-local-Poincare}, applied on the support of $\chi$, yields
\[
 \norm{f_{\mathrm{loc}}}{W^{1,p}(\Omega)}
 \leq C\norm{\nabla f}{L^p(\Omega)}.
\]
Indeed,
\(\nabla(\chi f)=\chi\nabla f+f\nabla\chi\), and the compact Poincar\'e
estimate controls the second term as well as \(\|\chi f\|_p\).
The global Sobolev inequality \eqref{eq:exterior-Sobolev} also implies, with
$p^*=np/(n-p)$,
\begin{equation}\label{eq:ffar-pstar}
 \norm{f_{\mathrm{far}}}{L^{p^*}(\Omega)}
 \leq C\norm{\nabla f}{L^p(\Omega)}.
\end{equation}
Here the same product calculation yields
\(\|\nabla f_{\mathrm{far}}\|_p\leq C\|\nabla f\|_p\).

We first treat the local part.  To keep the endpoint calculation readable,
write
\[
 X_s=e^{-s\sqrt{L_N}}f_{\mathrm{loc}},\qquad
 U_{c,s}=e^{-s\sqrt{H_{c,N}}}(\rho_cu),\qquad
 U_{e,s}=e^{-s\sqrt L}(\rho_eu).
\]
For the core, set
\[
 D_c(s)=\ip{L_N^{1/2}X_s}{\phi_cU_{c,s}}
       -\ip{X_s}{\phi_cH_{c,N}^{1/2}U_{c,s}}.
\]
Lemma~\ref{lem:Wronskian}, with
$T_1=L_N$, $T_2=H_{c,N}$, and $\theta=\phi_c$, yields for
$0<\delta<1$
\begin{equation}\label{eq:core-Wronskian-truncated}
 \int_\delta^1\ip{X_s}
 {[L_N,\phi_c]U_{c,s}-\phi_cU_{c,s}}\,\dd s
 =D_c(\delta)-D_c(1).
\end{equation}
At zero, \eqref{eq:localized-square-root}, \eqref{eq:core-reverse}, and
H\"older's inequality yield
\begin{align*}
 \abs{\ip{L_N^{1/2}f_{\mathrm{loc}}}{\phi_c\rho_cu}}
 &\leq C\norm{f_{\mathrm{loc}}}{W^{1,p}(\Omega)}
          \norm u{L^{p'}(\Omega)},\\
 \abs{\ip{f_{\mathrm{loc}}}
 {\phi_cH_{c,N}^{1/2}(\rho_cu)}}
 &=\abs{\ip{H_{c,N}^{1/2}(\phi_cf_{\mathrm{loc}})}{\rho_cu}}\\
 &\leq C\norm{\phi_cf_{\mathrm{loc}}}{W^{1,p}(\Omega_c)}
          \norm u{L^{p'}(\Omega)}.
\end{align*}
To pass to the lower endpoint, commute $L_N^{1/2}$ with its Poisson
semigroup in the first pairing and move $H_{c,N}^{1/2}$ onto
$\phi_cX_\delta$ in the second.  The low-exponent Riesz bound,
\eqref{eq:localized-square-root}, and strong semigroup continuity imply
$X_\delta\to f_{\mathrm{loc}}$ in $W^{1,p}$; then
\eqref{eq:core-reverse} and the strong $L^{p'}$-continuity of the core
semigroup imply $D_c(\delta)\to D_c(0)$, and
$|D_c(0)|\leq C\|\nabla f\|_p\|u\|_{p'}$.  At $s=1$, analyticity of the
two Poisson semigroups implies
\begin{align*}
 \norm{L_N^{1/2}e^{-\sqrt{L_N}}f_{\mathrm{loc}}}{L^p}
 &\leq C\norm{f_{\mathrm{loc}}}{L^p},&
 \norm{e^{-\sqrt{H_{c,N}}}(\rho_cu)}{L^{p'}}
 &\leq C\norm u{L^{p'}},\\
 \norm{e^{-\sqrt{L_N}}f_{\mathrm{loc}}}{L^p}
 &\leq\norm{f_{\mathrm{loc}}}{L^p},&
 \norm{H_{c,N}^{1/2}e^{-\sqrt{H_{c,N}}}(\rho_cu)}{L^{p'}}
 &\leq C\norm u{L^{p'}}.
\end{align*}
Consequently $|D_c(1)|\leq
C\|\nabla f\|_p\|u\|_{p'}$.

For the end, define
\[
 D_e(s)=\ip{L_N^{1/2}X_s}{\phi_eU_{e,s}}
       -\ip{X_s}{\phi_eL^{1/2}U_{e,s}}.
\]
Lemma~\ref{lem:Wronskian}, now with $T_2=L$, yields
\begin{equation}\label{eq:end-Wronskian-truncated}
 \int_\delta^1\ip{X_s}{[L_N,\phi_e]U_{e,s}}\,\dd s
 =D_e(\delta)-D_e(1).
\end{equation}
The first term of $D_e(0)$ is bounded by
\eqref{eq:localized-square-root}.  For the second term, $\phi_ef_{\mathrm{loc}}$
vanishes near $\partial\Omega$ and therefore extends by zero to a function
in $W^{1,p}(\R^n)$.  Self-adjointness and
\eqref{eq:whole-space-reverse} imply
\[
 \abs{\ip{f_{\mathrm{loc}}}{\phi_eL^{1/2}(\rho_eu)}}
 =\abs{\ip{L^{1/2}(\phi_ef_{\mathrm{loc}})}{\rho_eu}}
 \leq C\norm{\phi_ef_{\mathrm{loc}}}{W^{1,p}(\Omega)}
          \norm u{L^{p'}}.
\]
The convergence $D_e(\delta)\to D_e(0)$ follows by the same argument as
before.  The two terms of $D_e(1)$ are bounded
exactly as at the core endpoint, using
analyticity on $L^p(\Omega)$ and $L^{p'}(\R^n)$.  Hence
\[
 \abs{D_e(0)}+\abs{D_e(1)}
 \leq C\norm{\nabla f}{L^p}\norm u{L^{p'}}.
\]
Equations \eqref{eq:core-Wronskian-truncated} and
\eqref{eq:end-Wronskian-truncated} control the uncorrected local error and
show that its integral has a limit at zero.

We next estimate the two local correction terms absolutely.  Since $m_s$
is spatially constant, \eqref{eq:form-commutator} implies
\[
 \abs{\ip{[L_N,\phi_e]m_s(u)}{X_s}}
 \leq C\abs{m_s(u)}\norm{\nabla X_s}{L^p(\Omega)}.
\]
Factor $\nabla e^{-s\sqrt{L_N}}=(\nabla L_N^{-1/2})L_N^{1/2}e^{-s\sqrt{L_N}}$. The Riesz estimate \eqref{eq:exterior-low-Riesz} and
analyticity imply
\[
 \norm{\nabla X_s}{L^p}
 \leq Cs^{-1}\norm{f_{\mathrm{loc}}}{L^p},\qquad 0<s\leq1.
\]
Together with \eqref{eq:m-small}, this produces an $s$-independent integrable
bound.  Similarly, by contractivity and \eqref{eq:m-small},
\[
 \abs{\ip{(\phi_e-1)m_s''(u)}{X_s}}
 \leq Cs\norm u{L^{p'}}\norm{f_{\mathrm{loc}}}{L^p}.
\]
It remains to treat $f_{\rm far}$.  Choose bounded open sets $K_1,K_2$ with
\[
 K\subset K_1,\qquad \overline{K_1}\subset K_2,\qquad
 \dist(K_2,\supp f_{\rm far})>0.
\]
Applying the off-diagonal estimate \eqref{eq:domain-small-off} with exponent
$p^*$ and using \eqref{eq:ffar-pstar}, we obtain
\begin{equation}\label{eq:far-Poisson-small}
 \norm{e^{-s\sqrt{L_N}}f_{\rm far}}{W^{1,2}(K_1\cap\Omega)}
 \leq Cs\norm{\nabla f}{L^p},\qquad0<s\leq1.
\end{equation}
For the core model, the $L^2$ Kato estimate and spectral calculus yield
\begin{align*}
 \norm{U_{c,s}}{L^2(\Omega_c)}
 &\leq C\norm u{L^{p'}},\\
 \norm{\nabla U_{c,s}}{L^2(\Omega_c)}
 &\leq C\norm{H_{c,N}^{1/2}e^{-s\sqrt{H_{c,N}}}(\rho_cu)}{L^2}
 \leq Cs^{-1}\norm u{L^{p'}}.
\end{align*}
Here $L^{p'}(\Omega_c)\subset L^2(\Omega_c)$ because $p'>2$ and the core has finite measure.  On the end transition region, the supports of
$\rho_eu$ and $\nabla\phi_e$ are separated.  The whole-space version of
the calculation proving \eqref{eq:domain-small-off} shows
\begin{equation}\label{eq:end-small-W12}
 \norm{U_{e,s}}{W^{1,2}(\cA_1)}
 +\abs{m_s(u)}+\abs{m_s''(u)}
 \leq Cs\norm u{L^{p'}}.
\end{equation}
Let $X_s^{\mathrm{far}}=e^{-s\sqrt{L_N}}f_{\rm far}$.  The form commutator
\eqref{eq:form-commutator} and
\eqref{eq:far-Poisson-small}--\eqref{eq:end-small-W12} now conclude
\begin{align*}
 \abs{\ip{[L_N,\phi_c]U_{c,s}}{X_s^{\mathrm{far}}}}
 &\leq C\norm{U_{c,s}}{W^{1,2}}
          \norm{X_s^{\mathrm{far}}}{W^{1,2}(K_1\cap\Omega)}
 \leq C\norm u{p'}\norm{\nabla f}p,\\
 \abs{\ip{\phi_cU_{c,s}}{X_s^{\mathrm{far}}}}
 &\leq C\norm{U_{c,s}}2\norm{X_s^{\mathrm{far}}}{L^2(K_1\cap\Omega)}
 \leq Cs\norm u{p'}\norm{\nabla f}p,\\
 \abs{\ip{[L_N,\phi_e](U_{e,s}-m_s)}{X_s^{\mathrm{far}}}}
 &\leq Cs^2\norm u{p'}\norm{\nabla f}p,\\
 \abs{\ip{(\phi_e-1)m_s''}{X_s^{\mathrm{far}}}}
 &\leq Cs^2\norm u{p'}\norm{\nabla f}p.
\end{align*}
Every far-part term is absolutely $s$-integrable on $(0,1)$.  Together with
the two Wronskian identities and the local correction estimates, this proves
\eqref{eq:small-pairing} and the existence of the improper integral.
\end{proof}

\begin{proposition}\label{prop:large-pairing}
Let $1<p<2$, and let
$f,u\in C_c^\infty(\overline\Omega)$.  Then
\begin{equation}\label{eq:large-pairing}
 \int_1^\infty\abs{\ip{e^{-s\sqrt{L_N}}f}{F_{N,s}u}}\,\dd s
 \leq C\norm{\nabla f}{L^p(\Omega)}\norm u{L^{p'}(\Omega)}.
\end{equation}
\end{proposition}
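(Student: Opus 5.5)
We estimate the four terms of \eqref{eq:Neumann-error} separately and absolutely for $s\geq1$, pairing each against $X_s=e^{-s\sqrt{L_N}}f$. Since $1<p<2\leq n$, the Sobolev inequality \eqref{eq:exterior-Sobolev} gives $\|f\|_{L^{p^*}(\Omega)}\leq C\|\nabla f\|_{L^p(\Omega)}$ with $p^*=np/(n-p)$. All errors are supported in a fixed bounded set $K$ containing the supports of $\nabla\phi_c,\nabla\phi_e,\phi_c,\phi_e-1$. Apply \eqref{eq:domain-local} with exponent $p^*$ and a bounded open $K_1\supset K$. This yields
\[
 \|X_s\|_{W^{1,2}(K_1\cap\Omega)}\leq Cs^{-n/p^*}\|\nabla f\|_{L^p(\Omega)},\qquad s\geq1 .
\]
Note that $n/p^*=n/p-1$.

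The core terms come next. The shifted core semigroup has exponential decay: $\|U_{c,s}\|_{L^2(\Omega_c)}\leq e^{-s}\|\rho_c u\|_{2}\leq Ce^{-s}\|u\|_{p'}$, because $p'>2$ and the core has finite measure. The spectral calculus gives $\|H_{c,N}^{1/2}U_{c,s}\|_2\leq Ce^{-s/2}\|u\|_{p'}$, hence $\|\nabla U_{c,s}\|_{L^2(\Omega_c)}\leq Ce^{-s/2}\|u\|_{p'}$. The form commutator \eqref{eq:form-commutator} bounds $|\langle[L_N,\phi_c]U_{c,s},X_s\rangle|$ by $C\|U_{c,s}\|_{W^{1,2}}\|X_s\|_{W^{1,2}(K_1\cap\Omega)}$. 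The zeroth-order term $\phi_cU_{c,s}$ is bounded by $\|U_{c,s}\|_2\|X_s\|_{L^2(K_1)}$. Both terms are then $O(e^{-s/2}s^{1-n/p})$, which is integrable on $(1,\infty)$.

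The end terms carry the main difficulty, and the Devyver centering is designed for them. Using \eqref{eq:form-commutator} on $\cA_1$ and \eqref{eq:centered-W12} with $r=p'>2$, we get
\[
 |\langle[L_N,\phi_e](U_{e,s}-m_s(u)),X_s\rangle|
 \leq C s^{-n/p'-\varepsilon}s^{-n/p^*}\|u\|_{p'}\|\nabla f\|_p .
\]
The total exponent is $-(n/p'+n/p-1)-\varepsilon=-(n-1)-\varepsilon$, which is integrable for every $n\geq2$. The gain $\varepsilon$ from centering by $m_s$ is exactly what is needed at $n=2$; without it the uncentered bound $s^{-n/p'}$ gives $s^{-(n-1)}$ and fails there.

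The last term is $(\phi_e-1)m_s''(u)$. It is a constant times a bounded compactly supported function, so it is controlled by $|m_s''(u)|\,\|X_s\|_{L^1(K)}\leq C|m_s''(u)|\,\|X_s\|_{L^2(K_1)}$. By \eqref{eq:m-second-large}, this is at most $Cs^{-2-n/p'}s^{1-n/p}=Cs^{-1-n}$, which is integrable.

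I expect the main obstacle to be justifying the Hölder-continuity bound behind \eqref{eq:centered-W12} with the integrability exponent $n/p'+\varepsilon$. One must also check that the local Poisson estimate \eqref{eq:domain-local} applies with data in $L^{p^*}$ for $f\in C_c^\infty(\overline\Omega)$. Both facts are recorded in Proposition~\ref{prop:local-Poisson-estimates}, so the proof reduces to collecting these four integrable bounds.
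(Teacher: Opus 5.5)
Your proposal is correct and is essentially the paper's proof: the paper also bounds the four terms of \eqref{eq:Neumann-error} against $e^{-s\sqrt{L_N}}f$ on a fixed bounded set. It uses the same ingredients as you (the shifted-core spectral gap, \eqref{eq:centered-W12} and \eqref{eq:m-second-large} with $r=p'$, and \eqref{eq:domain-local} at exponent $p^*$ with the exterior Sobolev inequality) and reaches the same integrable powers $n-1+\varepsilon$ and $n+1$ plus an exponentially decaying core term. The only cosmetic difference is that the paper first packages the error as one dual bound in $\|\psi\|_{W^{1,2}(K\cap\Omega)}$ and then inserts $\psi=e^{-s\sqrt{L_N}}f$, whereas you estimate each pairing directly.
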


\begin{proof}
Let $K$ be a fixed bounded set containing the supports of all four terms in
\eqref{eq:Neumann-error}.  The shifted core has a spectral gap.  More
precisely, the $L^2$ Kato estimate and spectral calculus yield, for $s\geq1$,
\[
 \norm{e^{-s\sqrt{H_{c,N}}}(\rho_cu)}{W^{1,2}(\Omega_c)}
 \leq Ce^{-cs}\norm{\rho_cu}{L^2(\Omega_c)}
 \leq Ce^{-cs}\norm u{L^{p'}(\Omega)}.
\]
For a cutoff $\phi$ supported in a fixed transition region, the commutator
formula \eqref{eq:form-commutator} implies
\[
 \abs{\ip{[L_N,\phi]v}{\psi}}
 \leq C\norm v{W^{1,2}(\supp\nabla\phi)}
          \norm\psi{W^{1,2}(\supp\nabla\phi)}.
\]
Apply this inequality to the core and end terms.  Proposition
\ref{prop:local-Poisson-estimates}, with $r=p'>2$, and \eqref{eq:m-second-large} then give, for every \(\psi\in D(\mathfrak q_N)\),
\begin{equation}\label{eq:large-error-dual}
 \abs{\ip{F_{N,s}u}{\psi}}
 \leq C\left(e^{-cs}+s^{-n/p'-\varepsilon}
                  +s^{-2-n/p'}\right)
 \norm u{L^{p'}(\Omega)}
 \norm\psi{W^{1,2}(K\cap\Omega)}.
\end{equation}
Because \(p<n\), the exterior Sobolev inequality implies
$\|f\|_{L^{p^*}}\leq C\|\nabla f\|_{L^p}$.  Employ
\eqref{eq:domain-local} with exponent $p^*$:
\begin{equation}\label{eq:large-exterior-solution}
 \norm{e^{-s\sqrt{L_N}}f}{W^{1,2}(K\cap\Omega)}
 \leq Cs^{-n/p^*}\norm{\nabla f}{L^p(\Omega)},\qquad s\geq1.
\end{equation}
Multiplying \eqref{eq:large-error-dual} and
\eqref{eq:large-exterior-solution}, the end term has power
\[
 \frac n{p^*}+\frac n{p'}+\varepsilon
 =n-1+\varepsilon>1,
\]
where we used $1/p^*=1/p-1/n$.  The $m_s''$ term has power
$n/p^*+2+n/p'=n+1$, and the core term is exponentially integrable.
Integrating on $[1,\infty)$ proves \eqref{eq:large-pairing}.
\end{proof}

We are now in a position to prove Theorem~\ref{thm:intro-neumann-reverse}.

\begin{proof}[Proof of Theorem~\ref{thm:intro-neumann-reverse}]

By duality, it suffices to assume \(1<p<2\).  We take $f$ and $u$ in
the smooth form-core classes fixed in Remark~\ref{rem:Sobolev-form-core}. Differentiating \eqref{eq:exterior-Devyver} at $s=0$ and using
\eqref{eq:m-prime-zero}, we obtain
\[
 -\partial_s\mathcal S_{N,s}u|_{s=0}
 =\phi_cH_{c,N}^{1/2}(\rho_cu)
 +\phi_eL^{1/2}(\rho_eu)
 -(\phi_e-1)\ip{\rho_eu}{L^{1/2}\eta}_{\R^n}.
\]
For the core term, self-adjointness and \eqref{eq:core-reverse} yield
\begin{align*}
 \abs{\ip f{\phi_cH_{c,N}^{1/2}(\rho_cu)}}
 &=\abs{\ip{H_{c,N}^{1/2}(\phi_cf)}{\rho_cu}}\\
 &\leq C\|\phi_cf\|_{W^{1,p}(\Omega_c)}\|u\|_{L^{p'}(\Omega)}\\
 &\leq C\|\nabla f\|_{L^p(\Omega)}\|u\|_{L^{p'}(\Omega)}.
\end{align*}
The last inequality follows from \eqref{eq:exterior-local-Poincare} on
the fixed support of $\phi_c$. By \eqref{eq:whole-space-reverse} 
\[
 \abs{\ip f{\phi_eL^{1/2}(\rho_eu)}}
 =\abs{\ip{L^{1/2}(\phi_ef)}{\rho_eu}}
 \leq C\|\nabla f\|_{L^p(\Omega)}\|u\|_{L^{p'}(\Omega)}.
\]
The same estimate applied to the fixed function $\eta$:
\[
 \abs{\ip{\rho_eu}{L^{1/2}\eta}}
 \leq C\|\nabla\eta\|_{L^p(\R^n)}\|u\|_{L^{p'}(\Omega)}.
\]
Moreover,
\[
 \abs{\ip f{\phi_e-1}}
 \leq C\|f\|_{L^p(\supp(\phi_e-1))}
 \leq C\|\nabla f\|_{L^p(\Omega)}.
\]
Consequently,
\[
 \abs{\ip f{-\partial_s\mathcal S_{N,s}u|_{s=0}}}
 \leq C\|\nabla f\|_{L^p(\Omega)}\|u\|_{L^{p'}(\Omega)}.
\]
To proceed, set $X_s=e^{-s\sqrt{L_N}}f$, $Y_s=\mathcal S_{N,s}u$, and $W(s)=\ip{X_s}{\partial_sY_s}-\ip{\partial_sX_s}{Y_s}$. Now, since
\[
 \partial_s^2X_s=L_NX_s,
 \qquad \partial_s^2Y_s=L_NY_s-F_{N,s}u
\]
in the form sense, the standard form product rule on every finite interval
$[\delta,R]\subset(0,\infty)$ gives
\begin{align*}
 W'(s)
 &=\ip{X_s}{\partial_s^2Y_s}
   -\ip{\partial_s^2X_s}{Y_s}\\
 &=\mathfrak q_N(X_s,Y_s)-\ip{X_s}{F_{N,s}u}
   -\mathfrak q_N(X_s,Y_s)
 =-\ip{X_s}{F_{N,s}u}.
\end{align*}
Hence
\[
 W(R)-W(\delta)
 =-\int_\delta^R
   \ip{e^{-s\sqrt{L_N}}f}{F_{N,s}u}\,\dd s.
\]
We next let \(R\to\infty\).  By
\eqref{eq:exterior-L2-kernels}, the spectral theorem implies $X_R\longrightarrow0$, $\partial_RX_R\longrightarrow0$ in $L^2(\Omega)$. The shifted core terms in $Y_R$ and $\partial_RY_R$ decay exponentially. The whole-space end terms and their derivatives likewise converge to zero in \(L^2\).  Finally,
\[
 m_R(u)=\ip{\rho_eu}{e^{-R\sqrt L}\eta},
 \qquad
 m_R'(u)=-\ip{\rho_eu}{L^{1/2}e^{-R\sqrt L}\eta}
\]
converge to zero by the spectral theorem.  Thus $W(R)\to0$.

At the lower endpoint, form-norm continuity yields
\[
 X_0=f,
 \qquad \partial_sX_s|_{s=0}=-L_N^{1/2}f,
 \qquad Y_0=u.
\]
Moreover, the spectral theorem and the form-domain assumptions on the
model inputs imply
$\partial_sY_s\to\partial_s\mathcal S_{N,s}u|_{s=0}$ in $L^2$; for the
scalar correction this is the convergence $m_s'(u)\to m_0'(u)$.
Consequently $W(\delta)\to W(0)$ and
\[
 W(0)=\ip f{\partial_s\mathcal S_{N,s}u|_{s=0}}
       +\ip{L_N^{1/2}f}{u}.
\]
Proposition~\ref{prop:small-pairing} guarantees convergence of the integral at
zero, while Proposition~\ref{prop:large-pairing} shows absolute
convergence at infinity.  Letting $R\uparrow\infty$ and
$\delta\downarrow0$, we get
\[
 \ip{L_N^{1/2}f}{u}
 =\ip f{-\partial_s\mathcal S_{N,s}u|_{s=0}}
 +\int_0^\infty
  \ip{e^{-s\sqrt{L_N}}f}{F_{N,s}u}\,\dd s.
\]
Combining the model estimate above with
Propositions~\ref{prop:small-pairing} and \ref{prop:large-pairing}, we conclude
\[
 \abs{\ip{L_N^{1/2}f}{u}}
 \leq C\norm{\nabla f}{L^p(\Omega)}\norm u{L^{p'}(\Omega)}.
\]
Taking the supremum over $u$ with $\|u\|_{p'}\leq1$ completes the proof.
\end{proof}

\begin{remark}
The purpose of Devyver's modified Poisson parametrix is not merely to improve
a constant.  On the
fixed transition annulus, the unmodified whole-space Poisson extension has
size $s^{-n/p'}\|u\|_{p'}$.  The exterior Poisson solution paired with it
has size $s^{-n/p^*}\|\nabla f\|_p$, and
\[
 \frac n{p^*}+\frac n{p'}=n-1.
\]
Thus Carron's uncentered large-time integral is governed by $\int_1^\infty s^{-(n-1)}\,\dd s$. The resulting absolute majorant is integrable for $n\geq3$ but is only
logarithmically nonintegrable when $n=2$.
The value $m_s(u)$ is the local constant mode of the end solution.  After it
is subtracted, the H\"older continuity in the spatial variable of the
Poisson kernel supplies the extra factor $s^{-\varepsilon}$ in
\eqref{eq:centered-W12}.  The critical planar integral becomes
$\int_1^\infty s^{-1-\varepsilon}\,\dd s$.  Because
$\phi_e-1$ is constant near the physical boundary, this centering preserves
the Neumann condition.  The same correction would create a nonzero boundary
trace in the Dirichlet problem, which is why the cancellation is genuinely
boundary-condition dependent.
\end{remark}

\begin{remark}\label{rem:Dirichlet-reverse}
The Poisson-parametrix and Wronskian strategy also has a Dirichlet
analogue.  In dimensions $n\geq3$ the unmodified Dirichlet error is already
summable.  In the planar case the Neumann correction
\eqref{eq:exterior-Devyver} is unavailable because it destroys the zero
trace, so a separate trace-compatible low-energy estimate is required.
We do not duplicate that parallel analysis.  Jiang and Lin already proved
\[
 \norm{L_D^{1/2}f}{L^p(\Omega)}
 \leq C_p\norm{\nabla f}{L^p(\Omega)},
 \qquad 1<p<\infty,
\]
by Littlewood--Paley theory from \cite{KillipVisanZhang16} and a comparison of the whole-space and
Dirichlet heat kernels; see
\cite[Theorem~1.3 and Section~2]{JiangLin24}.  We therefore present the
detailed proof only for the Neumann realization, where the constant mode
and its boundary-compatible centering are the distinctive features.
\end{remark}

\part{Stability of the Riesz transform}\label{part2}

\section{Exterior Poisson parametrix}
\label{sec:forward-parametrix}

We first use the models $L$ and $L_{c,B}$ from Part~\ref{part1}, with
the exponents $P_B$ defined below.  The final paragraph of the proof in
Section~7 verifies the same construction for the overlapping models in
\eqref{eq:overlapping-models}.

The reverse argument in Part~\ref{part1} only pairs the form-valued gluing error with a Poisson solution.  The forward argument must take the spatial gradient of the Green correction.  When $A$ is merely bounded and measurable, the exact
commutator is
\[
 [L_B,\phi]v=-\diver(vA\nabla\phi)-A\nabla v\mathbin{\cdo}\nabla\phi
 \in W^{-1,p}+L^p,
\]
not a scalar function involving derivatives of $A$.  This section constructs the explicit part of the forward parametrix and records its form-valued error.  The next section integrates that error in the homogeneous
dual and solves it by the zero-energy inverse.

For the whole-space end and the pure bounded core introduced in
Section~\ref{sec:exterior-reverse}, recall
\begin{align*}
 p_L&=\sup\bigl\{r>2:\ \nabla L^{-1/2}\text{ is bounded on every }
                  L^q(\R^n),\ 2\leq q<r\bigr\},\\
 p_{c,B}&=\sup\bigl\{r>2:\ \nabla H_{c,B}^{-1/2}\text{ is bounded on every }
                  L^q(\Omega_c),\ 2\leq q<r\bigr\},\\
 P_B&=\min\{p_L,p_{c,B}\},\qquad B\in\{D,N\}.
\end{align*}
The core exponent cannot be omitted: a Lipschitz physical boundary does not have
an automatic full high-$p$ Riesz range.  The following spaces are needed
only to state precisely where the form-valued commutators live:
\[
 W_D^{1,r}(\Omega)=W_0^{1,r}(\Omega),\qquad
 W_N^{1,r}(\Omega)=W^{1,r}(\Omega),
 \qquad W_B^{-1,r}=(W_B^{1,r'})^*.
\]
If $f\in L^r(\Omega)$ and $H\in L^r(\Omega;\R^n)$, then it is clear that
\begin{equation}\label{eq:compact-form-norm}
 \norm{f+\diver H}{W_B^{-1,r}}
 \leq \norm f{L^r}+\norm H{L^r}.
\end{equation}

For Dirichlet data use the unmodified parametrix
\(\mathcal E_{D,s}\) from \eqref{eq:exterior-Carron}; for Neumann data use the centered parametrix \(\mathcal S_{N,s}\) from
\eqref{eq:exterior-Devyver}.  Set
\begin{align*}
 F_{D,s}u={}&[L_D,\phi_c]e^{-s\sqrt{H_{c,D}}}(\rho_cu)
 -\phi_ce^{-s\sqrt{H_{c,D}}}(\rho_cu)
 +[L_D,\phi_e]e^{-s\sqrt L}(\rho_eu),\\
 F_{N,s}u={}&(-\partial_s^2+L_N)\mathcal S_{N,s}u,
\end{align*}
where the second expression is expanded in \eqref{eq:Neumann-error}.  Thus
\begin{equation}\label{eq:parametrix-error-casewise}
 F_{B,s}u=(-\partial_s^2+L_B)U_s,\qquad
 U_s=
 \begin{cases}
  \mathcal E_{D,s}u,&B=D,\\
  \mathcal S_{N,s}u,&B=N.
 \end{cases}
\end{equation}
To be precise,
\[
 U_s=
 \begin{cases}
  \phi_ce^{-s\sqrt{H_{c,D}}}(\rho_cu)
  +\phi_ee^{-s\sqrt L}(\rho_eu),&B=D,\\[1mm]
  \phi_ce^{-s\sqrt{H_{c,N}}}(\rho_cu)
  +\phi_ee^{-s\sqrt L}(\rho_eu)
  -(\phi_e-1)m_s(u),&B=N.
 \end{cases}
\]
Every commutator is understood in the form sense
\eqref{eq:form-commutator}.  The error is supported in one fixed bounded
set and belongs to \(W_B^{-1,p}\) whenever the model Poisson
pieces belong locally to \(W^{1,p}\).  The centering correction is used only for
Neumann data: for Dirichlet data the term
\(-(\phi_e-1)m_s(u)\) would have the nonzero boundary trace \(m_s(u)\).

We next define the explicit part of the decomposition.  The Dirichlet
potential \(L^{-1/2}(\rho_eu)\) is locally in \(L^p\) when \(p<n\), by
fractional integration.  For Neumann data, put
\[
 W_R=\int_0^R e^{-s\sqrt L}(\rho_eu)\,\dd s,\qquad
 c_R=\int_{\mathcal A_2}\eta W_R .
\]
Fubini and the definition of \(m_s\) imply
\(c_R=\int_0^Rm_s(u)\,\dd s\), while
\[
 \nabla W_R
 =\nabla L^{-1/2}(I-e^{-R\sqrt L})(\rho_eu).
\]
If \(2<p<p_L\), the whole-space Riesz bound and strong stability of the
Poisson semigroup on \(L^p(\mathbb R^n)\) show that
\begin{equation}\label{eq:WR-gradient-limit}
 \nabla W_R\longrightarrow
 \nabla L^{-1/2}(\rho_eu)\quad\text{in }L^p(\mathbb R^n),\qquad
 \sup_R\|\nabla W_R\|_p\lesssim\|u\|_p .
\end{equation}
We shall use the following standard endpoint limits without further
comment.  If \(T=L\), with \(p<p_L\), or \(T=H_{c,B}\), then
\begin{equation}\label{eq:model-Poisson-endpoints}
 \begin{aligned}
 e^{-s\sqrt T}f&\longrightarrow f,&
 s\sqrt T\,e^{-s\sqrt T}f&\longrightarrow0&& (s\downarrow0),\\
 e^{-s\sqrt T}f&\longrightarrow0,&
 s\sqrt T\,e^{-s\sqrt T}f&\longrightarrow0&& (s\uparrow\infty)
 \end{aligned}
 \qquad\text{in }L^p.
\end{equation}
For \(L\), strong stability follows by interpolating the \(L^2\) spectral
limit with \(L^q\)-contractivity for some \(p<q<p_L\).  Analyticity then
gives the differentiated limits.  For \(H_{c,B}\), the shift gives
exponential decay.

% Here \(\ker_{L^2}L=\{0\}\): a form-kernel element has zero gradient and
% is therefore a constant, and no nonzero constant belongs to
% \(L^2(\mathbb R^n)\).

Here is the Poincar\'e argument used to normalize \(W_R\).  For
\(v\in W^{1,p}(\mathcal A_2)\), write
\(v_{\mathcal A_2}=|\mathcal A_2|^{-1}\int_{\mathcal A_2}v\) and
\(m_\eta(v)=\int_{\mathcal A_2}\eta v\).  Since \(\int\eta=1\),
\[
 |m_\eta(v)-v_{\mathcal A_2}|
 \le \|\eta\|_{p'}\|v-v_{\mathcal A_2}\|_p .
\]
The usual Poincar\'e inequality on the connected bounded annulus therefore
yields
\begin{equation}\label{eq:weighted-Poincare}
 \|v-m_\eta(v)\|_{L^p(\mathcal A_2)}
 \le C\|\nabla v\|_{L^p(\mathcal A_2)} .
\end{equation}
Apply this to \(v=W_{R_2}-W_{R_1}\).  Because
\(m_\eta(v)=c_{R_2}-c_{R_1}\), one obtains
\begin{equation}\label{eq:WR-Cauchy}
 \|(W_{R_2}-c_{R_2})-(W_{R_1}-c_{R_1})\|_{W^{1,p}(\mathcal A_2)}
 \le C\|\nabla(W_{R_2}-W_{R_1})\|_{L^p(\mathcal A_2)} .
\end{equation}
Thus \eqref{eq:WR-gradient-limit} determines a homogeneous whole-space
class with gradient \(\nabla L^{-1/2}(\rho_eu)\).  Let \(\widetilde W_u\)
be its representative normalized by
\(\int_{\mathcal A_2}\eta\widetilde W_u=0\).  If \(K\) is any fixed
bounded connected Lipschitz domain containing \(\mathcal A_2\), the same
weighted Poincar\'e argument yields
\[
 \|v-m_\eta(v)\|_{L^p(K)}\leq C_K\|\nabla v\|_{L^p(K)}.
\]
Consequently \(W_R-c_R\to\widetilde W_u\) in \(W^{1,p}(K)\), and
\begin{equation}\label{eq:normalized-potential-bound}
 \int_{\mathcal A_2}\eta\widetilde W_u=0,\qquad
 \nabla\widetilde W_u=\nabla L^{-1/2}(\rho_eu)\ \text{on }\mathbb R^n,
 \qquad
 \|\widetilde W_u\|_{W^{1,p}(\mathcal A_2)}
 \le C\|u\|_p,
\end{equation}
where the last inequality follows by applying \eqref{eq:weighted-Poincare} to the normalized limit.

We define the following operators:
\begin{align*}
 \mathcal R_Du={}&
 \phi_e\nabla L^{-1/2}(\rho_eu)
 +(\nabla\phi_e)L^{-1/2}(\rho_eu)+\phi_c\nabla H_{c,D}^{-1/2}(\rho_cu)
 +(\nabla\phi_c)H_{c,D}^{-1/2}(\rho_cu),\\
 \mathcal R_Nu={}&
 \phi_e\nabla L^{-1/2}(\rho_eu)
 +(\nabla\phi_e)\widetilde W_u +\phi_c\nabla H_{c,N}^{-1/2}(\rho_cu)
 +(\nabla\phi_c)H_{c,N}^{-1/2}(\rho_cu).
\end{align*}
Only the restriction of \(\widetilde W_u\) to
\(\supp\nabla\phi_e\subset\mathcal A_2\) enters \(\mathcal R_N\).
We use \(\mathcal R_B\) to mean \(\mathcal R_D\) when \(B=D\) and
\(\mathcal R_N\) when \(B=N\).

\begin{proposition}\label{prop:explicit-parametrix}
One has
\begin{align*}
 \|\mathcal R_Nu\|_{L^p(\Omega)}
 &\le C_p\|u\|_{L^p(\Omega)},&&2<p<P_N,\\
 \|\mathcal R_Du\|_{L^p(\Omega)}
 &\le C_p\|u\|_{L^p(\Omega)},&&2<p<\min\{n,P_D\}.
\end{align*}
\end{proposition}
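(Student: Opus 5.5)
We bound the four terms of $\mathcal R_B u$ separately. Each is either a model Riesz transform multiplied by a bounded cutoff, or a bounded compactly supported function $\nabla\phi$ times a model potential restricted to a fixed bounded set. Throughout we use $\|\rho_j u\|_{L^p}\le\|u\|_{L^p(\Omega)}$ and the fact that the cutoffs are smooth with fixed supports.

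The core terms are the same for $B=D$ and $B=N$. Since $2<p<P_B\le p_{c,B}$, the definition of $p_{c,B}$ gives
\[
 \|\phi_c\nabla H_{c,B}^{-1/2}(\rho_cu)\|_{L^p(\Omega)}\le C\|u\|_{L^p}.
\]
Here the product is extended by zero across the artificial boundary, where $\phi_c$ vanishes. For the zeroth-order term, the shift gives $H_{c,B}^{-1/2}=\pi^{-1/2}\int_0^\infty t^{-1/2}e^{-t}e^{-tL_{c,B}}\,\dd t$. Contractivity of $e^{-tL_{c,B}}$ on $L^p(\Omega_c)$ therefore makes $H_{c,B}^{-1/2}$ bounded on $L^p(\Omega_c)$, and multiplication by $\nabla\phi_c\in L^\infty$ finishes the estimate. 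The term $\phi_e\nabla L^{-1/2}(\rho_eu)$ is handled in the same way. We extend $\rho_eu$ by zero through the obstacle and use $p<p_L$, so that the whole-space Riesz bound applies. After multiplication by $\phi_e$ and restriction to $\Omega$, this term has $L^p(\Omega)$ norm at most $C\|u\|_p$.

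The only delicate term is $(\nabla\phi_e)\times(\text{end potential})$. For Neumann data it is immediate from the preparation above: $\supp\nabla\phi_e\subset\mathcal A_1\subset\mathcal A_2$, and \eqref{eq:normalized-potential-bound} gives $\|\widetilde W_u\|_{L^p(\mathcal A_2)}\le C\|u\|_p$. Hence $\|(\nabla\phi_e)\widetilde W_u\|_p\le C\|u\|_p$ for every $2<p<P_N$. The real content is the construction of $\widetilde W_u$: the homogeneous class defined by \eqref{eq:WR-gradient-limit}, \eqref{eq:WR-Cauchy} and the weighted Poincaré inequality \eqref{eq:weighted-Poincare}. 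That construction was done before the statement, and it avoids any restriction $p<n$.

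For Dirichlet data we need a genuine local $L^p$ bound for $L^{-1/2}(\rho_eu)$ on the compact set $\supp\nabla\phi_e$, and this is where $p<n$ enters. By subordination, $L^{-1/2}$ has the kernel $\pi^{-1/2}\int_0^\infty t^{-1/2}h_t(x,y)\,\dd t$. The Gaussian bound then gives $|L^{-1/2}(x,y)|\le C|x-y|^{1-n}$, so $|L^{-1/2}g|\le C I_1|g|$ pointwise, where $I_1$ is the Riesz potential. For $1<p<n$, Hardy--Littlewood--Sobolev gives $\|I_1|g|\|_{L^{p^*}(\R^n)}\le C\|g\|_p$ with $p^*=np/(n-p)$. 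Hölder on the bounded set $\supp\nabla\phi_e$ then gives
\[
 \|(\nabla\phi_e)L^{-1/2}(\rho_eu)\|_{L^p}\le C|\supp\nabla\phi_e|^{1/n}\|u\|_p.
\]
This step, namely that the potential is locally in $L^p$ only when $p<n$, is exactly the obstruction behind the restriction $p<\min\{n,P_D\}$. It is also the reason the Neumann case needs the centred representative instead. Summing the four bounds for each $B$ proves the proposition.
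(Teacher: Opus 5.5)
Your proposal is correct and follows the paper's proof term by term. The paper uses the model Riesz bounds ($p<p_L$, $p<p_{c,B}$) for the two gradient terms, subordination with the shift $H_{c,B}\ge1$ for the core potential, \eqref{eq:normalized-potential-bound} for the Neumann end potential, and the Gaussian fractional-integration bound $L^{-1/2}:L^p\to L^{p^*}$, $1<p<n$, followed by H\"older on $\supp\nabla\phi_e$ for the Dirichlet end potential; your explicit $|x-y|^{1-n}$ kernel bound and Hardy--Littlewood--Sobolev step just spell out what the paper states in one line.
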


\begin{proof}
The end-gradient term (i.e., $\nabla L^{-1/2}$) uses precisely \(p<p_L\), and the core-gradient term (i.e., $\nabla H^{-1/2}_{c,B}$)
uses \(p<p_{c,B}\).  The normalized Neumann potential is controlled by
\eqref{eq:normalized-potential-bound}.  Since \(H_{c,B}\ge1\),
subordination yields
\(\|H_{c,B}^{-1/2}f\|_p\le\|f\|_p\).  Finally, Gaussian bounds imply
\[
 L^{-1/2}:L^p(\mathbb R^n)\longrightarrow L^{p^*}(\mathbb R^n),
 \qquad p^*=\frac{np}{n-p},\quad1<p<n.
\]
On the fixed support of \(\nabla\phi_e\), H\"older's inequality controls
the Dirichlet potential term.  This proves the two estimates.
\end{proof}

% \begin{remark}\label{rem:forward-exponent-bookkeeping}
% There is no exponent restriction in the weighted Poincar\'e argument.
% For \(B=N\), the inequalities \(p<p_L\) and \(p<p_{c,N}\) are used,
% respectively, for the end and core gradient terms above.  The same two
% restrictions, through the single condition \(p<P_N\), also permit the
% choice \(p<r<P_N\) in Lemma~\ref{lem:automatic-ST} and in the large-time
% estimate of Lemma~\ref{lem:large-time-homogeneous-error} below.  In the Dirichlet case
% the additional restriction \(p<n\) controls the uncentered capacitary
% potential.

The reason for studying the zero-energy inverse next can already be seen
formally.  If \(U_s\) is the Poisson parametrix and
\((-\partial_s^2+L_B)U_s=F_{B,s}u\), integration of the half-line Green
kernel in its first variable suggests that, for each \(s>0\), we should
first form the balanced error
\[
 G_{B,s}u\sim(I-e^{-s\sqrt{L_B}})F_{B,s}u
\]
and then set \(g_{B,s}u=L_B^{-1}G_{B,s}u\).  Here \(g_{B,s}u\) is the
contribution of the source at height \(s\) to the integrated correction;
it is not the correction at cylinder height \(s\).  We must therefore
know that \(L_B\) is invertible between the relevant homogeneous Sobolev
spaces before applying it to each balanced error.  This is the purpose
of Section~\ref{sec:homogeneous-invertibility}.  Section~\ref{sec:forward-caccioppoli}
defines these errors by duality and proves the estimates needed to
integrate the already inverted contributions.

% \end{remark}

\section{Invertibility of the exterior operator}
\label{sec:homogeneous-invertibility}

This section constructs the zero-energy inverse needed in the proof of
Theorem~\ref{thm:intro-forward}.  We first treat the whole-space end and
the bounded core, and then glue the model inverses.  The Dirichlet and
Neumann arguments are stated separately because constants must be handled
differently in the Neumann case.

% The point of the construction is to obtain an exterior inverse from
% model Riesz bounds alone.  The commutators are compact between the
% homogeneous spaces, although their scalar parts need not be compact in
% $L^r$.  For Neumann data, balancing the localized sources makes the
% scalar errors have zero mean; this also places them in the energy dual
% in dimension two.  The Fredholm and interpolation theorems used below
% are classical.  Their role here is to turn these explicit form
% remainders into an inverse compatible with the energy solution.

Fix \(p>2\) in one of the ranges of Theorem~\ref{thm:intro-forward}.
For Dirichlet data, let
\[
 \dot W^{1,r}_0(\Omega)
 :=\overline{C_c^\infty(\Omega)}^{\,\|\nabla\cdot\|_r},
 \qquad
 \dot W^{-1,r}_D(\Omega)
 :=\bigl(\dot W^{1,r'}_0(\Omega)\bigr)^*.
\]
When \(r<n\), every element of \(\dot W^{1,r}_0(\Omega)\) has a unique
representative in \(L^{r^*}(\Omega)\), where \(r^*=nr/(n-r)\).  We call
this the canonical Sobolev representative.  

% Thus this convention concerns
% the solution space \(\dot W^{1,r}_0(\Omega)\), not its negative dual.

For Neumann data, use
\[
 \dot W^{1,r}(\Omega)/\mathbb C
 :=\{v\in W^{1,r}_{\mathrm{loc}}(\Omega):
          \nabla v\in L^r(\Omega)\}/\mathbb C,
 \qquad
 \dot W^{-1,r}_N(\Omega)
 :=\bigl(\dot W^{1,r'}(\Omega)/\mathbb C\bigr)^*.
\]
Equivalently, every \(F\in\dot W^{-1,r}_N(\Omega)\) annihilates constant
test functions: \(\langle F,1\rangle=0\).  We write
\(\dot W^{-1,r}_B(\Omega)\) for the corresponding dual space in either
boundary condition.
On the whole space, we use the shorthand
\[
 \dot W^{-1,r}(\mathbb R^n)
 :=\bigl(\dot W^{1,r'}(\mathbb R^n)/\mathbb C\bigr)^*.
\]

For each class \([v]\in\dot W^{1,r}(\Omega)/\mathbb C\), use the function
\(\eta\) already fixed on \(\mathcal A_2\), and choose the unique representative
satisfying \(\int_\Omega\eta v=0\).  If \(K\cap\Omega\) is bounded,
connected, Lipschitz, and contains \(\supp\eta\), the Poincar\'e inequality
implies
\begin{equation}\label{eq:homogeneous-local-poincare}
 \|v\|_{W^{1,r}(K\cap\Omega)}
 \leq C_{K,r}\|\nabla v\|_{L^r(\Omega)},
 \qquad \int_\Omega\eta v=0.
\end{equation}
This estimate permits cutoff multiplication and local Rellich compactness.
The homogeneous Neumann space is reflexive because its gradient image is a
closed subspace of \(L^r(\Omega;\mathbb C^n)\).

On the bounded core, we use
\[
 W_D^{1,r}(\Omega_c)=W^{1,r}_0(\Omega_c),\qquad
 W_N^{1,r}(\Omega_c)
 =\left\{v\in W^{1,r}(\Omega_c):\int_{\Omega_c}v=0\right\}.
\]
The second space is the mean-zero realization of
\(W^{1,r}(\Omega_c)/\mathbb C\).  In either case,
\[
 W_B^{-1,r}(\Omega_c)
 :=\bigl(W_B^{1,r'}(\Omega_c)\bigr)^*;
\]
for \(B=N\), the mean-subtraction projection identifies this dual with
the functionals on \(W^{1,r'}(\Omega_c)\) that annihilate constants.

We shall use the following standard facts.  The homogeneous Dirichlet and
Neumann spaces, together with their duals, form compatible complex
interpolation scales on compact exponent intervals.  This follows from the
usual extension and restriction operators for Lipschitz domains and the
retraction theorem; see \cite[Chapter~6]{BerghLofstrom}.  We shall also use
the stability of the Fredholm index on interpolation scales from
\cite{KaltonMitrea98}.  Local compactness will always come from the
Rellich--Kondrachov theorem, the compact-adjoint theorem of Schauder, or a
finite-rank term.  These tools are recalled at the points where they enter
the proof.

% The next lemma uses three elementary ideas.  The Hahn--Banach theorem
% writes a homogeneous negative Sobolev datum as \(-\diver G\).  The Riesz
% transform bounds then control the factorization of \(\nabla L^{-1}(-\diver G)\).  On the bounded core, we first invert the shifted operator \(H_{c,B}\), and then regard \(L_{c,B}\) as  its compact perturbation.

\begin{lemma}
\label{lem:homogeneous-model-inverses}
Let \(1<r<\infty\), and suppose that the whole-space Riesz transform
\(\nabla L^{-1/2}\) is bounded on both \(L^r(\mathbb R^n)\) and
\(L^{r'}(\mathbb R^n)\).  Then every
\[
 F\in\bigl(\dot W^{1,r'}(\mathbb R^n)/\mathbb C\bigr)^*
\]
has a unique solution \(v\in\dot W^{1,r}(\mathbb R^n)/\mathbb C\) of
\(Lv=F\), and
\begin{equation}\label{eq:model-zero-inverse}
 \|\nabla v\|_r\leq C_r
 \|F\|_{(\dot W^{1,r'}(\mathbb R^n)/\mathbb C)^*}.
\end{equation}
If the datum also belongs to
\(\dot W^{-1,2}(\mathbb R^n)\), this solution has the same gradient as
the Lax--Milgram energy solution.

For the same exponent \(r\), suppose that \(2\leq r<p_{c,B}\).  On the
bounded core $\Omega_c$, every datum in the corresponding negative space has a unique solution of
\[
 L_{c,B}v=F
\]
in \(W^{1,r}_0(\Omega_c)\) when \(B=D\), or in
\(W^{1,r}(\Omega_c)/\mathbb C\) when \(B=N\). The solution satisfies the corresponding gradient estimate. Whenever the datum also belongs to the dual energy space at exponent \(2\), this solution agrees with the Lax--Milgram solution, modulo constants in the Neumann case.
\end{lemma}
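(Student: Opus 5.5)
For the whole-space part we will construct the solution through the factorization $L^{-1}=L^{-1/2}L^{-1/2}$ and then use duality; the two Riesz bounds at $r$ and $r'$ play complementary roles. First write the datum in divergence form. By Hahn--Banach, the map $w\mapsto\nabla w$ embeds $\dot W^{1,r'}(\R^n)/\mathbb C$ isometrically into $L^{r'}(\R^n;\mathbb C^n)$. Hence every $F$ in the dual is represented as $\langle F,w\rangle=\int G\cdot\nabla\overline w$ with $\|G\|_r=\|F\|$, so that formally $F=-\diver G$.

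Next, for $G\in L^2\cap L^r$ the energy solution is $v=L^{-1}(-\diver G)$. Its gradient factors as
\[
 \nabla v=(\nabla L^{-1/2})\,(\nabla L^{-1/2})^*G .
\]
The adjoint $(\nabla L^{-1/2})^*=L^{-1/2}\diver$ is bounded on $L^r$ because $\nabla L^{-1/2}$ is bounded on $L^{r'}$. The outer factor is bounded on $L^r$ by hypothesis. This gives \eqref{eq:model-zero-inverse} on a dense class, and we then extend by density. The extension does not depend on the choice of $G$: two representatives differ by a divergence-free field, and $(\nabla L^{-1/2})^*$ annihilates such fields. By construction the solution agrees with the Lax--Milgram gradient whenever the datum lies in $\dot W^{-1,2}$.

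Uniqueness is obtained by the same argument run at the dual exponent. If $\nabla v\in L^r$ and $Lv=0$, test against $w$ solving $Lw=-\diver H$ with $\nabla w\in L^{r'}$, which exists by the previous step with $r$ and $r'$ exchanged. Using symmetry of $A$,
\[
 \int H\cdot\nabla\overline v=\int A\nabla w\cdot\nabla\overline v=0 .
\]
This pairing is justified by approximating $w$ in $\dot W^{1,r'}$ by test functions. Since $H$ is arbitrary, $\nabla v=0$.

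For the core we first invert the shifted operator $H_{c,B}$. The same factorization $H_{c,B}^{-1}=H_{c,B}^{-1/2}H_{c,B}^{-1/2}$ applies, with $\nabla H_{c,B}^{-1/2}$ bounded on $L^r$ for $2\le r<p_{c,B}$ and on $L^{r'}\subset$ low exponents. On the bounded domain the low range holds by the Gaussian theory, or equivalently via local $L^{r'}\to L^2$ inclusion. The zeroth-order part $H_{c,B}^{-1/2}$ is bounded on every $L^q$. Together these show that $H_{c,B}\colon W_B^{1,r}\to W_B^{-1,r}$ is an isomorphism. The inverse of $H_{c,B}$ is the energy solution on $L^2$ data and extends by density.

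We then write
\[
 L_{c,B}=H_{c,B}-I=H_{c,B}\bigl(I-H_{c,B}^{-1}\iota\bigr),
\]
where $\iota\colon W_B^{1,r}\to W_B^{-1,r}$ is the inclusion. This inclusion is compact by Rellich--Kondrachov, since $\Omega_c$ is bounded Lipschitz. Therefore $L_{c,B}$ is Fredholm of index zero, and it remains to show injectivity.

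For injectivity, suppose $v\in W^{1,r}$ with $r\ge2$ satisfies $L_{c,B}v=0$. Since $\Omega_c$ is bounded, $v\in W^{1,2}$, and the energy identity forces $\nabla v=0$. So $v=0$ in the Dirichlet case (using the zero trace) and $v$ is constant in the Neumann case; the constant is removed by the mean-zero normalization or the quotient. In the Neumann case the data annihilate constants, which is exactly the compatible range, so surjectivity onto this range follows from index zero. The same inclusion $W^{1,r}\subset W^{1,2}$, together with uniqueness of the energy solution, gives agreement with Lax--Milgram.

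The main obstacle I expect is the whole-space density and independence step. One has to check that the extended operator really solves $Lv=F$ weakly against $\dot W^{1,r'}$ test functions, and not only against $C_c^\infty$. This requires the density of $C_c^\infty$ in $\dot W^{1,r'}(\R^n)/\mathbb C$, which is standard, combined with continuity of the form pairing $\int A\nabla v\cdot\nabla\overline w$ on $L^r\times L^{r'}$.
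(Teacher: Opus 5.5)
Your proposal is correct and follows essentially the same route as the paper: the Hahn--Banach representation $F=-\diver G$, the factorization $\nabla v=(\nabla L^{-1/2})(\nabla L^{-1/2})^*G$ extended by density, uniqueness by solving the dual problem at $r'$ and using the symmetry of $A$, and on the core the inversion of the shift $H_{c,B}$, then the Rellich-compact perturbation $L_{c,B}=H_{c,B}(I-H_{c,B}^{-1}\iota)$ with index zero and injectivity from the energy identity.

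There are two small corrections.

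\begin{itemize}
\item On the bounded core one has $L^2\subset L^{r'}$, so the parenthetical ``local $L^{r'}\to L^2$ inclusion'' goes the wrong way and cannot replace the Gaussian low-exponent theory, which is the justification to rely on.
\item Agreement with the Lax--Milgram solution is not literally ``by construction'', because the Hahn--Banach field $G$ need not lie in $L^2$. The paper fixes this by taking the common representative $G=\nabla\Delta_{\R^n}^{-1}F\in L^r\cap L^2$ via double Riesz transforms. Your observation that $(\nabla L^{-1/2})^*$ annihilates divergence-free fields also works, provided you apply it to an $L^r$ representative and an $L^2$ representative at the same time.
\end{itemize}
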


\begin{proof}
We first explain the whole-space assertion.  Consider the isometry
\[
 J:\dot W^{1,r'}(\mathbb R^n)/\mathbb C
   \longrightarrow L^{r'}(\mathbb R^n;\mathbb C^n),
 \qquad J\psi=\nabla\psi.
\]
The rule \(J\psi\mapsto\langle F,\psi\rangle\) is a bounded
conjugate-linear functional
on \(\operatorname{Ran}J\).  The Hahn--Banach theorem extends it to all of
\(L^{r'}(\mathbb R^n;\mathbb C^n)\), and \(L^{r'}\)-duality produces
\(G\in L^r(\mathbb R^n;\mathbb C^n)\) such that
\[
 \langle F,\psi\rangle
 =\int_{\mathbb R^n}G\mathbin{\cdo}\nabla\overline\psi
 =\langle-\diver G,\psi\rangle,
 \qquad \|G\|_r\leq\|F\|.
\]
Thus \(F=-\diver G\) in the form sense.

The factorization $\nabla L^{-1}F
 =\bigl(\nabla L^{-1/2}\bigr)
  \bigl(L^{-1/2}(-\diver)\bigr)G$ is bounded on \(L^r\), because the second factor is the Banach adjoint of
the Riesz transform on \(L^{r'}\).  To justify the formula, choose
\(G_k\in C_c^\infty(\mathbb R^n;\mathbb C^n)\) with
\(G_k\to G\) in \(L^r\), and put \(F_k=-\diver G_k\).  Ellipticity and the
Lax--Milgram theorem yield a unique class
\(v_k\in\dot W^{1,2}(\mathbb R^n)/\mathbb C\) such that
\[
 \int_{\mathbb R^n}A\nabla v_k\mathbin{\cdo}\nabla\overline\psi
 =\langle F_k,\psi\rangle,
 \qquad \psi\in\dot W^{1,2}(\mathbb R^n)/\mathbb C,
\]
and $\|\nabla v_k\|_{L^2(\mathbb R^n)}
 \leq C\|F_k\|_{\dot W^{-1,2}(\mathbb R^n)}$. In other words, \(v_k=L^{-1}F_k\) in the energy sense.  The factorization
shows that \(\nabla v_k\) is Cauchy in \(L^r\).  Since the gradient
realization of \(\dot W^{1,r}(\mathbb R^n)/\mathbb C\) is closed, the
limit is \(\nabla v\) for a unique class \(v\).  Passing to distributions
proves \(Lv=F\), and the same argument proves
\eqref{eq:model-zero-inverse}.

For uniqueness, suppose that \(Lu=0\) with
\(u\in\dot W^{1,r}(\mathbb R^n)/\mathbb C\).  Given
\(G\in C_c^\infty(\mathbb R^n;\mathbb C^n)\), solve
\(L\psi=-\diver G\) at exponent \(r'\).  Since
\(\nabla\psi\in L^{r'}\) and \(\nabla u\in L^r\), all the following
pairings are absolutely convergent.  Form symmetry gives
\(\langle L\psi,u\rangle=\langle\psi,Lu\rangle\), and hence
\[
 \int_{\mathbb R^n}G\mathbin{\cdo}\nabla\overline u
 =\langle L\psi,u\rangle
 =\langle\psi,Lu\rangle=0.
\]
It follows that \(\nabla u=0\), so \(u\) is zero in the quotient.

We next prove compatibility.  Let
\[
 F\in\dot W^{-1,r}(\mathbb R^n)
       \cap\dot W^{-1,2}(\mathbb R^n),
\]
and let \(v_r\) and \(v_2\) be the solutions obtained at exponents \(r\)
and \(2\), respectively.  Let \(\Delta_{\mathbb R^n}\) be the nonnegative
Euclidean Laplacian and set
\[
 G=\nabla\Delta_{\mathbb R^n}^{-1}F.
\]
Applying the preceding Hahn--Banach representation at exponents \(r\)
and \(2\), write \(F=-\diver H_r=-\diver H_2\), where
\(H_r\in L^r\) and \(H_2\in L^2\).  The maps
\(H_r\mapsto G\) and \(H_2\mapsto G\) are matrices of double Euclidean
Riesz transforms.  Therefore \(G\in L^r\cap L^2\) and
\(F=-\diver G\). Choose one sequence \(G_k\in C_c^\infty(\mathbb R^n;\mathbb C^n)\)
converging to \(G\) in both \(L^r\) and \(L^2\).  For each \(G_k\), the
\(L^r\)- and \(L^2\)-factorizations are the same distributional identity,
because both extend the energy solution for the smooth datum
\(-\diver G_k\).  Passing to the limit in the two norms yields
\[
 \nabla v_r=\nabla v_2
\]
in the sense of distributions, and hence almost everywhere.

We now consider the bounded core.  Put \(H_{c,B}=1+L_{c,B}\).  In the
Neumann case, identify \(W^{1,r}(\Omega_c)/\mathbb C\) with its mean-zero
representatives and identify the dual with the functionals that annihilate
constants.  The definition of \(p_{c,B}\), the standard low-exponent
estimate, and duality imply
\[
 H_{c,B}^{-1/2}:L^r(\Omega_c)\longrightarrow W_B^{1,r}(\Omega_c),
 \qquad
 H_{c,B}^{-1/2}:L^{r'}(\Omega_c)\longrightarrow W_B^{1,r'}(\Omega_c).
\]
Here and below, the Neumann maps are restricted to the mean-zero
subspaces.  By self-adjointness, dualizing the second map and
% \[
%  H_{c,B}^{-1/2}:W_B^{-1,r}(\Omega_c)\longrightarrow L^r(\Omega_c).
% \]
composing the two half-powers gives a bounded inverse
\[
 H_{c,B}^{-1}:W_B^{-1,r}(\Omega_c)\longrightarrow W_B^{1,r}(\Omega_c),
 \qquad
 \|H_{c,B}^{-1}F\|_{W_B^{1,r}}\leq C\|F\|_{W_B^{-1,r}}.
\]
The identity \(H_{c,B}H_{c,B}^{-1}F=F\) follows first on a dense
functional-calculus core and then for every \(F\) by continuity.  Thus
\(H_{c,B}\) is onto.  It is one-to-one because \(r\geq2\), the boundedness
of \(\Omega_c\) gives \(W_B^{1,r}\subset W_B^{1,2}\), and the energy
identity for \(H_{c,B}u=0\) implies \(u=0\) (by Lax-Milgram). For clarity, if \(B=N\), \(F(1)=0\), and \(H_{c,N}u=F\), then
\begin{equation}\label{eq:core-shifted-weak}
 \int_{\Omega_c}A\nabla u\mathbin{\cdo}\nabla\overline\psi
 +\int_{\Omega_c}u\overline\psi
 =\langle F,\psi\rangle.
\end{equation}
Taking \(\psi=1\) shows that \(\int_{\Omega_c}u=0\).  Hence the inverse
just constructed preserves the chosen mean-zero realization.

Let
\[
 \mathcal J:W_B^{1,r}(\Omega_c)\longrightarrow W_B^{-1,r}(\Omega_c),
 \qquad
 \langle\mathcal Jv,\psi\rangle=\int_{\Omega_c}v\overline\psi,
\]
again on mean-zero spaces when \(B=N\).  Rellich compactness, together
with Poincar\'e's inequality in the Neumann case, shows that
\(\mathcal J\) is compact.  Therefore
\[
 L_{c,B}=H_{c,B}-\mathcal J
        =H_{c,B}(I-H_{c,B}^{-1}\mathcal J)
\]
is Fredholm of index zero by the Riesz--Schauder theorem (and isomorphism does not change the index of $I-H_{c,B}^{-1}\mathcal J$).  If
\(L_{c,B}u=0\), then \(u\in W_B^{1,2}(\Omega_c)\) (again, since $|\Omega_c|<\infty$), and the energy identity
forces \(u=0\) for Dirichlet data and \(\nabla u=0\) for Neumann data.
Thus the kernel is zero in the relevant space.  Index zero now implies
surjectivity, and the bounded inverse theorem yields the required gradient
estimate.  If a datum belongs to both the \(r\)- and \(2\)-dual spaces,
the \(r\)-solution is already an energy function; Lax--Milgram uniqueness
proves compatibility.

Finally, by duality the core inverse conclusion is also valid for \(r\) in
a sufficiently small interval below \(2\), provided \(r'<p_{c,B}\).  We
shall use this only to place the Fredholm family on an open exponent
interval containing \([2,p]\).
\end{proof}

\begin{remark}
\label{rem:model-inverse-compatibility}
The last assertion of Lemma~\ref{lem:homogeneous-model-inverses} will be
used several times.  If a model datum belongs both to the exponent-\(r\)
negative space and to the energy negative space, then its \(r\)-solution
and its Lax--Milgram solution (i.e., the $2$-solution) have the same gradient.  They agree exactly
on the bounded Dirichlet core and modulo constants on a Neumann model.
On the whole-space Dirichlet end, when the canonical Sobolev
representatives at both exponents are available, they agree as functions:
their difference is constant, and no nonzero constant belongs to the sum
of two finite-exponent Lebesgue spaces on \(\mathbb R^n\).  Thus this
remark permits an exponent-\(p\) model solution to be read as an energy
solution once its datum is known to belong to the energy negative space.
\end{remark}

\subsection{Dirichlet boundary condition}

We first recall the tools used below.  If \(K\) is a bounded Lipschitz
domain, then the inclusion map
\[
 W^{1,r}(K)\longrightarrow L^r(K)
\]
is compact by the Rellich--Kondrachov theorem
\cite[Theorem~6.3]{AdamsFournier03}.  Schauder's compact-adjoint theorem
\cite[Theorem~6.4]{Brezis11} says that a bounded operator is compact if
and only if its Banach adjoint is compact.  Define
\(\iota_{K,r}:L^r(K)\to\dot W^{-1,r}_D(\Omega)\) by
\[
 \langle\iota_{K,r}b,\psi\rangle
 =\int_K b\,\overline\psi,
 \qquad \psi\in\dot W^{1,r'}_0(\Omega).
\]
This functional realization is compact because it is the Banach adjoint
of the compact restriction
\(\dot W^{1,r'}_0(\Omega)\to L^{r'}(K)\).

% The same statement holds for the two model negative spaces.  Atkinson's
% theorem \cite{Atkinson51} says that an operator with a left and a right
% inverse modulo compact operators is Fredholm.  Finally, the Fredholm index is locally
% constant on a compatible interpolation scale \cite[Theorem~2.9]{KaltonMitrea98}.

\begin{theorem}
\label{thm:dirichlet-exterior-inverse}
Let \(n\geq3\) and
\[
 2<p<\min\{n,P_D\}.
\]
Then
\[
 L_D:\dot W^{1,p}_0(\Omega)\longrightarrow\dot W^{-1,p}_D(\Omega)
\]
is an isomorphism.  Equivalently, every
\(F\in\dot W^{-1,p}_D(\Omega)\) has a unique solution
\(v\in\dot W^{1,p}_0(\Omega)\) of \(L_Dv=F\), and
\begin{equation}\label{eq:homogeneous-isomorphism}
 \|\nabla v\|_p\leq C_p\|F\|_{\dot W^{-1,p}_D(\Omega)}.
\end{equation}
If \(F\) also belongs to \(\dot W^{-1,2}_D(\Omega)\), then this solution
coincides with the homogeneous Lax--Milgram solution.
\end{theorem}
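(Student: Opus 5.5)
We build left and right parametrices for \(L_D\) from the two model inverses of Lemma~\ref{lem:homogeneous-model-inverses}. We show the remainders are compact, transport the Fredholm index from the energy exponent \(r=2\), and then prove injectivity by promoting kernel elements to the energy space.

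\emph{Step 1: parametrices.} Fix the partition \(\rho_c+\rho_e=1\) and the separated cutoffs \(\phi_c,\phi_e\) of Section~\ref{sec:exterior-reverse}. For \(F\in\dot W^{-1,r}_D(\Omega)\), localize to \(\rho_cF\) and \(\rho_eF\). The whole-space piece \(\rho_eF\) lies in \(\dot W^{-1,r}(\mathbb R^n)\) after zero extension; the \(L^r\) part created by \(\nabla\rho_e\) is handled by Sobolev embedding, which uses \(r<n\). The core piece \(\rho_cF\) lies in \(W^{-1,r}_D(\Omega_c)\). Set
\[
 Q_rF=\phi_e L^{-1}(\rho_eF)+\phi_c L_{c,D}^{-1}(\rho_cF),
\]
where the end solution is taken as its canonical \(L^{r^*}\) representative, again available because \(r<n\). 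Then \(L_DQ_rF=F+[L_D,\phi_e]L^{-1}(\rho_eF)+[L_D,\phi_c]L_{c,D}^{-1}(\rho_cF)\). Each commutator has the form \(-\diver(vA\nabla\phi)-A\nabla v\cdot\nabla\phi\) and is supported on a fixed annulus disjoint from \(\supp\rho\). Its \(W^{1,r}\) input is controlled locally, so composing with Rellich compactness and with the compact maps \(\iota_{K,r}\) and their adjoints (Schauder) shows that \(L_DQ_r-I\) is compact on \(\dot W^{-1,r}_D(\Omega)\). For the left parametrix, take \(v\in\dot W^{1,r}_0\) and write
\[
 v=\phi_e L^{-1}\bigl(L(\rho_ev)\bigr)+\phi_c L_{c,D}^{-1}\bigl(L_{c,D}(\rho_cv)\bigr),
\]
then expand \(L(\rho_ev)=\rho_eL_Dv+[L,\rho_e]v\). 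The commutator terms factor through the compact restriction \(\dot W^{1,r}_0\to L^r(K)\). This gives \(P_rL_D=I+\text{compact}\). Atkinson's theorem then shows that \(L_D\) is Fredholm for every \(r\) in an open interval containing \([2,p]\). Both model inverses are available on this interval since \(r<\min\{n,P_D\}\), and the core inverse extends slightly below \(2\).

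\emph{Step 2: index and kernel.} At \(r=2\), Lax--Milgram together with the Sobolev inequality (\(n\ge3\)) shows that \(L_D\) is an isomorphism, so the index is \(0\). The homogeneous Dirichlet spaces form compatible complex interpolation scales, so by \cite{KaltonMitrea98} the index stays \(0\) on the whole interval; in particular it is \(0\) at \(r=p\). Next, let \(v\in\dot W^{1,p}_0\) with \(L_Dv=0\). The left-parametrix identity gives \(v=\phi_eL^{-1}G_e+\phi_cL_{c,D}^{-1}G_c\), where the data \(G_e,G_c\) are commutators of \(v\) with \(L_D\). These are compactly supported, lie in \(L^p+\diver L^p\) on a bounded set, and therefore belong to the energy negative spaces. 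By Remark~\ref{rem:model-inverse-compatibility} the model solutions are energy solutions, with canonical representatives matching on the end. Hence \(\nabla v\in L^2\), so \(v\in\dot W^{1,2}_0\). Energy uniqueness then gives \(v=0\). Index zero yields surjectivity, and the open mapping theorem gives \eqref{eq:homogeneous-isomorphism}.

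\emph{Step 3: compatibility.} Suppose \(F\in\dot W^{-1,p}_D\cap\dot W^{-1,2}_D\). Interpolating the inverses between \(2\) and \(p\) gives consistent operators on the scale. Alternatively, repeat the promotion argument on the difference of the \(p\)-solution and the \(2\)-solution, each localized through the parametrix identity, to see that their difference is a kernel element at exponent \(p\).

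The main obstacle is the promotion in Step 2. One must check that the end commutator datum, which is built from a \(\dot W^{1,p}_0\) function with \(p>2\), really lies in \(\dot W^{-1,2}(\mathbb R^n)\). We would verify this using compact support together with \(v\in L^{p^*}_{\mathrm{loc}}\) and \(p<n\). One must also make sure that the constant ambiguity on the end is fixed by the canonical representative, so that \(v\) itself, and not merely \(\nabla v\) modulo a constant, is identified as an energy function.
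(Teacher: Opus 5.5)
Steps 1--2 are the paper's proof. You use the same parametrix $Q_r$, built from $L_{c,D}^{-1}$ and the canonically represented $L^{-1}$. Compactness of the remainders comes from Rellich and the maps $\iota_{K,r}$, then Atkinson applies, and Kalton--Mitrea transports the index from $r=2$. Injectivity follows by promoting a kernel element to $\dot W^{1,2}_0(\Omega)$ through Remark~\ref{rem:model-inverse-compatibility}. One small slip: in the left remainder, only the divergence part $-\diver(vA\nabla\rho_j)$ factors through the compact restriction $\dot W^{1,r}_0(\Omega)\to L^r(K)$. The scalar part $A\nabla v\cdot\nabla\rho_j$ is merely bounded in $L^r(K)$. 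It is compact only because $L^r(K)$ embeds compactly into the model negative spaces, as you already used for the right remainder.

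The genuine gap is Step~3, the compatibility assertion. Your first alternative is circular. Interpolating the two inverses presupposes that they agree on $\dot W^{-1,2}_D\cap\dot W^{-1,p}_D$, which is exactly the claim. Invertibility at the exponents $2$ and $p$ does not force this: the difference $v_p-v_2$ only lies in $\dot W^{1,p}_0+\dot W^{1,2}_0$, where no injectivity has been proved.

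Your second alternative runs in the wrong direction. To make $v_p-v_2$ a kernel element \emph{at exponent $p$}, you need $v_2\in\dot W^{1,p}_0(\Omega)$, i.e.\ $L^p$-regularity of the energy solution. Your Step~2 promotion goes the other way: it pushes a $\dot W^{1,p}_0$ function down to $\dot W^{1,2}_0$, using $p>2$ on the bounded set $K$. One pass of the parametrix identity applied to $v_2$ only places the commutator sources at exponent $\min\{p,2^*\}$. So in general, whenever $2^*<p$, an iteration you have not described would be needed.

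The direct fix, which is what the paper does, is to apply the left identity to the $p$-solution itself:
\[
 v=Q_pF+\sum_{j\in\{c,e\}}\phi_jS_{j,p}[L_D,\rho_j]v.
\]
Since $F$ lies in both negative spaces, so do the localized data $\rho_jF$. The commutator sources lie in the energy negative spaces exactly as in your kernel argument. Remark~\ref{rem:model-inverse-compatibility} then promotes every term, so $v\in\dot W^{1,2}_0(\Omega)$ solves the energy equation, and Lax--Milgram uniqueness gives $v=v_2$. In other words, the difference is a kernel element at exponent $2$, not $p$.
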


\begin{proof}
Choose \(\varepsilon>0\) so small that
\[
 I=(2-\varepsilon,p+\varepsilon)
 \quad\text{satisfies}\quad
 p+\varepsilon<\min\{n,P_D\},
 \qquad (2-\varepsilon)'<\min\{n,P_D\}.
\]
The model inverses in Lemma~\ref{lem:homogeneous-model-inverses}, together
with its final duality observation, are available for every \(r\in I\).

\emph{Step 1: parametrix construction.}
Let $r\in I$. Use the partition \(\rho_c+\rho_e=1\) and the enlarged cutoffs
\(\phi_c,\phi_e\) from Section~3.  Thus \(\phi_j=1\) on a neighborhood of
\(\supp\rho_j\), and all cutoff derivatives are supported in one bounded
transition region, say $K$.  Localization of a functional $F$ is defined by
\[
 \langle\rho_jF,\psi\rangle=\langle F,\rho_j\psi\rangle.
\]
Put
\[
 S_{c,r}=L_{c,D}^{-1},\qquad S_{e,r}=L^{-1},
\]
where the whole-space inverse is taken with its canonical Sobolev
representative, and define
\[
 Q_rF=\phi_cS_{c,r}(\rho_cF)+\phi_eS_{e,r}(\rho_eF).
\]
The Sobolev inequality, cutoff multiplication, and
Lemma~\ref{lem:homogeneous-model-inverses} show that
\[
 Q_r:\dot W^{-1,r}_D(\Omega)\longrightarrow\dot W^{1,r}_0(\Omega)
\]
is bounded.

\emph{Step 2: compactness of the remainder.}
Straightforwardly, the form product rule yields
\begin{align*}
 L_DQ_rF-F
 &=\sum_{j\in\{c,e\}}[L_D,\phi_j]S_{j,r}(\rho_jF),\\
 Q_rL_Dv-v
 &=-\sum_{j\in\{c,e\}}\phi_jS_{j,r}[L_D,\rho_j]v.
\end{align*}
Here
\[
 [L_D,\phi]v
 =-\diver(vA\nabla\phi)-A\nabla v\mathbin{\cdo}\nabla\phi
\]
in the form sense.

We check compactness of the two terms on the commutator separately.  For a bounded sequence of model solutions, say $\{v_k\}$, local Rellich compactness makes \(\{v_k A\nabla\phi\}\) relatively compact in \(L^r\) on the bounded transition set. The bounded map
\(H\mapsto-\diver H\) on $L^r(K;\mathbb C^n) \to \dot W^{-1,r}_D(\Omega)$ therefore makes the divergence part compact in the negative space.  The scalar coefficient
\(-A\nabla v\mathbin{\cdo}\nabla\phi\) is only bounded in \(L^r(K)\).
The compact inclusion \(L^r(K)\to\dot W^{-1,r}_D(\Omega)\), and its model
versions, makes the scalar part compact.  The same argument applies to
the cutoffs \(\rho_j\).  Hence both \(L_DQ_r-I\) and \(Q_rL_D-I\) are
compact.

\emph{Step 3: invertibility.}
Atkinson's theorem \cite{Atkinson51} shows that
\[
 L_D:\dot W^{1,r}_0(\Omega)\longrightarrow\dot W^{-1,r}_D(\Omega)
\]
is Fredholm for every \(r\in I\).  These are compatible realizations of
the same form operator on compatible complex interpolation scales.  The
scales have the common smooth core and the usual intersection property.
The interpolation theorem of Kalton and Mitrea \cite[Theorem~2.9]{KaltonMitrea98} therefore makes the index locally constant on \(I\).  Since \(I\) is connected and homogeneous
Lax--Milgram makes the realization at \(r=2\) an isomorphism, hence,
\[
 \operatorname{ind}L_{D,r}=\operatorname{ind}L_{D,2}=0,
 \qquad r\in I.
\]
Suppose that \(v\in\dot W^{1,p}_0(\Omega)\) and \(L_Dv=0\).  The left
parametrix identity shows
\[
 v=\sum_{j\in\{c,e\}}\phi_jS_{j,p}C_j(v),
 \qquad C_j(v)=[L_D,\rho_j]v.
\]
Each source is supported in $K$ and has the form
\[
 C_j(v)=-\diver(vA\nabla\rho_j)
         -A\nabla v\mathbin{\cdo}\nabla\rho_j.
\]
Since \(p>2\), local Sobolev and finite-measure inclusions imply
\[
 vA\nabla\rho_j\in L^2,
 \qquad
 A\nabla v\mathbin{\cdo}\nabla\rho_j
 \in L^p(K)\subset L^{2n/(n+2)}(K).
\]
The second inclusion places the scalar term in the energy negative space
by the homogeneous Sobolev inequality.  Thus each \(C_j(v)\) belongs to
both the exponent-\(p\) and energy negative spaces of the corresponding
model.  Remark~\ref{rem:model-inverse-compatibility} promotes every model
solution in the last display to an energy solution.  Multiplication by
\(\phi_j\) preserves the energy space; hence
\(v\in\dot W^{1,2}_0(\Omega)\).

The equation holds in the energy dual, so testing with \(v\) gives
\[
 0=\int_\Omega
 A\nabla v\mathbin{\cdo}\nabla\overline v
 \geq\lambda\|\nabla v\|_2^2.
\]
Therefore \(v=0\).  Since the Fredholm index at \(p\) is zero, \(L_D\) is onto.  The bounded inverse theorem
proves \eqref{eq:homogeneous-isomorphism}.

\emph{Step 4: compatibility.}
Let
\(F\in\dot W^{-1,p}_D(\Omega)\cap\dot W^{-1,2}_D(\Omega)\), and let \(v\)
be the \(p\)-solution.  The left identity gives
\[
 v=Q_pF+
 \sum_{j\in\{c,e\}}\phi_jS_{j,p}[L_D,\rho_j]v.
\]
The localized data in \(Q_pF\) belong to both model negative spaces, and
the preceding compact-support argument puts every commutator source in
both negative spaces as well.  Remark~\ref{rem:model-inverse-compatibility}
therefore shows that \(v\in\dot W^{1,2}_0(\Omega)\).  It satisfies the
energy equation, so Lax--Milgram uniqueness identifies it with the energy
solution.
\end{proof}

\subsection{Neumann boundary condition}

The Neumann proof uses the same Fredholm tools and three additional facts.
First, the normalization in \eqref{eq:homogeneous-local-poincare} turns a
bounded family of quotient classes into a bounded family in
\(W^{1,r}(K)\), so local Rellich applies.  Second,
\[
 L^r_0(K):=\left\{b\in L^r(K):\int_Kb=0\right\}
 \longrightarrow\dot W^{-1,r}_N(\Omega)
\]
is compact.  Indeed, it is the Banach adjoint of the compact normalized
restriction
\[
 \dot W^{1,r'}(\Omega)/\mathbb C
 \longrightarrow L^{r'}(K)/\mathbb C.
\]
Third, the Bogovski\u\i\ operator on a bounded connected Lipschitz domain
\(K\) maps \(L^2_0(K)\) boundedly into \(W^{1,2}_0(K;\mathbb C^n)\) and
satisfies \cite[Theorem~2.5]{GeissertHeckHieber06}
\[
 \diver(\mathcal B_Kb)=b.
\]
For complex-valued \(b\), apply the real Bogovski\u\i\ operator to its
real and imaginary parts.

\begin{theorem}
\label{thm:neumann-exterior-inverse}
Let \(n\geq2\) and \(2<p<P_N\).  Then
\[
 L_N:\dot W^{1,p}(\Omega)/\mathbb C
 \longrightarrow\dot W^{-1,p}_N(\Omega)
\]
is an isomorphism.  Thus every \(F\in\dot W^{-1,p}_N(\Omega)\) has a
unique solution modulo constants \(v\in\dot W^{1,p}(\Omega)/\mathbb C\)
of \(L_Nv=F\), and
\[
 \|\nabla v\|_p\leq C_p\|F\|_{\dot W^{-1,p}_N(\Omega)}.
\]
If \(F\) also belongs to \(\dot W^{-1,2}_N(\Omega)\), then the
\(p\)-solution agrees modulo constants with the homogeneous Lax--Milgram
solution.
\end{theorem}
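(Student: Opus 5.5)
We follow the four steps of the proof of Theorem~\ref{thm:dirichlet-exterior-inverse}, modifying each to handle the constant modes. Choose $\varepsilon>0$ with $I=(2-\varepsilon,p+\varepsilon)$ satisfying $p+\varepsilon<P_N$ and $(2-\varepsilon)'<P_N$. Then Lemma~\ref{lem:homogeneous-model-inverses}, together with its final duality remark, supplies the whole-space inverse $S_{e,r}=L^{-1}$ on $\dot W^{-1,r}(\R^n)$ and the core inverse $S_{c,r}=L_{c,N}^{-1}$ on $W_N^{-1,r}(\Omega_c)$ for every $r\in I$.

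\textbf{Parametrix.} The localization $\rho_jF$, with $\langle\rho_jF,\psi\rangle=\langle F,\rho_j\psi\rangle$, no longer annihilates constants: one only has $\langle\rho_cF,1\rangle=-\langle\rho_eF,1\rangle=\langle F,\rho_c\rangle$. We therefore balance the localized data. Fix $b_0\in C_c^\infty$ supported in the overlap $\{\phi_c=\phi_e=1\}\cap\mathcal A_2$ with $\int b_0=1$, and set
\[
 F_c=\rho_cF-\langle F,\rho_c\rangle b_0,
 \qquad
 F_e=\rho_eF+\langle F,\rho_c\rangle b_0 .
\]
Each of these kills constants, and $F_c+F_e=F$. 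The functional $F\mapsto\langle F,\rho_c\rangle$ is bounded on $\dot W^{-1,r}_N(\Omega)$ because the class of $\rho_c$ lies in $\dot W^{1,r'}(\Omega)/\mathbb C$. Moreover, $F_e$ belongs to $(\dot W^{1,r'}(\R^n)/\mathbb C)^*$, since $\rho_e\psi$ only sees $\psi$ away from the obstacle and we can extend test functions by a normalized extension. Define
\[
 Q_rF=\phi_cS_{c,r}F_c+\phi_eS_{e,r}F_e .
\]
We fix the representatives by the $\eta$-normalization of \eqref{eq:homogeneous-local-poincare}, so that cutoff multiplication is bounded from the quotient spaces into $\dot W^{1,r}(\Omega)/\mathbb C$.

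\textbf{Compactness of the remainders.} As in the Dirichlet proof, the product rule gives
\[
 L_NQ_rF-F=\sum_j[L_N,\phi_j]S_{j,r}F_j ,
\]
because $\phi_j=1$ on $\supp b_0$, which makes the balancing terms cancel. For the other side we obtain
\[
 Q_rL_Nv-v=-\sum_j\phi_jS_{j,r}\widetilde C_j(v)+(\text{finite rank}),
\]
where $\widetilde C_j$ is the balanced commutator. The divergence parts $-\diver(vA\nabla\phi)$ are compact by normalized Rellich. Each commutator $[L_N,\phi]v$ annihilates constants, since $\phi\cdot 1$ is a test function with $\langle[L_N,\phi]v,1\rangle=-\int A\nabla v\cdot\nabla\phi+\int A\nabla v\cdot\nabla\phi=0$. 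Hence its scalar part differs from a mean-zero element of $L^r_0(K)$ by a rank-one term, and the compact embedding $L^r_0(K)\to\dot W^{-1,r}_N(\Omega)$ applies. Atkinson's theorem then makes $L_N$ Fredholm for every $r\in I$. By the Kalton--Mitrea theorem the index is constant on $I$, and it equals $0$ at $r=2$ by Lax--Milgram on $\dot W^{1,2}(\Omega)/\mathbb C$.

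\textbf{Kernel and compatibility.} Suppose $L_Nv=0$ with $v$ in the $p$-space. Writing $v$ through the left parametrix expresses it as $\sum_j\phi_jS_{j,p}$ applied to compactly supported balanced sources. We must show that these sources lie in the energy dual.

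\emph{This is the main obstacle when $n=2$.} In that dimension a scalar $L^p(K)$ term is not in $\dot W^{-1,2}$ unless it has mean zero, since constants are not controlled. Here the balancing and the Bogovski\u\i\ operator do the work. A mean-zero scalar $b\in L^2_0(K)$ equals $\diver(\mathcal B_Kb)$ with $\mathcal B_Kb\in W^{1,2}_0$, so $b=-\diver(-\mathcal B_Kb)$ belongs to $\dot W^{-1,2}$ of each model. The term $vA\nabla\rho_j$ lies in $L^2$ because $p>2$ and $K$ is bounded.

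Remark~\ref{rem:model-inverse-compatibility} then places each model solution, and hence $v$ modulo constants, in $\dot W^{1,2}(\Omega)/\mathbb C$. Testing the equation with $v$ gives $\nabla v=0$, so $v=0$ in the quotient. Index zero yields surjectivity, and the bounded inverse theorem gives the gradient estimate. The compatibility statement follows by the same argument as Step~4 of the Dirichlet proof, now applied to the balanced data $F_c,F_e$. The remaining point to verify is that the balancing constants are themselves compatible across exponents. This holds because $\langle F,\rho_c\rangle$ is defined independently of $r$.
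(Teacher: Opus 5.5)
Your outline follows the paper's proof almost step for step. Your bump $b_0$ is the paper's $\theta$, and your balancing constant $\langle F,\rho_c\rangle$ equals $-a(F)$ with $a(F)=\langle F,\rho_e\rangle$. The parametrix $Q_r$, the Atkinson/Kalton--Mitrea index argument, the Bogovski\u\i\ step in the kernel, and the compatibility step all coincide with the paper's.

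One step, however, is false as written, and it is exactly where the Neumann case departs from the Dirichlet one. You assert that every commutator $[L_N,\phi]v$ annihilates constants. By \eqref{eq:form-commutator}, testing with $\psi=1$ kills only the divergence part, so
\[
 \langle[L_N,\phi]v,1\rangle=-\int_\Omega A\nabla v\mathbin{\cdo}\nabla\phi .
\]
This is nonzero in general; for $v=\phi$ it is at most $-\lambda\|\nabla\phi\|_2^2$. An unbalanced commutator is therefore not an element of $\dot W^{-1,r}_N(\Omega)$, nor of the model quotient duals on which $S_{e,r}$ and $S_{c,r}$ act. Splitting off a rank-one multiple of $b_0$ does not help, because $b_0$ itself does not annihilate constants. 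For the same reason the mean-zero requirement is not special to $n=2$: every Neumann and whole-space negative space here is the dual of a quotient by constants.

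The zero mean has to come from the balancing, as in the paper.
\begin{itemize}
\item \emph{Right remainder.} With $v_j=S_{j,r}F_j$,
\[
 \int A\nabla v_j\mathbin{\cdo}\nabla\phi_j=\langle F_j,\phi_j\rangle=\langle F_j,1\rangle=0,
\]
because $\phi_j=1$ on $\supp F_j\subset\supp\rho_j\cup\supp b_0$.
\item \emph{Left remainder.} The balanced sources must be written out explicitly as
$\widetilde C_c(v)=[L_{c,N},\rho_c]v+\langle L_Nv,\rho_c\rangle b_0$ and $\widetilde C_e(v)=[L,\rho_e]v-\langle L_Nv,\rho_c\rangle b_0$, so that $F_j=L_j(\rho_jv)-\widetilde C_j(v)$. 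Their zero mean follows from $\int A\nabla v\mathbin{\cdo}\nabla\rho_c=\langle L_Nv,\rho_c\rangle=-\langle L_Nv,\rho_e\rangle$.
\item \emph{Kernel step.} Here $L_Nv=0$, so the balancing term vanishes and the commutators themselves have mean zero.
\end{itemize}
With these identities in place of your computation, the scalar parts lie in $L^r_0(K)$, respectively $L^2_0(K)$. The compact embedding $L^r_0(K)\to\dot W^{-1,r}_N(\Omega)$ and the Bogovski\u\i\ representation then apply, and your argument closes exactly as the paper's does.
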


\begin{proof}
Choose \(\varepsilon>0\) such that
\[
 J=(2-\varepsilon,p+\varepsilon),
 \qquad p+\varepsilon<P_N,
 \qquad (2-\varepsilon)'<P_N.
\]
The model inverses and the compatible interpolation scales are available
throughout \(J\).

\emph{Step 1: parametrix construction.}
Use the cutoffs \(\rho_c,\rho_e,\phi_c,\phi_e\), enlarging their transition
neighborhoods if necessary.  Choose \(\theta\in C_c^\infty(\Omega)\) in
the overlap such that $\int_\Omega\theta=1$, $\phi_c=\phi_e=1$ on $\supp\theta$. For \(r\in J\) and \(F\in\dot W^{-1,r}_N(\Omega)\), set
\[
 a(F)=\langle F,\rho_e\rangle,
\]
which is well defined because \(\nabla\rho_e\) is compactly supported.
Moreover, $|a(F)|\leq \|F\|_{\dot W^{-1,r}_N} \|\nabla\rho_e\|_{r'}$.

Define 
\begin{align*}
 \langle F_e,\psi\rangle
 =\langle F,\rho_e\psi\rangle-a(F)\int_\Omega\theta\overline\psi, \qquad
 \langle F_c,\psi\rangle
 =\langle F,\rho_c\psi\rangle+a(F)\int_\Omega\theta\overline\psi.
\end{align*}
Note that if \(\psi\) is
replaced by \(\psi+c\), the above two terms change by equal and
opposite multiples of \(\overline c\).  Hence \(F_e\) and \(F_c\) are well defined on
classes modulo constants and
\[
 F_e(1)=a(F)-a(F)=0,
 \qquad
 F_c(1)=\langle F,1-\rho_e\rangle+a(F)=0.
\]
Moreover, normalized Poincar\'e and cutoff multiplication show that the maps
\[
 F\longmapsto F_e:
 \dot W^{-1,r}_N(\Omega)\longrightarrow
 \dot W^{-1,r}(\mathbb R^n),
 \qquad
 F\longmapsto F_c:
 \dot W^{-1,r}_N(\Omega)\longrightarrow
 W^{-1,r}_N(\Omega_c)
\]
are bounded.

Let \(S_{e,r}\) and \(S_{c,r}\) be the model inverses from
Lemma~\ref{lem:homogeneous-model-inverses}, with representatives normalized
by fixed averages on the transition sets.  For simplicity, write
\(L_e=L\) and \(L_c=L_{c,N}\).  Put
\[
 Q_rF=\phi_eS_{e,r}F_e+\phi_cS_{c,r}F_c.
\]

\emph{Step 2: compactness of the right remainder.}
Put \(v_j=S_{j,r}F_j\).  Since \(L_jv_j=F_j\), we have
\[
 L_NQ_rF
 =\phi_eF_e+\phi_cF_c
  +\sum_{j\in\{c,e\}}[L_N,\phi_j]v_j.
\]
The function \(\phi_j\) equals one on the support of \(F_j\), which is
contained in the union of the supports of \(\rho_j\) and \(\theta\).
Hence \(\phi_jF_j=F_j\).  Moreover, for every test function \(\psi\),
\[
 \begin{aligned}
 \langle F_e+F_c,\psi\rangle
 =\langle F,(\rho_e+\rho_c)\psi\rangle
   +[-a(F)+a(F)]\int_\Omega\theta\overline\psi=\langle F,\psi\rangle.
 \end{aligned}
\]
Thus the two multiples of \(\theta\) cancel and the principal terms add
to \(F\).  Consequently,
\[
 L_NQ_rF-F
 =\sum_{j\in\{c,e\}}[L_N,\phi_j]S_{j,r}F_j.
\]
The scalar coefficient of the \(j\)-th commutator is $b_j=-A\nabla v_j\mathbin{\cdo}\nabla\phi_j$. It has zero integral:
\[
 \int b_j
 =-\int A\nabla v_j\mathbin{\cdo}\nabla\phi_j
 =-\langle F_j,\phi_j\rangle
 =-\langle F_j,1\rangle=0.
\]
The last equality uses that \(\phi_j=1\) on the support of \(F_j\).
Normalized local Poincar\'e and Rellich compactness make the divergence part of the commutator compact. The compact map
\(L^r_0(K)\to\dot W^{-1,r}_N(\Omega)\) controls the scalar part. Hence \(L_NQ_r-I\) is compact.

\emph{Step 3: compactness of the left remainder.}
Before writing the commutators, choose the representative of
each input class \(v\) by \(\int\eta v=0\).  Put \(F=L_Nv\) and define
\[
 C_e(v)=[L,\rho_e]v+a(F)\theta,
 \qquad
 C_c(v)=[L_{c,N},\rho_c]v-a(F)\theta.
\]
The product rule gives $F_j=L_j(\rho_jv)-C_j(v)$ and $C_j(v)(1)=0$. More explicitly, $C_j(v)=-\diver(vA\nabla\rho_j)+b_j(v)$, where
\[
 b_e(v)=-A\nabla v\mathbin{\cdo}\nabla\rho_e+a(F)\theta,
 \qquad
 b_c(v)=-A\nabla v\mathbin{\cdo}\nabla\rho_c-a(F)\theta.
\]
Since \(\rho_c+\rho_e=1\), it is clear that
\[
 a(L_Nv)=\int A\nabla v\mathbin{\cdo}\nabla\rho_e,
 \qquad
 \int A\nabla v\mathbin{\cdo}\nabla\rho_c=-a(L_Nv).
\]
Consequently, $\int b_e(v)=\int b_c(v)=0$.

Because the model inverses act on quotient classes, their fixed
normalizations yield
\[
 S_{j,r}L_j(\rho_jv)=\rho_jv-c_j(v),
\]
where \(c_j\) is a bounded linear functional.  Thus
\[
 Q_rL_Nv-v
 =-\sum_{j\in\{c,e\}}\phi_jS_{j,r}C_j(v)
  -\sum_{j\in\{c,e\}}c_j(v)\phi_j.
\]
The first sum is compact by a similar argument before while the second sum is finite rank. Hence \(Q_rL_N-I\) is compact.

\emph{Step 4: invertibility.}
Now, Atkinson's theorem guarantees that $L_N:\dot W^{1,r}(\Omega)/\mathbb C \longrightarrow\dot W^{-1,r}_N(\Omega)$ is Fredholm for every \(r\in J\). Then Kalton--Mitrea theorem and homogeneous Lax--Milgram at \(r=2\) imply $\operatorname{ind}L_{N,r}=\operatorname{ind}L_{N,2}=0$ for all $r\in J$.

Suppose that \(v\in\dot W^{1,p}(\Omega)/\mathbb C\) and \(L_Nv=0\), and use the fixed normalization above.  The left identity expresses \(v\),
modulo constants, through the compactly supported sources \(C_j(v)\) and the finite-rank cutoff terms.  Since \(p>2\),
\[
 vA\nabla\rho_j\in L^2(K;\mathbb C^n),
 \qquad b_j(v)\in L^2_0(K).
\]
Choose the bounded connected Lipschitz domain \(K\) so that it
contains all these supports and
\(\overline K\subset\Omega\cap\Omega_c\).  The Bogovski\u\i\
operator produces \(B_j\in W^{1,2}_0(K;\mathbb C^n)\) such that
\[
 \diver B_j=b_j(v).
\]
After extension by zero, $C_j(v)=-\diver\bigl(vA\nabla\rho_j-B_j\bigr)$. Thus each \(C_j(v)\) belongs to the energy negative space on the end, the core, and the exterior domain.  Remark~\ref{rem:model-inverse-compatibility} promotes every model solution in the left identity to the energy space. The finite-rank cutoff functions also belong to that space.  Therefore
\[
 v\in\dot W^{1,2}(\Omega)/\mathbb C.
\]
Testing with \(v\) gives
\[
 0=\operatorname{Re}\int_\Omega
 A\nabla v\mathbin{\cdo}\nabla\overline v
 \geq\lambda\|\nabla v\|_2^2.
\]
Thus \(v\) is constant and represents zero in the quotient.  Index zero
implies surjectivity, and the bounded inverse theorem proves the gradient
estimate.

\emph{Step 5: compatibility.}
Let
\(F\in\dot W^{-1,p}_N(\Omega)\cap\dot W^{-1,2}_N(\Omega)\), and apply the
left identity to its \(p\)-solution.  The balanced localized data belong
to both model negative spaces.  The zero-mean scalar commutators have the
\(L^2\)-divergence representation just constructed, and the divergence
coefficients are locally in \(L^2\).  Remark~\ref{rem:model-inverse-compatibility}
therefore promotes the \(p\)-solution to
\(\dot W^{1,2}(\Omega)/\mathbb C\).  Lax--Milgram uniqueness modulo
constants identifies it with the energy solution.
\end{proof}

From now on, \(L_B^{-1}\) denotes the zero-energy inverse furnished by the
appropriate theorem above. Its
compatibility with the energy solution will be used in the construction
of the correction term.

\section{Error test}
\label{sec:forward-caccioppoli}

% Section~\ref{sec:forward-parametrix} controls the explicit gradient term
% \(\mathcal R_Bu\).  We now construct and estimate the correction.

For a scalar spectral parameter \(\sigma>0\), the Dirichlet Green kernel
of \(-\partial_t^2+\sigma\) on \((0,\infty)\) is
\[
 \mathcal G_\sigma(t,s)
 =\frac{1}{2\sqrt\sigma}
  \left(e^{-\sqrt\sigma|t-s|}-e^{-\sqrt\sigma(t+s)}\right),
 \qquad s,t>0.
\]
A direct integration in \(t\) gives
\[
 \int_0^\infty\mathcal G_\sigma(t,s)\,\dd t
 =\frac{1-e^{-s\sqrt\sigma}}{\sigma}.
\]
This calculation motivates the following order of construction.  For each \(s>0\), define the
balanced error on the homogeneous test space by
\begin{equation}\label{eq:pointwise-balanced-error}
 \langle G_{B,s}u,\psi\rangle
 :=\langle F_{B,s}u,(I-e^{-s\sqrt{L_B}})\psi\rangle.
\end{equation}
This is the rigorous meaning of the formal expression
\[
 G_{B,s}u=(I-e^{-s\sqrt{L_B}})F_{B,s}u
\]
when \(F_{B,s}u\) is form-valued.  For Neumann data,
\((I-e^{-s\sqrt{L_N}})\psi\) is unchanged when a constant is added to
\(\psi\).  Hence \eqref{eq:pointwise-balanced-error} is well defined on
the quotient by constants, even though \(F_{N,s}u\) itself need not
annihilate constants.

Once we have proved that \(G_{B,s}u\) belongs to
\(\dot W^{-1,p}_B(\Omega)\), the inverse constructed in
Section~\ref{sec:homogeneous-invertibility} allows us to set
\begin{equation}\label{eq:pointwise-correction}
 g_{B,s}u=L_B^{-1}G_{B,s}u.
\end{equation}
We shall then integrate \(g_{B,s}u\) in \(s\).  Thus the exterior
operator is inverted before the height variable is integrated.  The
finite-cylinder identity in Section~\ref{sec:proof-forward} will show that
\[
 \nabla L_B^{-1/2}u
 =\mathcal R_Bu-\int_0^\infty\nabla g_{B,s}u\,\dd s.
\]
Notice that
\(g_{B,s}u\) is the contribution of the error produced at height
\(s\) after the Green kernel has been integrated in its first variable;
it is not a pointwise solution of the cylinder equation at height \(s\).

Throughout this section, \(p>2\) lies in one of the ranges of Theorem~\ref{thm:intro-forward}; in particular, \(1<p'<2\). The next lemma links the compactly supported \(W_B^{-1,r}\)-errors from
Section~4 with the homogeneous dual spaces used here.  In its estimates,
\(\psi\in\dot W^{1,p'}_0(\Omega)\) for \(B=D\) and
\(\psi\in\dot W^{1,p'}(\Omega)/\mathbb C\) for \(B=N\).

\begin{lemma}
\label{lem:homogeneous-Poisson-tests}
Let \(s>0\), and let \(\psi\) belong to the appropriate homogeneous
test space at exponent \(p'\).  Put $w_s=(I-e^{-s\sqrt{L_B}})\psi$. Then the following estimates hold.

\begin{enumerate}[label=\textup{(\roman*)},leftmargin=*]
\item The Poisson semigroup is uniformly bounded on the homogeneous
test space:
\[
 \|\nabla e^{-s\sqrt{L_B}}\psi\|_{p'}
 \leq C\|\nabla\psi\|_{p'}.
\]

\item For every fixed bounded Lipschitz transition neighborhood
\(K\subset\overline\Omega\),
\begin{equation}\label{eq:homogeneous-Poisson-local}
 \|w_s\|_{W^{1,p'}(K\cap\Omega)}
 \leq C_K\|\nabla\psi\|_{p'}.
\end{equation}
Moreover,
\begin{equation}\label{eq:homogeneous-Poisson-global}
 \|w_s\|_{p'}
 \leq Cs\|\nabla\psi\|_{p'}.
\end{equation}

\item Let \(p\leq r<\infty\).  If a form
\(F\in W_B^{-1,r}(\Omega)\) is supported in \(K\), then
\begin{equation}\label{eq:compact-form-test}
 |\langle F,w_s\rangle|
 \leq C_K\|F\|_{W_B^{-1,r}(\Omega)}
       \|\nabla\psi\|_{p'}.
\end{equation}
If \(F=f+\diver H\), where \(f\in L^r(K)\) and
\(H\in L^r(K;\mathbb R^n)\), then
\[
 |\langle F,w_s\rangle|
 \leq C_K\bigl(\|f\|_{L^r(K)}+\|H\|_{L^r(K)}\bigr)
       \|\nabla\psi\|_{p'}.
\]
\end{enumerate}
\end{lemma}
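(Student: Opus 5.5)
The plan is to derive everything from the low-exponent Riesz transform \eqref{eq:exterior-low-Riesz} at exponent $p'\in(1,2)$, together with its dual (reverse) counterpart, and to use the fact that the Poisson semigroup and the quantity $s\sqrt{L_B}e^{-s\sqrt{L_B}}$ are uniformly bounded on $L^{p'}$.

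For (i), we would first work with $\psi$ in the smooth form core and write
\[
 \nabla e^{-s\sqrt{L_B}}\psi=(\nabla L_B^{-1/2})\,e^{-s\sqrt{L_B}}\,L_B^{1/2}\psi .
\]
Three ingredients then combine. The Riesz bound \eqref{eq:exterior-low-Riesz} holds at $r=p'\le2$. The Poisson semigroup is contractive on $L^{p'}$. The reverse inequality $\|L_B^{1/2}\psi\|_{p'}\le C\|\nabla\psi\|_{p'}$ is obtained by duality from the Riesz bound at exponent $p'$ (a forward bound at $p$ is not needed): one pairs $L_B^{1/2}\psi$ with $g\in L^p\cap L^2$, writes the pairing as $\int A\nabla\psi\cdot\nabla L_B^{-1/2}g$, and \ldots\ here one would need the Riesz transform at $p$. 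For that reason we would instead use the reverse inequality at $p'<2$: Theorem~\ref{thm:intro-neumann-reverse} for Neumann data and \cite[Theorem~1.3]{JiangLin24} for Dirichlet data. This gives (i) on the core, and density in the homogeneous space extends it. For Neumann data, constants are annihilated, so the estimate is well defined on quotient classes.

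For (ii), the global bound \eqref{eq:homogeneous-Poisson-global} follows by writing
\[
 w_s=\int_0^s\sqrt{L_B}\,e^{-t\sqrt{L_B}}\psi\,\dd t,
\]
using contractivity on $L^{p'}$ and the reverse inequality at $p'$. This gives $\|w_s\|_{p'}\le s\,C\|\nabla\psi\|_{p'}$; in the Neumann case $w_s$ is independent of the constant chosen for $\psi$. For the local bound \eqref{eq:homogeneous-Poisson-local}, the gradient part of $\|w_s\|_{W^{1,p'}(K\cap\Omega)}$ follows from (i), which gives $\|\nabla w_s\|_{p'}\le C\|\nabla\psi\|_{p'}$. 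The $L^{p'}(K)$ part is the main point, since $s$ may be large. For Dirichlet data and $p'<n$ we would use \eqref{eq:exterior-local-Poincare} applied to $w_s$, which lies in the homogeneous Dirichlet space, and this controls $\|w_s\|_{L^{p'}(K)}$ by $\|\nabla w_s\|_{p'}$. For Neumann data the main obstacle is the constant mode: Poincar\'e alone controls only $w_s-m_\eta(w_s)$. To fix the normalization we would choose $\psi$ with $\int\eta\psi=0$ and bound $|m_\eta(e^{-s\sqrt{L_N}}\psi)|$ uniformly. We would first try splitting the two regimes. For $s\le1$, the global bound \eqref{eq:homogeneous-Poisson-global} on the bounded set $K$ suffices. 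For $s\ge1$, we would use that $\eta$ is smooth and compactly supported, pair $e^{-s\sqrt{L_N}}\psi$ with $\eta$, and write $\psi=\psi_{\rm loc}+\psi_{\rm far}$ as in Proposition~\ref{prop:small-pairing}. The local part is controlled by Poincar\'e. For the far part, the Poisson kernel decay gives $e^{-s\sqrt{L_N}}\eta\in L^{p}$, and one uses the mean-zero property: write $e^{-s\sqrt{L_N}}\eta=L_N^{1/2}(\cdots)$ and transfer $L_N^{1/2}$ to $\psi$, reducing again to the reverse inequality at $p'$. If the far part resists in dimension two (where $p'$ Sobolev fails for $p'\ge n$), we would fall back on the Hölder continuity of the Poisson kernel, centered as in \eqref{eq:centered-W12}.

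For (iii), pairing a compactly supported form with $w_s$ only sees $w_s$ on $K$. We would insert a cutoff $\chi$ equal to one on $K$ and note that $\chi w_s\in W_B^{1,r'}$, using $r'\le p'$ and that $K$ has finite measure. This gives
\[
 |\langle F,w_s\rangle|\le\|F\|_{W_B^{-1,r}}\|\chi w_s\|_{W^{1,r'}}\le C_K\|F\|\,\|w_s\|_{W^{1,p'}(K\cap\Omega)},
\]
and (ii) concludes. The explicit version for $F=f+\diver H$ is then \eqref{eq:compact-form-norm}, or direct Hölder on $K$.
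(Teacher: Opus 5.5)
Your arguments for (i), for \eqref{eq:homogeneous-Poisson-global}, for the Dirichlet half of \eqref{eq:homogeneous-Poisson-local}, and for (iii) are sound and follow the paper. These use the reverse inequality at $p'$ together with the low-exponent Riesz bound and contractivity, the integral formula for $w_s$, and a cutoff with H\"older on a bounded set (apply (ii) on a slightly enlarged neighborhood containing $\supp\chi$).

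The gap is the Neumann half of \eqref{eq:homogeneous-Poisson-local} for $s\ge1$. You leave it as a sequence of attempts, and the far-part step fails as stated:
\begin{enumerate}
\item For the $\eta$-normalized representative, $\psi_{\rm far}$ is not in $L^{p'}$ in general, so it cannot be paired with $e^{-s\sqrt{L_N}}\eta\in L^p$.
\item The transfer $\langle\psi,L_N^{1/2}g\rangle=\langle L_N^{1/2}\psi,g\rangle$, with $L_N^{1/2}g=e^{-s\sqrt{L_N}}\eta$, cannot hold for an arbitrary representative. Replacing $\psi$ by $\psi+c$ changes the left side by $c\int_\Omega e^{-s\sqrt{L_N}}\eta=c$, since the Neumann semigroup is conservative, and leaves the right side unchanged. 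The identity is valid only for the representative that vanishes at infinity, i.e.\ the Sobolev representative you set aside.
\item Your reason for setting it aside is wrong. Since $p>2$, you have $p'<2\le n$ in every dimension, including $n=2$, so \eqref{eq:exterior-Sobolev} holds at exponent $p'$ for Neumann data.
\item \eqref{eq:centered-W12} is not a fallback either. It concerns $e^{-s\sqrt L}(\rho_eu)$ with $u\in L^r$, not the Neumann Poisson extension of a function controlled only through $\nabla\psi\in L^{p'}$.
\end{enumerate}

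The missing observation is that there is no constant mode to fix. Because $I-e^{-s\sqrt{L_N}}$ annihilates constants, $w_s$ depends only on the class of $\psi$. Work with $\psi\in C_c^\infty(\overline\Omega)$, or after completion with the unique representative in $L^q$, $q=(p')^*=np'/(n-p')$; it is unique because $|\Omega|=\infty$. Then $\|\psi\|_q\lesssim\|\nabla\psi\|_{p'}$. Contractivity of the Poisson semigroup on $L^q$ and H\"older on the finite-measure set $K\cap\Omega$ give
\[
 \|w_s\|_{L^{p'}(K\cap\Omega)}
 \le|K\cap\Omega|^{1/p'-1/q}\bigl(\|\psi\|_q+\|e^{-s\sqrt{L_B}}\psi\|_q\bigr)
 \lesssim\|\nabla\psi\|_{p'}
\]
uniformly in $s>0$ and for both boundary conditions. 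No splitting in $s$ and no local/far decomposition is needed; this is exactly the paper's argument. Equivalently, your Dirichlet argument goes through unchanged for Neumann data: $w_s\in L^q$ is the decaying representative of its class, and \eqref{eq:exterior-local-Poincare} extends to it by truncation at infinity.
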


\begin{proof}
We first work on the smooth homogeneous test classes.  The reverse
square-root estimates in Theorem~\ref{thm:intro-neumann-reverse} and
Remark~\ref{rem:Dirichlet-reverse} imply $\|L_B^{1/2}\psi\|_{p'}
 \lesssim\|\nabla\psi\|_{p'}$. By \eqref{eq:exterior-low-Riesz}, Poisson contractivity, and commutation
of the spectral multipliers,
\begin{align*}
 \|\nabla e^{-s\sqrt{L_B}}\psi\|_{p'}
 &=\|\nabla L_B^{-1/2}e^{-s\sqrt{L_B}}L_B^{1/2}\psi\|_{p'}\\
 &\lesssim\|e^{-s\sqrt{L_B}}L_B^{1/2}\psi\|_{p'}
 \leq\|L_B^{1/2}\psi\|_{p'}
 \lesssim\|\nabla\psi\|_{p'}.
\end{align*}
This proves part~\textup{(i)} and extends the semigroup continuously to
the homogeneous Dirichlet space and to the homogeneous Neumann space
modulo constants.

The identity
\[
 w_s=\int_0^s e^{-t\sqrt{L_B}}L_B^{1/2}\psi\,\dd t
\]
holds first on the smooth test classes.  In the Neumann case it also
defines an actual \(L^{p'}\)-representative of \(w_s\), independent of
the representative chosen for \(\psi\).  Contractivity and the reverse
square-root estimate show that
\[
 \|w_s\|_{p'}
 \leq\int_0^s\|e^{-t\sqrt{L_B}}L_B^{1/2}\psi\|_{p'}\,\dd t
 \lesssim s\|\nabla\psi\|_{p'}.
\]
This proves \eqref{eq:homogeneous-Poisson-global}.

We next prove the uniform local estimate.  Put
\(q=(p')^*=np'/(n-p')\); since \(p>2\), one has \(p'<n\).
For Dirichlet data use the canonical Sobolev representative.  For
Neumann data, \eqref{eq:exterior-Sobolev} extends by completion and
selects the unique representative \(\psi^\circ\in L^q(\Omega)\) of the
class \([\psi]\), with
\(\|\psi^\circ\|_q\lesssim\|\nabla\psi\|_{p'}\).  Its uniqueness follows
because \(\Omega\) has infinite measure.  In either case, write this
representative again as \(\psi\).  Poisson contractivity on \(L^q\) and
the finite measure of \(K\cap\Omega\) yield
\begin{align*}
 \|w_s\|_{L^{p'}(K\cap\Omega)}
 &\leq |K\cap\Omega|^{1/p'-1/q}
 \bigl(\|\psi\|_q+\|e^{-s\sqrt{L_D}}\psi\|_q\bigr)\\
 &\lesssim_K\|\psi\|_q
 \lesssim\|\nabla\psi\|_{p'}.
\end{align*}
This argument is valid for both boundary conditions because the Neumann
Poisson semigroup is contractive on \(L^q\), while
\(I-e^{-s\sqrt{L_N}}\) annihilates constants.  

Finally, part~\textup{(i)} gives
\[
 \|\nabla w_s\|_{p'}
 \leq\|\nabla\psi\|_{p'}
     +\|\nabla e^{-s\sqrt{L_B}}\psi\|_{p'}
 \lesssim\|\nabla\psi\|_{p'}.
\]
This completes the proof of part~\textup{(ii)}.

For part~\textup{(iii)}, enlarge \(K\), if necessary, to a bounded
connected Lipschitz transition neighborhood \(K_1\), and choose a
compactly supported Lipschitz function \(\chi\) such that
\(\chi=1\) near \(\supp F\) and \(\supp\chi\subset K_1\).  Since \(F\)
is supported in \(K\), $\langle F,w_s\rangle=\langle F,\chi w_s\rangle$. Multiplication by \(\chi\) preserves the relevant boundary condition. Moreover, \(r\geq p\) implies \(r'\leq p'\).  Hence
\[
 \|\chi w_s\|_{W^{1,r'}(\Omega)}
 \lesssim\|w_s\|_{W^{1,p'}(K_1\cap\Omega)}
 \lesssim_K\|\nabla\psi\|_{p'}.
\]
Testing \(F\) against \(\chi w_s\) proves
\eqref{eq:compact-form-test}.  If \(F=f+\diver H\), then
\[
 \langle F,w_s\rangle
 =\int_K f\,\overline{w_s}
  -\int_K H\mathbin{\cdo}\nabla\overline{w_s},
\]
and H\"older's inequality together with
\eqref{eq:homogeneous-Poisson-local} proves the coefficient estimate.
\end{proof}

% We now estimate the functional that will be inverted.  Notice that
% \(F_{N,s}u\) itself need not annihilate constants.  The test
% \((I-e^{-s\sqrt{L_N}})\psi\), however, is independent of the representative
% of \(\psi\), so the functional below is well defined on the homogeneous
% Neumann space. 

We shall use the following Davies--Gaffney estimates. The proof is included for the sake of completeness.

\begin{lemma}\label{lem:automatic-ST}
Let \(B\in\{D,N\}\), let \(2<p<P_B\), and let \(T\) denote either
\(L\) on \(\mathbb R^n\) or \(H_{c,B}=1+L_{c,B}\) on the bounded core.
If measurable sets \(E,F\) satisfy \(d(E,F)\ge d>0\), then
\begin{equation}\label{eq:separated-short-time-conclusion}
 \|e^{-s\sqrt T}(\mathbf 1_Fg)\|_{L^p(E)}
 +\|\nabla e^{-s\sqrt T}(\mathbf 1_Fg)\|_{L^p(E)}
 \le C_{p,d}\,s\,\|g\|_{L^p},
 \qquad 0<s\le1 .
\end{equation}
\end{lemma}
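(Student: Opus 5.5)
The plan is to treat the two terms in \eqref{eq:separated-short-time-conclusion} by different mechanisms. The value term is a kernel estimate. The gradient term comes from Caccioppoli on a neighbourhood of \(E\), followed by a localized \(L^p\) upgrade that uses the Riesz bound at an exponent \(p<r<P_B\).

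\emph{Value term.} For \(T=L\), the Poisson kernel bound recorded after the subordination formula gives \(|p_s(x,y)|\le Cs(s+|x-y|)^{-n-1}\). For \(x\in E\) and \(y\in F\) we have \(|x-y|\ge d\), so the off-diagonal kernel \(\mathbf 1_E(x)p_s(x,y)\mathbf 1_F(y)\) is dominated by \(Cs\,(d+|x-y|)^{-n-1}\). This majorant lies in \(L^1\) uniformly in \(x\) and in \(y\), so Schur's test gives \(\|e^{-s\sqrt T}(\mathbf 1_Fg)\|_{L^p(E)}\le C_ds\|g\|_p\) for every \(1\le p\le\infty\). For \(T=H_{c,B}\) the same argument runs with the shifted core kernels, whose short-time bound has the same form.

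\emph{Gradient term.} I do not expect pointwise gradient kernel bounds, since the coefficients are only measurable. Instead I use the factorization from Lemma~\ref{lem:homogeneous-Poisson-tests}\,(i):
\[
 \nabla e^{-s\sqrt T}=\bigl(\nabla T^{-1/2}\bigr)\,\sqrt T e^{-s\sqrt T}.
\]
The first factor is bounded on \(L^r\) for \(r<P_B\), by the definition of \(p_L\), respectively \(p_{c,B}\). This alone only gives \(s^{-1}\), not \(s\), so locality must be used as well. Cover \(E\) by the \(d/2\)-neighbourhood \(E'\), and take a Lipschitz cutoff \(\chi\) equal to \(1\) on \(E\), supported in \(E'\), with \(|\nabla\chi|\lesssim d^{-1}\). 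Set \(v=e^{-s\sqrt T}(\mathbf 1_Fg)\). Then
\[
 \chi\nabla v=\nabla(\chi v)-v\nabla\chi,
 \qquad
 (-\partial_s^2+T)(\chi v)=[T,\chi]v,
\]
and \(v\) itself solves \((-\partial_s^2+T)v=0\) on the cylinder. Near \(E'\) the data vanish, because \(\dist(E',F)\ge d/2\).

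I plan to run a cylinder Caccioppoli argument in \((s,x)\), a version of Lemma~\ref{lem:local-caccioppoli} on \((s/2,2s)\times E'\) for the degenerate elliptic operator \(-\partial_s^2+T\). The \(L^p\) version would be obtained by reverse H\"older (Meyers/Gehring self-improvement). The key point is that the exponent range \(p<P_B\) is exactly the range where \(L^p\) gradient reverse H\"older inequalities hold for \(T\)-harmonic functions, via the Shen-type characterization underlying \(p_L\) and \(p_{c,B}\). That gives
\[
 \|\nabla v\|_{L^p(E)}\lesssim_d \sup_{t\in(s/2,2s)}\|v(t)\|_{L^p(E')}
 +\text{(}\partial_s\text{-terms)}.
\]
Each \(\partial_s^k v\) has kernel bounded by \(C(s+|x-y|)^{-n-k}\) times \(s\) or \(1\), again off-diagonal of size \(O(s)\) for even orders and \(O(1)\) for odd orders. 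I will handle the odd orders by integrating in \(s\) from \(0\), where \(v(0)=\mathbf 1_Fg\) vanishes on \(E'\). So \(\|v(t)\|_{L^p(E')}\lesssim_d s\|g\|_p\) for \(t\sim s\), and the \(\partial_s\) terms are likewise \(O(s)\).

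\emph{Main obstacle.} The delicate step is this passage from the \(L^2\) Caccioppoli estimate to an \(L^p\) gradient bound with \(p>2\). I would first try the alternative that avoids reverse H\"older. Apply the factorization to \(\chi v\) in the time-integrated form \(v(s)=\int_s^\infty(-\partial_tv)\), and use boundedness of \(\nabla T^{-1/2}\) on \(L^r\), \(p<r<P_B\), together with the \(L^r\) off-diagonal smallness of \(\sqrt Te^{-t\sqrt T}\) coming from the odd-order kernel estimates. The commutator error would be controlled by the \(L^2\) Caccioppoli bound, and Gaussian off-diagonal decay would then be interpolated between \(2\) and \(r\). If this fails, I fall back on the Shen/Meyers reverse H\"older route described above.
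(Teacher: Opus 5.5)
Your value-term argument is correct. The off-diagonal bound $|p_s(x,y)|\le Cs(s+|x-y|)^{-n-1}$ with Schur's test gives $O(s/d)$ on every $L^p$, for $L$ and, when $0<s\le1$, for the shifted core. This differs from the paper, which treats both terms by Davies--Gaffney plus interpolation, but it works. The genuine gap is the gradient term in $L^p$, $p>2$, where your primary route rests on a reverse H\"older inequality at exponent $p$ that the hypotheses do not provide.

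\begin{itemize}
\item Meyers--Gehring self-improvement reaches only $2+\varepsilon$, with $\varepsilon$ depending on ellipticity, not every $p<P_B$.
\item The Shen characterization behind $p_L$ concerns solutions of $Lu=0$ on balls of $\R^n$. Your $v$ instead solves the $(n+1)$-dimensional equation $(-\partial_s^2+T)v=0$, and at a fixed height $v(s)$ satisfies $Tv(s)=\partial_s^2v(s)\neq0$. Neither situation is covered.
\item For $T=H_{c,B}$ you would also need boundary reverse H\"older estimates near both components of $\partial\Omega_c$. The definition of $p_{c,B}$ does not supply these directly.
\item The scaling claims fail. Integrating from $s=0$ helps only for $v$ itself, since $\partial_sv(0)=-\sqrt T(\mathbf 1_Fg)$ does not vanish on $E'$. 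So the odd-order terms stay $O(1)$, not $O(s)$.
\item A Caccioppoli inequality on $(s/2,2s)\times E'$ uses an $s$-cutoff with gradient of size $s^{-1}$. From $\|v\|=O(s)$ it therefore returns only $O(1)$ for the gradient, averaged over heights, rather than a $d$-dependent multiple of $s$. The $L^2$ bound itself is easy from Lemma~\ref{lem:local-caccioppoli} at fixed $s$ with $f=\partial_s^2v(s)$, which is of even order, but the $L^p$ upgrade remains open.
\end{itemize}

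Your fallback does not close either, because at the Poisson level interpolation loses the gain. Interpolating the $L^2$ off-diagonal bound $O(s)$ with the global $L^r$ bound $O(s^{-1})$ for $\mathbf 1_E\nabla e^{-s\sqrt T}\mathbf 1_F$ gives only $s^{2\theta-1}$, where $1/p=\theta/2+(1-\theta)/r$. Polynomial off-diagonal decay does not survive interpolation.

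The missing idea, which is the paper's, is to interpolate at the heat level, where the decay is Gaussian.
\begin{itemize}
\item Davies--Gaffney and Lemma~\ref{lem:local-caccioppoli} at scale $\sqrt t$, with $f=Tv$ controlled by Gaffney for $tTe^{-tT}$, give $\|\mathbf 1_E\sqrt t\,\nabla e^{-tT}\mathbf 1_F\|_{2\to2}\le Ce^{-cd^2/t}$.
\item The factorization $\sqrt t\,\nabla e^{-tT}=\nabla T^{-1/2}(\sqrt{tT}\,e^{-tT})$, with the model Riesz bound at some $r\in(p,P_B)$ and analyticity on $L^r$, gives a uniform $L^r$ bound.
\item Riesz--Thorin then yields $Ce^{-c_pd^2/t}$ on $L^p$, since the exponential factor absorbs any power loss.
\item Subordination finishes the proof: $\int_0^\infty s\,t^{-2}e^{-c(s^2+d^2)/t}\,\dd t=O_d(s)$.
\end{itemize}
You had the right factorization, but you applied it after subordination instead of before.
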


\begin{proof}
Choose \(r\) with \(p<r<P_B\).  The Davies--Gaffney argument, together
with the form Caccioppoli estimate of Lemma~\ref{lem:local-caccioppoli},
yields the two \(L^2\) off-diagonal families
\[
 \mathbf 1_Ee^{-tT}\mathbf 1_F,\qquad
 \sqrt t\,\mathbf 1_E\nabla e^{-tT}\mathbf 1_F
\]
with norm \(O(e^{-c d^2/t})\).  This is the standard form of the
Gaffney estimates; see \cite[\S3.1, in particular (3.1)]{ACDH04}.
The cutoff proof applies to both pure Dirichlet and pure Neumann form
domains, and the harmless shift in \(H_{c,B}\) only improves the bound.
For completeness, the gradient family follows from
Lemma~\ref{lem:local-caccioppoli}: if
\(v=e^{-tT}(\mathbf1_Fg)\) and \(0<t<d^2/64\), first replace $E$ by
$E_N=E\cap B(0,N)$.  Choose bounded open sets
$E_N\subset U_1$ and $\overline{U_1}\subset U_2$ so that
\[
 \dist(U_1,\R^n\setminus U_2)\simeq\sqrt t,
 \qquad \dist(U_2,F)\ge d/2.
\]
Apply Lemma~\ref{lem:local-caccioppoli} on $U_2$ with right-hand side
$f=Tv$ and $G=0$.  After multiplication by $\sqrt t$, we have
\[
 \sqrt t\,\|\nabla v\|_{L^2(E_N)}
 \le C\left(
 \|v\|_{L^2(U_2)}
 +\|tTv\|_{L^2(U_2)}^{1/2}\|v\|_{L^2(U_2)}^{1/2}
 \right).
\]
The scalar Davies--Gaffney estimates for $e^{-tT}$ and
$tTe^{-tT}=-t\partial_te^{-tT}$, applied between $F$ and $U_2$, now imply
\[
 \sqrt t\,\|\nabla v\|_{L^2(E_N)}
 \le Ce^{-cd^2/t}\|g\|_2.
\]
Letting $N\to\infty$ removes the temporary boundedness of $E$.  When
$t\ge d^2/64$, the global $L^2$ bounds for $e^{-tT}$ and
$\sqrt t\,\nabla e^{-tT}$ yield the same estimate after increasing the
constant, since $e^{-cd^2/t}$ is then bounded below by a positive
constant depending only on $c$.

The symmetric sub-Markov semigroups are bounded on \(L^r\).  Hence the
first family is uniformly bounded on \(L^r\); so is the second, since $\sqrt t\,\nabla e^{-tT}
 =\nabla T^{-1/2}(\sqrt{tT}\,e^{-tT})$
and the model Riesz transform is bounded on \(L^r\).  Interpolation between
the \(L^2\) off-diagonal estimates and these \(L^r\) bounds yields, for
both families, an \(L^p\) estimate
\(C e^{-c_p d^2/t}\).

Finally, use the subordination formula for \(e^{-s\sqrt T}\).
After inserting the preceding exponential factor, the scalar integrals
corresponding to the zeroth-order and gradient terms are bounded by
\[
 C\!\int_0^\infty s\,t^{-3/2}e^{-c(s^2+d^2)/t}\,dt,\quad
 C\!\int_0^\infty s\,t^{-2}e^{-c(s^2+d^2)/t}\,dt .
\]
For fixed \(d>0\) both are \(O_d(s)\) when \(0<s\le1\), which proves
\eqref{eq:separated-short-time-conclusion}.
\end{proof}

\begin{lemma}
\label{lem:short-time-homogeneous-error}
Let \(B\in\{D,N\}\) and suppose \(2<p<P_B\).  Assume also that
\(p<n\) when \(B=D\).  For every
\(0<s\leq1\), the functional \(G_{B,s}u\) defined in
\eqref{eq:pointwise-balanced-error} belongs to
\(\dot W^{-1,p}_B(\Omega)\), and
\[
 \|G_{B,s}u\|_{\dot W^{-1,p}_B}
 +\|\nabla g_{B,s}u\|_p
 \leq C_p\|u\|_p.
\]
Consequently, for every \(0<\tau\leq1\),
\[
 \int_0^\tau
 \left(\|G_{B,s}u\|_{\dot W^{-1,p}_B}
       +\|\nabla g_{B,s}u\|_p\right)\,\dd s
 \leq C_p\tau\|u\|_p.
\]
These estimates hold for every \(u\in L^p(\Omega)\).
\end{lemma}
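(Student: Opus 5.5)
The plan is to estimate $\langle G_{B,s}u,\psi\rangle=\langle F_{B,s}u,w_s\rangle$, with $w_s=(I-e^{-s\sqrt{L_B}})\psi$, by using Lemma~\ref{lem:homogeneous-Poisson-tests}(iii). For this it is enough to write each term of $F_{B,s}u$ as $f+\diver H$ with $f,H$ supported in the fixed transition set $K$ and
\[
 \|f\|_{L^r(K)}+\|H\|_{L^r(K)}\le C\|u\|_p
\]
for some $r\ge p$ (we take $r=p$), uniformly in $0<s\le1$. Once $\|G_{B,s}u\|_{\dot W^{-1,p}_B}\le C\|u\|_p$ is in hand, Theorem~\ref{thm:dirichlet-exterior-inverse} (resp.\ Theorem~\ref{thm:neumann-exterior-inverse}) gives $g_{B,s}u=L_B^{-1}G_{B,s}u$ with $\|\nabla g_{B,s}u\|_p\le C_p\|u\|_p$. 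Integrating over $(0,\tau)$ then gives the factor $\tau$. We first work with $u\in L^2\cap L^p$ and pass to general $u\in L^p$ by density, since the estimates are linear and uniform.

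We control the terms of $F_{B,s}u$ one at a time, using \eqref{eq:form-commutator}. Each commutator $[L_B,\phi]v$ equals $-\diver(vA\nabla\phi)-A\nabla v\cdot\nabla\phi$, so it suffices to bound $v$ in $W^{1,p}(\supp\nabla\phi)$.

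For the core pieces, put $v=U_{c,s}=e^{-s\sqrt{H_{c,B}}}(\rho_cu)$. The supports of $\nabla\phi_c$ and $\rho_c$ are separated, so Lemma~\ref{lem:automatic-ST} with $T=H_{c,B}$ gives $\|U_{c,s}\|_{W^{1,p}(\supp\nabla\phi_c)}\le Cs\|u\|_p$. The zeroth-order term $\phi_cU_{c,s}$ is not separated, but it is a scalar, and contractivity gives $\|U_{c,s}\|_p\le\|u\|_p$. It therefore lies in $L^p(K)$ with the required bound.

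For the end pieces, put $v=U_{e,s}=e^{-s\sqrt L}(\rho_eu)$. The supports of $\nabla\phi_e$ and $\rho_e$ are separated, so Lemma~\ref{lem:automatic-ST} with $T=L$ gives the bound $Cs\|u\|_p$ on $\cA_1$. In the Dirichlet case this completes the estimate; the hypothesis $p<n$ is only needed because Theorem~\ref{thm:dirichlet-exterior-inverse} requires it.

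In the Neumann case two extra pieces appear. The first is $-[L_N,\phi_e]m_s(u)=\diver(m_sA\nabla\phi_e)$. It has $H=m_s(u)A\nabla\phi_e$, and \eqref{eq:m-small} gives $|m_s(u)|\le Cs\|u\|_p$. The second is $(\phi_e-1)m_s''(u)$, a scalar of size $Cs\|u\|_p$ by \eqref{eq:m-small}. The function $\phi_e-1$ is compactly supported, so this piece lies in $L^p(K)$.

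The main subtlety is that $F_{N,s}u$ need not annihilate constants. This is harmless because $w_s$ does not depend on the representative of $\psi$, which is exactly why Lemma~\ref{lem:homogeneous-Poisson-tests}(ii),(iii) are formulated with the quotient norm $\|\nabla\psi\|_{p'}$.

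We must also check that the uniform-in-$s$ bound needs no cancellation near $s=0$. This holds because Lemma~\ref{lem:homogeneous-Poisson-tests} controls $w_s$ in $W^{1,p'}(K)$ uniformly in $s$, rather than through the factor $s$. So every term contributes $C\|u\|_p\|\nabla\psi\|_{p'}$, and taking the supremum over $\psi$ finishes the proof.
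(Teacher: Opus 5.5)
Your proposal is correct and follows the paper's proof essentially step for step. The commutator terms are $O(s)$ by Lemma~\ref{lem:automatic-ST}, the core zeroth-order term is bounded by Poisson contractivity, and the Neumann centering pieces are controlled by \eqref{eq:m-small}. The resulting uniform bound on $F_{B,s}u$ passes to $G_{B,s}u$ through Lemma~\ref{lem:homogeneous-Poisson-tests}(iii), and the exterior inverse theorem then bounds $\nabla g_{B,s}u$. The only difference is cosmetic: you use the coefficient form $f+\diver H$ of part~(iii) rather than its $W_B^{-1,r}$-norm form.
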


\begin{proof}
All terms in \(F_{B,s}u\) are supported in one fixed bounded transition
set \(K\).  For a cutoff \(\phi\), the form product rule yields
\[
 \|[L_B,\phi]v\|_{W_B^{-1,p}}
 \leq C_\phi\bigl(\|v\|_{L^p(K)}
                  +\|\nabla v\|_{L^p(K)}\bigr).
\]
The support of \(\nabla\phi_c\) is separated from \(\supp\rho_c\), and the support of \(\nabla\phi_e\) is separated from \(\supp\rho_e\).
Lemma~\ref{lem:automatic-ST} and Poisson contractivity therefore imply,
for \(0<s\leq1\),
\begin{align*}
 \|[L_B,\phi_c]e^{-s\sqrt{H_{c,B}}}(\rho_cu)\|_{W_B^{-1,p}}
 &\lesssim s\|u\|_p,\\
 \|[L_B,\phi_e]e^{-s\sqrt L}(\rho_eu)\|_{W_B^{-1,p}}
 &\lesssim s\|u\|_p,\\
 \|\phi_ce^{-s\sqrt{H_{c,B}}}(\rho_cu)\|_{W_B^{-1,p}}
 &\lesssim\|u\|_p.
\end{align*}
For Neumann data, writing \(v_s=e^{-s\sqrt L}(\rho_eu)\),
\[
 [L_N,\phi_e](v_s-m_s(u))
 =[L_N,\phi_e]v_s-m_s(u)[L_N,\phi_e]1,
\]
and \eqref{eq:m-small} yields $|m_s(u)|+|m_s''(u)|\leq Cs\|u\|_p$. The formulas for \(F_{D,s}\) and \(F_{N,s}\) now show that
\[
 \|F_{B,s}u\|_{W_B^{-1,p}}\leq C\|u\|_p,
 \qquad 0<s\leq1.
\]
Apply \eqref{eq:compact-form-test} with \(r=p\), take the supremum over
\(\|\nabla\psi\|_{p'}\leq1\), and obtain
\[
 \|G_{B,s}u\|_{\dot W^{-1,p}_B}\leq C\|u\|_p.
\]
The appropriate exterior inverse theorem and
\eqref{eq:pointwise-correction} then imply
\[
 \|\nabla g_{B,s}u\|_p
 \leq C\|G_{B,s}u\|_{\dot W^{-1,p}_B}
 \leq C\|u\|_p.
\]
Integration over \((0,\tau)\) proves the last assertion.
\end{proof}

\begin{lemma}
\label{lem:large-time-homogeneous-error}
Let \(B\in\{D,N\}\) and suppose \(2<p<P_B\).  Assume also that
\(p<n\) when \(B=D\).  Choose \(r\) such that \(p<r<P_B\), and put
\[
 \alpha=n\left(\frac1p-\frac1r\right)>0.
\]
There are \(c>0\) and \(C_p>0\) such that, for every \(s\geq1\),
\[
 \|G_{B,s}u\|_{\dot W^{-1,p}_B}
 +\|\nabla g_{B,s}u\|_p
 \leq C_pa_B(s)\|u\|_p,
\]
where
\[
 a_N(s)=s^{-1-\alpha}+s^{-2-\alpha}+e^{-cs},
 \qquad
 a_D(s)=s^{-1-\alpha}+s^{-n/p}+e^{-cs}.
\]
In particular, these majorants are integrable on \((1,\infty)\).  More
precisely, for every \(R\geq1\), their tails are bounded by
\[
 \omega_N(R)=R^{-\alpha}+R^{-1-\alpha}+e^{-cR},
 \qquad
 \omega_D(R)=R^{-\alpha}+R^{1-n/p}+e^{-cR}.
\]
This holds for every \(u\in L^p(\Omega)\).
\end{lemma}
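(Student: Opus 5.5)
The plan mirrors the proof of Lemma~\ref{lem:short-time-homogeneous-error}. First we bound $F_{B,s}u$ term by term in $W_B^{-1,p}$, now at large $s$. Then we test against $w_s=(I-e^{-s\sqrt{L_B}})\psi$ using Lemma~\ref{lem:homogeneous-Poisson-tests}(iii), and finally apply the exterior inverse (Theorem~\ref{thm:dirichlet-exterior-inverse} or Theorem~\ref{thm:neumann-exterior-inverse}) to pass from $G_{B,s}u$ to $\nabla g_{B,s}u$. Since all errors are supported in a fixed transition set $K$, the commutator bound
\[
 \|[L_B,\phi]v\|_{W_B^{-1,p}}\lesssim\|v\|_{W^{1,p}(K)}
\]
reduces everything to local $W^{1,p}$ decay of the model Poisson orbits for $s\ge1$. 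The test side contributes only a uniform constant by \eqref{eq:homogeneous-Poisson-local}.

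\emph{Core terms.} The shift gives a spectral gap. The core Riesz bound at exponent $p<p_{c,B}$, together with analyticity, gives
\[
 \|e^{-s\sqrt{H_{c,B}}}(\rho_cu)\|_{W^{1,p}(\Omega_c)}\lesssim e^{-cs}\|u\|_p
\]
for $s\ge1$. This accounts for the $e^{-cs}$ term, both for the commutator and for the zeroth-order piece $\phi_c e^{-s\sqrt{H_{c,B}}}(\rho_cu)$.

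\emph{End term.} Here we use the $L^p$ analogue of Proposition~\ref{prop:local-Poisson-estimates}, upgraded through the auxiliary exponent $r\in(p,P_B)$. The Poisson kernel bound applied on the fixed annulus $\cA_2$, with $u\in L^p$, gives
\[
 \|e^{-s\sqrt L}(\rho_eu)\|_{L^\infty(\cA_2)}\lesssim s^{-n/p}\|u\|_p .
\]
For the gradient, write $\nabla e^{-s\sqrt L}=\nabla L^{-1/2}\,(\sqrt L e^{-s\sqrt L})$. Use the Riesz bound on $L^r$, then an $L^p\to L^r$ smoothing estimate for $\sqrt Le^{-s\sqrt L}$ of size $s^{-1-n(1/p-1/r)}=s^{-1-\alpha}$, and restrict to the bounded annulus by H\"older. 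This produces the $s^{-1-\alpha}$ term in both $a_N$ and $a_D$.

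In the Dirichlet case the uncentered $L^p$ part $s^{-n/p}$ survives; it is integrable precisely because $p<n$. This explains the $s^{-n/p}$ summand of $a_D$ and the tail $R^{1-n/p}$.

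In the Neumann case we subtract $m_s(u)$. By the weighted Poincar\'e inequality \eqref{eq:weighted-Poincare},
\[
 \|e^{-s\sqrt L}(\rho_eu)-m_s(u)\|_{W^{1,p}(\cA_1)}\lesssim\|\nabla e^{-s\sqrt L}(\rho_eu)\|_{L^p(\cA_2)}\lesssim s^{-1-\alpha}\|u\|_p .
\]
This removes the $s^{-n/p}$ term, which would not be integrable for $p\ge n$, as happens when $n=2$. The term $(\phi_e-1)m_s''(u)$ is a compactly supported $L^p$ function. Differentiating, $|m_s''(u)|=|\langle\rho_eu,Le^{-s\sqrt L}\eta\rangle|$, and by the $L^p\to L^r$ analyticity/smoothing of $Le^{-s\sqrt L}$ paired with $\eta$ we aim for the bound $s^{-2-\alpha}$. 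If needed, we can instead use the direct kernel estimate $s^{-2-n/p}$, which is stronger since $n/p\ge\alpha$.

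Summing the pieces gives $\|F_{B,s}u\|_{W_B^{-1,p}}\lesssim a_B(s)\|u\|_p$. Then \eqref{eq:compact-form-test} gives the bound for $G_{B,s}u$, and the isomorphism theorems give the bound for $\nabla g_{B,s}u$. The tail bounds $\omega_B(R)$ follow by integrating each power on $(R,\infty)$.

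\emph{Main obstacle.} The hardest step is the gradient decay $s^{-1-\alpha}$ for the end orbit on the annulus. The Riesz transform is bounded only up to $p_L$, so we cannot simply use Caccioppoli in $L^p$. The plan is to factor through the Riesz transform at the intermediate exponent $r$ and to obtain the $s^{-1-\alpha}$ smoothing from Gaussian bounds for $\partial_s e^{-s\sqrt L}$, as in Lemma~\ref{lem:connected-poisson-smoothing} with $\nu=n$. If H\"older on the bounded annulus loses nothing essential, the stated exponent follows.
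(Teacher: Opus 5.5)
Your proposal is correct and follows essentially the paper's proof. The ingredients match: you center the end orbit and apply the weighted Poincar\'e inequality, you get the \(s^{-1-\alpha}\) gradient decay from the Riesz bound at the auxiliary exponent \(r\) combined with \(L^p\to L^r\) Poisson smoothing, you bound \(m_s''\) by \(s^{-2-\alpha}\), you get \(e^{-cs}\) from the shifted core, you keep the uncentered \(s^{-n/p}\) term for Dirichlet data, and you conclude with \eqref{eq:compact-form-test} and the exterior inverse. The only differences are cosmetic: you measure the forms in \(W_B^{-1,p}\) after H\"older on the bounded annulus rather than in \(W_B^{-1,r}\), and you bound the uncentered Dirichlet commutator directly rather than splitting off \(m_s(u)[L_D,\phi_e]1\).
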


\begin{proof}
With the \(r\) chosen in the statement, put $w_s=e^{-s\sqrt L}(\rho_eu)-m_s(u)$. Gaussian smoothing and the model Riesz estimate at \(r\) imply
\[
 \|e^{-s\sqrt L}(\rho_eu)\|_r
 \leq Cs^{-\alpha}\|u\|_p,\qquad
 \|\nabla e^{-s\sqrt L}(\rho_eu)\|_r
 \leq Cs^{-1-\alpha}\|u\|_p.
\]
By the definition of \(m_s(u)\), the function \(w_s\) has zero
\(\eta\)-average on \(\mathcal A_2\).  The weighted Poincar\'e inequality
\eqref{eq:weighted-Poincare} therefore yields
\[
 \|w_s\|_{L^r(\mathcal A_2)}
 \lesssim\|\nabla w_s\|_{L^r(\mathcal A_2)}
 =\|\nabla e^{-s\sqrt L}(\rho_eu)\|_{L^r(\mathcal A_2)}
 \lesssim s^{-1-\alpha}\|u\|_p.
\]
Since \(\supp\nabla\phi_e\subset\mathcal A_2\), the form commutator formula
implies
\[
 \|[L_B,\phi_e]w_s\|_{W_B^{-1,r}}
 \leq Cs^{-1-\alpha}\|u\|_p.
\]
For the second derivative, use
\[
 Le^{-s\sqrt L}
 =\bigl(Le^{-(s/2)\sqrt L}\bigr)e^{-(s/2)\sqrt L}.
\]
Analyticity on \(L^r\) controls the first factor by \(Cs^{-2}\), while
Poisson smoothing from \(L^p\) to \(L^r\) controls the second by
\(Cs^{-\alpha}\).  Hence
\[
 \|Le^{-s\sqrt L}(\rho_eu)\|_r
 \leq Cs^{-2-\alpha}\|u\|_p.
\]
Since \(m_s''(u)\) is the \(\eta\)-average of
\(Le^{-s\sqrt L}(\rho_eu)\), H\"older's inequality gives
\[
 |m_s''(u)|\leq Cs^{-2-\alpha}\|u\|_p.
\]
Gaussian smoothing, the shifted-core spectral gap, and the core Riesz
bound at \(r\) yield
\[
 \|[L_B,\phi_c]e^{-s\sqrt{H_{c,B}}}(\rho_cu)\|_{W_B^{-1,r}}
 +\|\phi_ce^{-s\sqrt{H_{c,B}}}(\rho_cu)\|_{W_B^{-1,r}}
 \leq Ce^{-cs}\|u\|_p,
 \qquad s\geq1.
\]
Therefore
\[
 |\langle F_{N,s}u,(I-e^{-s\sqrt{L_N}})\psi\rangle|
 \leq C\bigl(s^{-1-\alpha}+s^{-2-\alpha}+e^{-cs}\bigr)
       \|u\|_p\|\nabla\psi\|_{p'}.
\]
Taking the supremum over \(\|\nabla\psi\|_{p'}\leq1\) bounds
\(G_{N,s}u\) in \(\dot W^{-1,p}_N\).  The Neumann exterior inverse then
bounds \(\|\nabla g_{N,s}u\|_p\) by the same majorant.  This proves the
Neumann estimate in every dimension \(n\geq2\).

For Dirichlet data, decompose the end commutator as
\[
 [L_D,\phi_e]e^{-s\sqrt L}(\rho_eu)
 =[L_D,\phi_e]w_s+m_s(u)T,
 \qquad T=[L_D,\phi_e]1=-\diver(A\nabla\phi_e).
\]
The first term has the estimate above, while the whole-space
Poisson-kernel bound and the definition of $m_s(u)$ give
\[
 |m_s(u)|
 \leq\|\eta\|_1
       \|e^{-s\sqrt L}(\rho_eu)\|_\infty
 \leq Cs^{-n/p}\|\rho_eu\|_p
 \leq Cs^{-n/p}\|u\|_p.
\]

The fixed distribution \(T\) is compactly supported in the interior, so
Lemma~\ref{lem:homogeneous-Poisson-tests} yields
\[
 |\langle F_{D,s}u,(I-e^{-s\sqrt{L_D}})\psi\rangle|
 \leq C\bigl(s^{-1-\alpha}+s^{-n/p}+e^{-cs}\bigr)
       \|u\|_p\|\nabla\psi\|_{p'}.
\]
Taking the supremum over \(\|\nabla\psi\|_{p'}\leq1\) and then applying
the Dirichlet exterior inverse proves the stated bounds for
\(G_{D,s}u\) and \(g_{D,s}u\).  Because \(p<n\), the power
\(s^{-n/p}\) is integrable on \((1,\infty)\).  Integrating the two
majorants over \((R,\infty)\) yields \(\omega_N(R)\) and
\(\omega_D(R)\).
\end{proof}

% \begin{proposition}
% \label{prop:integrated-homogeneous-error}
% For every \(u\in L^p(\Omega)\), the maps
% \[
%  s\longmapsto G_{B,s}u,
%  \qquad s\longmapsto g_{B,s}u
% \]
% are strongly measurable in the corresponding homogeneous negative and
% solution spaces.  The Bochner integrals
% \begin{equation}\label{eq:integrated-form-error}
%  G_Bu:=\int_0^\infty G_{B,s}u\,\dd s
% \end{equation}
% and
% \begin{equation}\label{eq:homogeneous-corrections}
%  g_Bu:=\int_0^\infty g_{B,s}u\,\dd s
%       =L_B^{-1}G_Bu
% \end{equation}
% converge.  Here the second integral takes values in
% \(\dot W^{1,p}_0(\Omega)\) for \(B=D\) and in
% \(\dot W^{1,p}(\Omega)/\mathbb C\) for \(B=N\).  Moreover,
% \begin{equation}\label{eq:homogeneous-correction-bound}
%  \|G_Bu\|_{\dot W^{-1,p}_B}+\|\nabla g_Bu\|_p
%  \leq C_p\|u\|_p.
% \end{equation}
% \end{proposition}

\begin{proposition}
\label{prop:integrated-homogeneous-error}
For every \(u\in L^p(\Omega)\), the Bochner integrals
\begin{equation}\label{eq:integrated-form-error}
 G_Bu:=\int_0^\infty G_{B,s}u\,\dd s
\end{equation}
and
\begin{equation}\label{eq:homogeneous-corrections}
 g_Bu:=\int_0^\infty g_{B,s}u\,\dd s
      =L_B^{-1}G_Bu
\end{equation}
converge.  Here the second integral takes values in
\(\dot W^{1,p}_0(\Omega)\) for \(B=D\) and in
\(\dot W^{1,p}(\Omega)/\mathbb C\) for \(B=N\).  Moreover,
\begin{equation}\label{eq:homogeneous-correction-bound}
 \|G_Bu\|_{\dot W^{-1,p}_B}+\|\nabla g_Bu\|_p
 \leq C_p\|u\|_p.
\end{equation}
\end{proposition}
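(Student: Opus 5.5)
The plan is to combine the pointwise bounds of Lemmas~\ref{lem:short-time-homogeneous-error} and~\ref{lem:large-time-homogeneous-error} with strong measurability, and then to commute the bounded operator $L_B^{-1}$ of Theorems~\ref{thm:dirichlet-exterior-inverse} and~\ref{thm:neumann-exterior-inverse} with the Bochner integral. The first step is measurability. I will show that $s\mapsto G_{B,s}u$ is continuous on $(0,\infty)$ with values in $\dot W^{-1,p}_B(\Omega)$. The pairing $\langle G_{B,s}u,\psi\rangle=\langle F_{B,s}u,(I-e^{-s\sqrt{L_B}})\psi\rangle$ has two $s$-dependent ingredients, and each is handled separately.
\begin{enumerate}[label=\textup{(\alph*)},leftmargin=*]
\item The form error $F_{B,s}u$ is supported in the fixed set $K$. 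It is continuous into $W_B^{-1,p}$ on compact subsets of $(0,\infty)$. This follows because the model Poisson orbits are analytic in $s$ with values in $W^{1,p}_{\mathrm{loc}}$ by the model Riesz bounds at $p<P_B$. The scalars $m_s(u)$ and $m_s''(u)$ are smooth in $s$.
\item The test factor $(I-e^{-s\sqrt{L_B}})\psi$ varies continuously in $W^{1,p'}(K\cap\Omega)$, uniformly in $\|\nabla\psi\|_{p'}\le1$. I will obtain this from Lemma~\ref{lem:homogeneous-Poisson-tests} together with analyticity of the Poisson semigroup. Writing $e^{-s\sqrt{L_B}}-e^{-s_0\sqrt{L_B}}=\int_{s_0}^s\partial_te^{-t\sqrt{L_B}}\,\dd t$ reduces the increment to $L_B^{1/2}e^{-t\sqrt{L_B}}\psi$. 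That quantity is controlled via the reverse inequality and analyticity, with a bound of order $|s-s_0|/s_0$ on $W^{1,p'}$ locally.
\end{enumerate}
Compact support then lets \eqref{eq:compact-form-test} convert (a) and (b) into norm continuity of $G_{B,s}u$. If the uniformity in (b) proves awkward, a weaker route suffices. Weak measurability follows from continuity of each scalar pairing, and $\dot W^{-1,p}_B$ is separable (a dual of a separable reflexive space). Pettis' theorem then gives strong measurability.

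Next, $g_{B,s}u=L_B^{-1}G_{B,s}u$ is the composition of a strongly measurable function with a bounded linear operator, so it is strongly measurable in the solution space. For integrability, Lemma~\ref{lem:short-time-homogeneous-error} bounds $\|G_{B,s}u\|+\|\nabla g_{B,s}u\|_p$ by $C\|u\|_p$ on $(0,1]$. Lemma~\ref{lem:large-time-homogeneous-error} bounds the same quantity by $Ca_B(s)\|u\|_p$ on $[1,\infty)$. The majorant $a_B$ is integrable: $\alpha>0$ handles the first terms, and in the Dirichlet case $p<n$ gives $n/p>1$. Hence both Bochner integrals converge absolutely, and summing the two pieces yields \eqref{eq:homogeneous-correction-bound}.

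Finally, bounded linear operators commute with Bochner integrals, so $L_B^{-1}\int_0^\infty G_{B,s}u\,\dd s=\int_0^\infty L_B^{-1}G_{B,s}u\,\dd s$. This gives \eqref{eq:homogeneous-corrections}. In the Neumann case the integral is taken in the quotient space, which is Banach, so constants cause no issue. The main obstacle is the measurability step, specifically the uniform-in-$\psi$ continuity of the test factor near a fixed $s_0>0$. I expect the Pettis fallback to bypass it in any case.
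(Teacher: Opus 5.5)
Your proposal is correct and follows essentially the same route as the paper. Both arguments establish measurability of $s\mapsto G_{B,s}u$, take the integrable majorants from Lemmas~\ref{lem:short-time-homogeneous-error} and~\ref{lem:large-time-homogeneous-error}, and then commute the bounded inverse $L_B^{-1}$ with the Bochner integral. The only difference is how you get continuity. You prove it directly for general $u\in L^p$: your increment bound of order $|s-s_0|/s_0$, obtained from the reverse inequality, the low-exponent Riesz bound and analyticity, does hold, and you keep the Pettis argument as a fallback. The paper instead proves norm continuity for smooth $u$ and passes to general $u$ through the uniform-in-$s$ operator bounds, realising $s\mapsto G_{B,s}u$ as a locally uniform limit of continuous maps.
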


\begin{proof}
For smooth \(u\), semigroup analyticity shows that both maps $s\longmapsto G_{B,s}u$ and $s\longmapsto g_{B,s}u$ are
norm-continuous on compact subintervals of \((0,\infty)\).  The general
case follows from the uniform operator estimates in
Lemmas~\ref{lem:short-time-homogeneous-error} and
\ref{lem:large-time-homogeneous-error}.  These estimates also provide
integrable norm majorants, so the Bochner integrals converge and satisfy
\eqref{eq:homogeneous-correction-bound}.  Since \(L_B^{-1}\) is bounded,
it commutes with these integrals. This proves \eqref{eq:homogeneous-corrections} and
\eqref{eq:homogeneous-correction-bound}.
\end{proof}

Finite truncations will be used in the form calculation that follows.
Take \(u\) in the relevant smooth test class and
\(0<\epsilon<R<\infty\), and define
\begin{equation}\label{eq:truncated-form-error}
 G_{B,\epsilon,R}u=\int_\epsilon^R G_{B,s}u\,\dd s,
 \qquad
 g_{B,\epsilon,R}u=\int_\epsilon^R g_{B,s}u\,\dd s
                  =L_B^{-1}G_{B,\epsilon,R}u.
\end{equation}
If
\(0<\epsilon\leq1\leq R\),
\[
 \|G_{B,\epsilon,R}u-G_Bu\|_{\dot W^{-1,p}_B}
 +\|\nabla(g_{B,\epsilon,R}u-g_Bu)\|_p
 \leq C_p\bigl(\epsilon+\omega_B(R)\bigr)\|u\|_p.
\]
Equivalently,
\begin{equation}\label{eq:truncated-correction-limit}
 g_{B,\epsilon,R}u\longrightarrow g_Bu
 \quad\text{in the homogeneous solution space}
\end{equation}
as \(\epsilon\downarrow0\) and \(R\uparrow\infty\).

On each fixed interval \([\epsilon,R]\), semigroup analyticity justifies
the form calculation in the next section.

\section{Proof of Theorem~\ref{thm:intro-forward}}
\label{sec:proof-forward}

For \(u\in L^2(\Omega)\), \(e^{-t\sqrt{L_B}}u\) is the unique spectral solution in the decaying energy class of
\[
 (-\partial_t^2+L_B) e^{-t\sqrt{L_B}} u=0,
 \qquad \lim_{t\to0}e^{-t\sqrt{L_B}} u=u,
\]
with the physical Dirichlet or Neumann boundary condition encoded by
\(L_B\).  Since \(U_t\) has the same initial trace and
\((-\partial_t^2+L_B)U_t=F_{B,t}u\), the half-line Green formula suggests
\[
 e^{-t\sqrt{L_B}} u=U_t-\int_0^\infty
 \frac{e^{-|t-s|\sqrt{L_B}}-e^{-(t+s)\sqrt{L_B}}}
      {2\sqrt{L_B}}F_{B,s}u\,\dd s.
\]
After integration in \(t\), the scalar identity at the beginning of
Section~\ref{sec:forward-caccioppoli} becomes, formally,
\[
 L_B^{-1/2}u
 =\int_0^\infty U_t\,\dd t
  -\int_0^\infty L_B^{-1}(I-e^{-s\sqrt{L_B}})F_{B,s}u\,\dd s.
\]
Thus the second integral is precisely the correction constructed in
Section~\ref{sec:forward-caccioppoli}: at each source height \(s\), we
first form \(G_{B,s}u\), then apply \(L_B^{-1}\), and integrate last.
The displayed Green formula is only a motivation because
\(F_{B,s}u\) is form-valued.  The following finite identity is its
rigorous replacement.

\begin{lemma}
\label{lem:finite-cylinder-identity}
Let \(u\) belong to the relevant smooth test class, and let
\(0<\epsilon<R<\infty\).  For \(B=D\), let
\(\psi\in\dot W^{1,p'}_0(\Omega)\); for \(B=N\), let
\(\psi\in\dot W^{1,p'}(\Omega)/\mathbb C\).  Then
\begin{align}
 \left\langle L_B\left(\int_\epsilon^R U_s\,\dd s
              -\int_\epsilon^R g_{B,s}u\,\dd s\right),\psi\right\rangle
 =\left[
   \langle\partial_sU_s,(I-e^{-s\sqrt{L_B}})\psi\rangle
   -\langle U_s,L_B^{1/2}e^{-s\sqrt{L_B}}\psi\rangle
   \right]_{s=\epsilon}^{s=R}.
 \label{eq:finite-cylinder-identity}
\end{align}
\end{lemma}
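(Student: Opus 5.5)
We pair both sides with a fixed test function \(\psi\) and compute each contribution separately, working first with \(\psi\) in the smooth homogeneous test class and removing that restriction by density at the end, using Lemma~\ref{lem:homogeneous-Poisson-tests}. For the correction term, linearity of the Bochner integral and the definition \eqref{eq:pointwise-correction} give
\[
 \Bigl\langle L_B\int_\epsilon^R g_{B,s}u\,\dd s,\psi\Bigr\rangle
 =\int_\epsilon^R\langle G_{B,s}u,\psi\rangle\,\dd s
 =\int_\epsilon^R\langle F_{B,s}u,(I-e^{-s\sqrt{L_B}})\psi\rangle\,\dd s .
\]
Here we use that \(L_B\) is a bounded map from the homogeneous solution space to its dual. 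By \eqref{eq:truncated-form-error}, this map commutes with the finite integral. For the principal term, the map \(s\mapsto U_s\) is continuous into the local form domain on \([\epsilon,R]\) by semigroup analyticity. Hence
\(\langle L_B\int_\epsilon^RU_s\,\dd s,\psi\rangle=\int_\epsilon^R\mathfrak q_B(U_s,\psi)\,\dd s\).
In the Neumann case, constants are killed by \(\nabla\), so the pairing is well defined on classes.

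The core of the argument is a Wronskian-type computation in \(s\). Set \(w_s=(I-e^{-s\sqrt{L_B}})\psi\) and
\[
 \Phi(s)=\langle\partial_sU_s,w_s\rangle-\langle U_s,L_B^{1/2}e^{-s\sqrt{L_B}}\psi\rangle .
\]
Differentiating, and using \(\partial_sw_s=L_B^{1/2}e^{-s\sqrt{L_B}}\psi\) and \(\partial_s(L_B^{1/2}e^{-s\sqrt{L_B}}\psi)=-L_Be^{-s\sqrt{L_B}}\psi\), we get
\[
 \Phi'(s)=\langle\partial_s^2U_s,w_s\rangle+\langle\partial_sU_s,L_B^{1/2}e^{-s\sqrt{L_B}}\psi\rangle
 -\langle\partial_sU_s,L_B^{1/2}e^{-s\sqrt{L_B}}\psi\rangle+\langle U_s,L_Be^{-s\sqrt{L_B}}\psi\rangle .
\]
The two middle terms cancel. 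Now insert \(\partial_s^2U_s=L_BU_s-F_{B,s}u\) from \eqref{eq:parametrix-error-casewise}, read in the form sense. This gives
\[
 \Phi'(s)=\mathfrak q_B(U_s,w_s)-\langle F_{B,s}u,w_s\rangle+\mathfrak q_B(U_s,e^{-s\sqrt{L_B}}\psi)
 =\mathfrak q_B(U_s,\psi)-\langle F_{B,s}u,w_s\rangle .
\]
Integrating over \([\epsilon,R]\) and subtracting the two pairings computed above yields \eqref{eq:finite-cylinder-identity}. This is the same mechanism as Lemma~\ref{lem:Wronskian}, applied with one argument being the parametrix.

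The main obstacle is justification. Three points need care.

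First, \(U_s\) is not in \(L^2\)-type global spaces. In the Neumann case it contains the constant-like piece \(-(\phi_e-1)m_s(u)\), and \(\psi\) is only a homogeneous class, so each pairing must be checked to be absolutely convergent. We handle this by splitting \(U_s\) into its compactly supported core and correction pieces and its end piece \(\phi_ee^{-s\sqrt L}(\rho_eu)\).
\begin{itemize}
\item The compactly supported pieces are controlled by the local bound \eqref{eq:homogeneous-Poisson-local}, with the normalized representative of \(\psi\).
\item The end piece lies in \(L^p\cap L^2\) with gradients in \(L^p\) for fixed \(s>0\). It is paired against \(w_s\in L^{p'}\), using \eqref{eq:homogeneous-Poisson-global}, and against \(L_B^{1/2}e^{-s\sqrt{L_B}}\psi\in L^{p'}\), using the reverse inequality from Theorem~\ref{thm:intro-neumann-reverse} and Remark~\ref{rem:Dirichlet-reverse}.
\item The constant correction pairs to zero against \(L_B^{1/2}e^{-s\sqrt{L_B}}\psi\) only after regrouping. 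I would therefore keep it inside \(\partial_sU_s\) paired with \(w_s\), where \(I-e^{-s\sqrt{L_N}}\) annihilates constants.
\end{itemize}

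Second, the form-sense product rule must be justified. On \([\epsilon,R]\), analyticity makes every \(s\)-derivative a continuous curve in the relevant spaces. So I would verify the derivative identity for \(\psi\) smooth and compactly supported, where all terms lie in \(L^2\) and the form domain, and then integrate.

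Third, we pass to general \(\psi\) by density. Both sides of \eqref{eq:finite-cylinder-identity} are continuous in \(\|\nabla\psi\|_{p'}\): the right side by Lemma~\ref{lem:homogeneous-Poisson-tests}, and the left side by boundedness of \(L_B\) between the homogeneous spaces.
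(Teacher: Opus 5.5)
Your proposal is correct and follows the paper's proof. The steps coincide: you differentiate the bracket on $[\epsilon,R]$ for $\psi$ in the smooth form core, cancel the two middle terms, substitute $\partial_s^2U_s=L_BU_s-F_{B,s}u$, identify $\langle F_{B,s}u,(I-e^{-s\sqrt{L_B}})\psi\rangle$ with $\langle L_Bg_{B,s}u,\psi\rangle$ via \eqref{eq:pointwise-balanced-error}--\eqref{eq:pointwise-correction}, and extend to the homogeneous test space using Lemma~\ref{lem:homogeneous-Poisson-tests}.

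One small correction to your justification remarks. For smooth $u$ the parametrix $U_s$ does lie in the $L^2$ form domain, since the piece $(\phi_e-1)m_s(u)$ is compactly supported ($\phi_e=1$ at infinity). So the only non-$L^2$ object is $\psi$, and no regrouping of the centering term is needed.
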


\begin{proof}
First let \(\psi\) belong to the common smooth form core.  By
\eqref{eq:parametrix-error-casewise},
\(F_{B,s}u=-\partial_s^2U_s+L_BU_s\).  Since
\(\partial_se^{-s\sqrt{L_B}}
=-L_B^{1/2}e^{-s\sqrt{L_B}}\), direct differentiation yields
\begin{align*}
 \frac{\dd}{\dd s}\Bigl\{
  \langle\partial_sU_s,(I-e^{-s\sqrt{L_B}})\psi\rangle
  -\langle U_s,L_B^{1/2}e^{-s\sqrt{L_B}}\psi\rangle
 \Bigr\}
 ={}&\langle\partial_s^2U_s,(I-e^{-s\sqrt{L_B}})\psi\rangle+\langle L_BU_s,e^{-s\sqrt{L_B}}\psi\rangle\\
 ={}&\langle L_BU_s,\psi\rangle
    -\langle F_{B,s}u,(I-e^{-s\sqrt{L_B}})\psi\rangle.
\end{align*}
Here
\(\langle U_s,L_Be^{-s\sqrt{L_B}}\psi\rangle
=\langle L_BU_s,e^{-s\sqrt{L_B}}\psi\rangle\) is the symmetric form identity. By
\eqref{eq:pointwise-balanced-error} and
\eqref{eq:pointwise-correction}, the last line equals $\langle L_B(U_s-g_{B,s}u),\psi\rangle$. Integration over \((\epsilon,R)\) proves the identity.  Semigroup
analyticity and Lemma~\ref{lem:homogeneous-Poisson-tests} justify the
calculation and its extension to the stated test space.
\end{proof}

\begin{proof}[Proof of Theorem~\ref{thm:intro-forward}]
The standard estimate \eqref{eq:exterior-low-Riesz} proves the theorem for
\(1<p\leq2\).  Fix \(p>2\) in one of the stated ranges.  We first take
\(u\) in the corresponding smooth test class. 

\emph{Step 1: convergence of the explicit potential.}
Define
\begin{align*}
 V_Du={}&\phi_eL^{-1/2}(\rho_eu)
          +\phi_cH_{c,D}^{-1/2}(\rho_cu),\\
 V_Nu={}&\bigl[\phi_e\widetilde W_u
          +\phi_cH_{c,N}^{-1/2}(\rho_cu)\bigr],
\end{align*}
where the brackets denote the class modulo constants.  The normalization
of \(\widetilde W_u\) in Section~\ref{sec:forward-parametrix} makes the
product \(\phi_e\widetilde W_u\) unambiguous.

For Dirichlet data, functional calculus yields
\begin{align*}
 \int_\epsilon^R U_s\,\dd s
 =\phi_eL^{-1/2}
       (e^{-\epsilon\sqrt L}-e^{-R\sqrt L})(\rho_eu)+\phi_cH_{c,D}^{-1/2}
       (e^{-\epsilon\sqrt{H_{c,D}}}
        -e^{-R\sqrt{H_{c,D}}})(\rho_cu).
\end{align*}
The model Riesz bounds, strong stability, the core spectral gap, and
fractional integration on the fixed support of \(\nabla\phi_e\) imply
\[
 \int_\epsilon^R U_s\,\dd s\longrightarrow V_Du
 \quad\text{in }\dot W^{1,p}_0(\Omega).
\]
For Neumann data, integrating the centered parametrix
\eqref{eq:exterior-Devyver} yields, modulo constants,
\begin{align*}
 \int_\epsilon^R U_s\,\dd s
 =\left[{}\phi_e \left((W_R-c_R)-(W_\epsilon-c_\epsilon) \right) +\phi_cH_{c,N}^{-1/2}
       (e^{-\epsilon\sqrt{H_{c,N}}}
        -e^{-R\sqrt{H_{c,N}}})(\rho_cu)\right].
\end{align*}
Equations \eqref{eq:WR-gradient-limit} and \eqref{eq:WR-Cauchy}, together
with the core spectral gap, give
\[
 \int_\epsilon^R U_s\,\dd s\longrightarrow V_Nu
 \quad\text{in }\dot W^{1,p}(\Omega)/\mathbb C.
\]
In both cases Proposition~\ref{prop:explicit-parametrix} implies
\begin{equation}\label{eq:explicit-potential-gradient}
 \nabla V_Bu=\mathcal R_Bu,\qquad
 \|\nabla V_Bu\|_p\leq C_p\|u\|_p.
\end{equation}

\emph{Step 2: the two boundaries.}
The reverse estimate at \(p'\) shows that
\[
 \ell_u(\psi):=\langle u,L_B^{1/2}\psi\rangle
\]
defines a functional in \(\dot W^{-1,p}_B(\Omega)\) with
\(\|\ell_u\|_{\dot W^{-1,p}_B}\lesssim\|u\|_p\).
Equations \eqref{eq:homogeneous-Poisson-local} and
\eqref{eq:homogeneous-Poisson-global} bound the bracket at time \(s\) in
\eqref{eq:finite-cylinder-identity} by $C\bigl(s\|\partial_sU_s\|_p+\|U_s\|_p\bigr)\|\nabla\psi\|_{p'}$. The model-semigroup limits \eqref{eq:model-Poisson-endpoints}, the identities
\[
 m_s(u)=\langle e^{-s\sqrt L}(\rho_eu),\eta\rangle,
 \qquad
 m_s'(u)=-\langle\sqrt L\,e^{-s\sqrt L}(\rho_eu),\eta\rangle.
\]
and the Poisson-kernel bounds recorded in
Section~\ref{sec:exterior-reverse}
imply $\epsilon|m_\epsilon'(u)|\longrightarrow0$, $|m_R(u)|+R|m_R'(u)|\longrightarrow0$. Together with \(m_0(u)=0\), these limits yield, in \(L^p\), as $\epsilon \to 0$: $U_\epsilon\to u$, $\epsilon\|\partial_sU_s|_{s=\epsilon}\|_p\to0$, and, as \(R\to\infty\), $U_R\to0$, $R\|\partial_sU_s|_{s=R}\|_p\to0$. The upper bracket therefore tends to zero in
\(\dot W^{-1,p}_B(\Omega)\).  At the lower endpoint, for\(\|\nabla\psi\|_{p'}\leq1\), 
%self-adjointness and commutation of
% \(e^{-\epsilon\sqrt{L_B}}\) with \(L_B^{1/2}\) imply
\begin{align*}
 |\langle U_\epsilon-u,
       L_B^{1/2}e^{-\epsilon\sqrt{L_B}}\psi\rangle|
 +|\langle u,L_B^{1/2}(e^{-\epsilon\sqrt{L_B}}-I)\psi\rangle|\leq C\bigl(\|U_\epsilon-u\|_p
 +\|(e^{-\epsilon\sqrt{L_B}}-I)u\|_p\bigr).
\end{align*}
For the second term we used the duality identity
\[
 \langle u,L_B^{1/2}(e^{-\epsilon\sqrt{L_B}}-I)\psi\rangle
 =\langle(e^{-\epsilon\sqrt{L_B}}-I)u,L_B^{1/2}\psi\rangle.
\]
The right-hand side tends to zero by strong \(L^p\)-continuity.  Together
with \(\epsilon\|\partial_sU_s|_{s=\epsilon}\|_p\to0\), this shows that the lower
bracket in \eqref{eq:finite-cylinder-identity} converges to \(-\ell_u\)
in the \(\dot W^{-1,p}_B\)-norm.

\emph{Step 3: identification of the two solutions.}
The convergence proved in Step~1, the correction convergence
\eqref{eq:truncated-correction-limit}, the time-boundary limits in
Step~2, and Lemma~\ref{lem:finite-cylinder-identity} imply
\begin{equation}\label{eq:homogeneous-resolvent-identity}
 L_B(V_Bu-g_Bu)=\ell_u
 \qquad\text{in }\dot W^{-1,p}_B(\Omega).
\end{equation}
The datum \(\ell_u\) also belongs to \(\dot W^{-1,2}_B(\Omega)\), because
the energy square-root identity gives
\[
 |\ell_u(\psi)|
 \leq\|u\|_2\|L_B^{1/2}\psi\|_2
 \lesssim\|u\|_2\|\nabla\psi\|_2.
\]

For the present \(u\in L^p\cap L^2\), the low-exponent Riesz inequality implies $\|L_B^{-1/2}u\|_{\dot W_B^{1,2}}\lesssim \|u\|_2$. For every energy test function \(\psi\), it follows by spectral theorem 
\[
 \langle L_BL_B^{-1/2}u,\psi\rangle
 =\langle L_B^{1/2}L_B^{-1/2}u,L_B^{1/2}\psi\rangle
 =\langle u,L_B^{1/2}\psi\rangle
 =\ell_u(\psi).
\]
Thus $L_B^{-1/2}u$ is an energy solution to $L_B v = \ell_u$. The compatibility assertion in
Theorem~\ref{thm:dirichlet-exterior-inverse} or
Theorem~\ref{thm:neumann-exterior-inverse} identifies
this energy solution with the \(p\)-solution in
\eqref{eq:homogeneous-resolvent-identity}.  Taking gradients yields
\[
 \nabla L_B^{-1/2}u
 =\mathcal R_Bu-\int_0^\infty\nabla g_{B,s}u\,\dd s
 =\mathcal R_Bu-\nabla g_Bu.
\]
Equations \eqref{eq:explicit-potential-gradient} and
\eqref{eq:homogeneous-correction-bound} bound the two terms by
\(C_p\|u\|_p\).  The estimate extends to every \(u\in L^p(\Omega)\) by
density.

To this end, we verify the assertion under \eqref{eq:overlapping-models}.
Choose the transition regions of the cutoffs and the supports of the
auxiliary functions $\eta$ and $\theta$ inside
$B(0,R_c)\setminus\overline{B(0,R_-)}$, with the properties required
in Section~\ref{sec:exterior-reverse}.  In particular, $\phi_c$ vanishes
near the artificial boundary and $\phi_e$ vanishes on a neighborhood of
$\overline{B(0,R_-)}$.  Use $L_{0,c,B}$ for the core and $L_\infty$
for the end.  For the corresponding model functions $v$ and $w$, the
matching assumptions give 
\[
 \begin{aligned}
 L_B(\phi_cv)&=\phi_cL_{0,c,B}v+[L_B,\phi_c]v,\\
 L_B(\phi_ew)&=\phi_eL_\infty w+[L_B,\phi_e]w.
 \end{aligned}
\]
The analogous identities with $\rho_c$ and $\rho_e$ give the left
parametrix.  There is no coefficient-mismatch term, since each cutoff
is supported where the relevant coefficients agree.  Replace
$H_{c,B}$ by $1+L_{0,c,B}$ and every whole-space semigroup and inverse
by that of $L_\infty$.  The model estimates then use exactly
$p<p_{0,B}$ and $p<p_\infty$.  The exterior form commutators, the
Neumann balancing identities, and their compactness proofs are
unchanged.  Thus the homogeneous inverse theorems and the error
estimates hold with $P_B$ replaced by $\widehat P_B$.  The three steps
above complete the proof, with $p<n$ still required for Dirichlet data.
\end{proof}

\part{Applications}\label{part3}

\section{Applications}
\label{sec:applications}

\subsection{VMO coefficients}

We derive the bounded-core estimates within the proof of
Corollary~\ref{cor:intro-VMO}.  Theorem~\ref{thm:intro-forward} then yields
an alternative proof in every stated dimension through the unified
Poisson--homogeneous construction of Sections~\ref{sec:forward-parametrix}--\ref{sec:proof-forward}.

\begin{proof}[Proof of Corollary~\ref{cor:intro-VMO}]
Assume that $A\in\mathrm{VMO}(\R^n)$.    Shen's Dirichlet theorem \cite[Theorem~C]{Shen05} yields, for some
$\varepsilon_D>0$,
\begin{equation}\label{eq:Shen-core-ranges}
 p_{c,D}\geq
 \begin{cases}
  3+\varepsilon_D,
     &n\geq3\text{ and }\partial\Omega\text{ is Lipschitz},\\
  \infty,
     &\partial\Omega\text{ is }C^1.
 \end{cases}
\end{equation}
For Neumann data, Geng's conormal estimate
\cite[Theorem~1.2]{Geng12} supplies a number $\varepsilon_N>0$ such that the estimate below holds for every
\[
 (3+\varepsilon_N)'<q<2\quad(n\geq3),
 \qquad
 (4+\varepsilon_N)'<q<2\quad(n=2),
\]
after decreasing $\varepsilon_N$ if necessary.  Given
\(F\in C_c^\infty(\Omega_c;\R^n)\), there exists a unique (up to constant) solution $u\in W^{1,q}(\Omega_c)$ such that $L_{c,N}u = \textrm{div} F$, and $\| \nabla u\|_{L^q(\Omega_c)} \lesssim \|F\|_{L^q(\Omega_c)}$. Normalize $u=L_{c,N}^{-1}\diver F$ to have mean zero. Then the Poincar\'e inequality gives $\|u\|_{L^q(\Omega_c)}\lesssim \|F\|_{L^q(\Omega_c)}$. The bounded-core reverse estimate \cite[Theorem~1]{AuscherTchamitchian01} yields
\[
 \begin{aligned}
 \|L_{c,N}^{-1/2}\diver F\|_{L^q(\Omega_c)}
 &=\|L_{c,N}^{1/2}u\|_{L^q(\Omega_c)}\lesssim\|u\|_{L^q(\Omega_c)}+\|\nabla u\|_{L^q(\Omega_c)}
 \lesssim\|F\|_{L^q(\Omega_c)}.
 \end{aligned}
\]
The estimate extends to every \(F\in L^q(\Omega_c;\R^n)\) by continuity. It follows by duality that 
\begin{equation}\label{eq:Geng-core-ranges}
 p_{c,N}\geq
 \begin{cases}
  3+\varepsilon_N,
     &n\geq3\text{ and }\partial\Omega\text{ is Lipschitz},\\
  4+\varepsilon_N,
     &n=2\text{ and }\partial\Omega\text{ is Lipschitz},\\
  \infty,
     &\partial\Omega\text{ is }C^1.
 \end{cases}
\end{equation}
The $C^1$ conclusions in \eqref{eq:Shen-core-ranges} and
\eqref{eq:Geng-core-ranges} follow from Auscher--Qafsaoui \cite{AuscherQafsaoui}. The cited bounded-domain estimates are commonly stated for the unshifted
operator.  They imply the shifted estimates in the definition of \(p_{c,B}\) through
\[
 \nabla(1+L_{c,B})^{-1/2}
 =
 \nabla L_{c,B}^{-1/2}
 \left(\frac{L_{c,B}}{1+L_{c,B}}\right)^{1/2}.
\]
The last factor is bounded on $L^p$, $1<p<\infty$, by the multiplier theorem from \cite{Stein70}.

Combining \eqref{eq:Shen-core-ranges}--\eqref{eq:Geng-core-ranges} with Theorem~\ref{thm:intro-forward} and the standard low-exponent estimate \eqref{eq:exterior-low-Riesz} yields the desired result.

\end{proof}

\begin{corollary}\label{cor:layered}
Assume that $\mathbb R^n\setminus\Omega\subset B(0,R)$ and, after an orthogonal change of coordinates, $A(x)=\mathfrak A(x_1)$, where $\mathfrak A:\mathbb R\to\mathbb R^{n\times n}$ is measurable,
real, symmetric and uniformly elliptic.  Suppose additionally that
\[
 \mathfrak A(t)=A_*
 \qquad\text{for } |t|\le 2R,
\]
where $A_*$ is a fixed positive definite matrix. Suppose that $\partial\Omega$ is $C^1$. Then for $n\ge 2$,
\[
 \nabla L_N^{-1/2}:L^p(\Omega)\longrightarrow
 L^p(\Omega;\mathbb R^n),
 \qquad 1<p<\infty,
\]
and
\[
 \nabla L_D^{-1/2}:L^p(\Omega)\longrightarrow
 L^p(\Omega;\mathbb R^n),
 \qquad 1<p<n.
\]
for $n\ge3$.
\end{corollary}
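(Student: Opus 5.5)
We will apply the overlapping-models version of Theorem~\ref{thm:intro-forward}, taking $A_0=A_*$ near the obstacle and $A_\infty=A$ on all of $\R^n$. Since $\R^n\setminus\Omega\subset B(0,R)$ and $\mathfrak A(x_1)=A_*$ whenever $|x_1|\le 2R$, the coefficient $A$ equals the constant matrix $A_*$ on $B(0,2R)$. Choose $R<R_-<R_c<R_+\le 2R$ and set $A_0=A_*$ on $B(0,R_+)$. Then \eqref{eq:overlapping-models} holds: $A=A_0$ on $\Omega\cap B(0,R_+)$ by the layered assumption, and $A=A_\infty$ everywhere trivially. It remains to show $p_\infty=\infty$ and $p_{0,B}=\infty$. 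Theorem~\ref{thm:intro-forward}(i) then gives the Neumann range $1<p<\infty$ for $n\ge2$, and (ii) gives $1<p<n$ for $n\ge3$.

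\emph{Core exponent.} The core operator $L_{0,c,B}$ is the pure $B$-realization of the constant-coefficient operator $-\diver(A_*\nabla)$ on $\Omega_c=\Omega\cap B(0,R_c)$. Its boundary is $C^1$: the physical part is $C^1$ by hypothesis and the artificial sphere is smooth. Constant coefficients are trivially VMO, so the $C^1$ cases of \eqref{eq:Shen-core-ranges} and \eqref{eq:Geng-core-ranges} (Auscher--Qafsaoui, via the argument in the proof of Corollary~\ref{cor:intro-VMO}, including the shift from $L_{c,B}$ to $1+L_{c,B}$) give $p_{0,B}=\infty$ for both $B=D$ and $B=N$. The only point to check here is that those citations apply to a core whose boundary has two components with the same pure condition. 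This should follow because the arguments are local at the boundary.

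\emph{End exponent.} This is the main obstacle: we must show $\nabla L_\infty^{-1/2}$ is bounded on $L^q(\R^n)$ for all $q\ge2$ when $A_\infty(x)=\mathfrak A(x_1)$ is merely measurable in $x_1$. Kenig's counterexample does not apply, since it is genuinely two-dimensional in its coefficient dependence. We would use the reverse H\"older characterization of $p_L$ (\cite{Shen05,ACDH04}): it suffices that $\nabla u\in L^\infty_{\mathrm{loc}}$ for weak solutions of $L_\infty u=0$, with the scale-invariant estimate $\sup_{B}|\nabla u|\lesssim(\fint_{2B}|\nabla u|^2)^{1/2}$. For one-dimensional coefficients this is classical. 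Since $A$ does not depend on $x'=(x_2,\dots,x_n)$, the tangential derivatives $\partial_{x'}u$ are again solutions, so by iterated Caccioppoli estimates and Sobolev embedding in $x'$, $u$ is smooth in $x'$ with the correct bounds, and $\partial_{x'}u$ is bounded by De Giorgi--Nash--Moser. The conormal flux $\sigma=(\mathfrak A\nabla u)_1$ satisfies $\partial_1\sigma=-\sum_{k\ge2}\partial_k(\mathfrak A\nabla u)_k$, and the right-hand side involves only $\partial_1 u$ and tangential derivatives of solutions. Solving the first row for $\partial_1u$ (using $\mathfrak A_{11}\ge\lambda$) gives $\partial_1u=\mathfrak A_{11}^{-1}(\sigma-\sum_{k\ge2}\mathfrak A_{1k}\partial_ku)$. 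We would show $\sigma$ is locally bounded, since it is an antiderivative in $x_1$ of a quantity controlled in $L^2_{x_1}L^\infty_{x'}$, and hence $\partial_1u\in L^\infty$. We expect this step, the $L^\infty$ gradient bound for layered coefficients, to need the most care; the rest is bookkeeping. The reverse H\"older inequality at every exponent then gives $p_\infty=\infty$, and the two cases of the corollary follow.
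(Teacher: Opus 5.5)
Your proposal is correct, and the scaffolding is the same as the paper's. Using the overlapping-models version with $A_0=A_*$ and $A_\infty=A$ is only a cosmetic variant of applying Theorem~\ref{thm:intro-forward} directly with $R<R_c<2R$: here $p_\infty=p_L$ and $p_{0,B}=p_{c,B}$. Your treatment of the core exponent through the $C^1$ bounded-domain theory also coincides with the paper's.

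The genuine difference is in showing $p_\infty=\infty$. The paper quotes Dong--Kim's interior estimate for partially BMO coefficients \cite[Corollary~8.3]{DK10}. Because $A$ has zero oscillation in $x'$, this gives a reverse H\"older inequality for $\nabla u$ at each finite exponent, uniformly in scale, and the paper then invokes \cite[Theorem~A]{Shen05}. You instead run the classical laminate argument. Exact invariance of $\mathfrak A(x_1)$ under $x'$-translations makes every $\partial_{x'}^\beta u$ a solution, De Giorgi--Nash--Moser bounds $\nabla_{x'}u$, and $\partial_1u$ is recovered from the conormal flux $\sigma=(\mathfrak A\nabla u)_1$.

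Your route is self-contained and gives the stronger Lipschitz, i.e.\ $L^\infty$, endpoint of the reverse H\"older scale. However, it depends on exact layering. The cited Dong--Kim theory also covers coefficients that are merely measurable in $x_1$ with small oscillation in $x'$, where differentiating in $x'$ is not available.

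To finish your sketch, note that bounding $\sigma$ requires control of $\sigma$ itself, not only of $\partial_1\sigma$. Writing $\mathfrak A=(a_{kj})$, both $\partial_{x'}^\beta\sigma=(\mathfrak A\nabla\partial_{x'}^\beta u)_1$ and $\partial_1\partial_{x'}^\beta\sigma=-\sum_{k\ge2}\sum_j a_{kj}\,\partial_j\partial_k\partial_{x'}^\beta u$ lie in $L^2_{\mathrm{loc}}$ for every $\beta$. Hence $\sigma\in H^1_{x_1}H^m_{x'}\subset L^\infty$ locally for $m>(n-1)/2$. The scale-invariant constant then follows because the layered class with fixed ellipticity is invariant under translations and dilations.
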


\begin{proof}
We first claim that $p_L=\infty$.  Fix $s>2$.  Since $A(x)$ depends only on $x_1$, its mean oscillation in the transverse variables is identically zero.  Dong--Kim's local estimate \cite[Corollary~8.3]{DK10}, applied with $q=2$ and $\lambda=0$, therefore yields
\[
 \left(\fint_B|\nabla u|^s\right)^{1/s}
 \le C_s
 \left(\fint_{2B}|\nabla u|^2\right)^{1/2}
\]
for every ball $B$ and every weak solution of $Lu=0$ in $2B$. The estimate holds at every scale because the transverse oscillation is zero.  Shen's whole-space characterization \cite[Theorem~A]{Shen05} consequently implies that $\nabla L^{-1/2}$ is bounded on $L^s(\mathbb R^n)$.  Since $s>2$ was arbitrary, $p_L=\infty$. Choose $R<R_c<2R$ and let $\Omega_c=\Omega\cap B(0,R_c)$. The coefficient equals the constant matrix $A_*$ throughout $\Omega_c$, and both components of $\partial\Omega_c$ are $C^1$. The all-exponent bounded-domain estimates therefore imply $p_{c,D}=p_{c,N}=\infty$. The conclusion now follows from Theorem~\ref{thm:intro-forward}.
\end{proof}

\begin{remark}
For example, take
\[
 A(x)=I+\alpha\mathbf 1_{\{x_1>3R\}}e_1\otimes e_1,
 \qquad \alpha>0.
\]
This belongs to the one-variable coefficient class treated by
Dong--Kim \cite[Theorem~2.2(iii)]{DK10}.
Every ball centered on $\{x_1=3R\}$ has mean oscillation
$\alpha/2$, independently of its radius. Thus
$A\notin\mathrm{VMO}(\mathbb R^n)$, while
Corollary~\ref{cor:layered} applies without any smallness
assumption on $\alpha$.
\end{remark}

\subsection{Nilpotent Lie group ends}

Theorem~\ref{thm:intro-connected} also applies to models that are not
products of Euclidean spaces and compact manifolds.

\begin{corollary}\label{cor:nilpotent-ends}
Let $\ell\geq2$, and let $G_1,\ldots,G_\ell$ be simply connected nilpotent Lie groups of
the same topological dimension $N\geq3$, each equipped with a
left-invariant Riemannian metric.  If $M$ is a connected sum of these
models with an arbitrary smooth compact gluing, then
\[
 \|\Delta_M^{1/2}f\|_{L^p(M)}
 \leq C_p\|df\|_{L^p(M)},
 \qquad f\in C_c^\infty(M),\quad 1<p<\infty.
\]
\end{corollary}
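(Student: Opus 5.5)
We split the exponent range at $\nu$, the growth dimension, and treat the two parts by different arguments. The volume growth of $G_j$ is polynomial of some homogeneous degree $D_j\geq N\geq3$ (Guivarc'h), and the left-invariant metric has bounded geometry, so Ricci curvature is bounded below. Each $G_j$ is unimodular with polynomial growth $V_j(r)\simeq r^{D_j}$ for $r\geq1$. The Varopoulos theory gives the heat-kernel bound $h_t\lesssim t^{-N/2}$ for $t\leq1$ and $h_t\lesssim t^{-D_j/2}$ for $t\geq1$.

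\emph{Choice of $\nu$ and the range $1<p<\nu$.} Set $\nu=N$, the common topological dimension. Since $D_j\geq N$, the combined bound $h_t\lesssim\min(t^{-N/2},t^{-D_j/2})\leq Ct^{-N/2}$ holds for all $t>0$. Because $N\geq3$, this yields the $L^2$ Sobolev inequality with exponent $2N/(N-2)$ on each $G_j$ (Varopoulos), which is the hypothesis of Theorem~\ref{thm:intro-connected} with $\nu=N>2$. On a Lie group with polynomial growth the local doubling and Poincar\'e inequality, hence a global version for left-invariant metrics, hold. Therefore \eqref{eq:RR-p} holds on each $G_j$ for all $1<p<\infty$ by Auscher--Coulhon--Duong--Hofmann, and in fact the Riesz transform is bounded on every $L^q$ by Alexopoulos. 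Theorem~\ref{thm:intro-connected} then gives \eqref{eq:RR-p} on $M$ for $1<p<N$.

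\emph{The range $p\geq N$.} Here we use duality: it suffices that $\nabla\Delta_M^{-1/2}$ is bounded on $L^{p'}(M)$ with $p'\leq N'=N/(N-1)<2$. I see two routes.
\begin{enumerate}
\item Apply Coulhon--Duong on $M$. This needs Gaussian upper bounds $h_t(x,x)\lesssim V(x,\sqrt t)^{-1}$ together with local doubling. $M$ need not be globally doubling, so this route does not apply directly. The known results for manifolds with ends with non-doubling volumes (Grigor'yan--Saloff-Coste heat kernels on connected sums, and the Riesz-transform results for $p\leq2$ on such sums, e.g.\ Carron or Devyver) would be the natural citations.
\item Avoid the forward transform and enlarge $\nu$. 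Theorem~\ref{thm:intro-connected} needs a common $\nu$ with $p<\nu$. Each $G_j$ satisfies $h_t\lesssim t^{-\nu/2}$ only for $\nu\leq N$ globally, so the small-time behavior pins $\nu\leq N$.
\end{enumerate}
The Sobolev dimension is therefore capped at $N$, and the large-$p$ range genuinely requires the forward low-exponent transform on $M$.

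The plan is to establish $\|\nabla\Delta_M^{-1/2}\|_{L^q\to L^q}<\infty$ for $1<q\leq2$ by Carron's forward gluing, which is known to hold for $q\leq2$ under Sobolev dimension $\nu>2$ on each end. The alternative is the Coulhon--Duong weak $(1,1)$ argument adapted to the Gaussian bounds of Grigor'yan--Saloff-Coste. Duality then covers $p\geq2$, which includes $p\geq N$. This step, the forward low-exponent bound on the non-doubling sum, is the main obstacle. If it were unavailable, I would instead note that Carron's theorem covers $N/(N-1)<q<N$ when all ends are the same group. For distinct groups I would rely on the $q\leq2$ theory. Since $N\geq3>2$, the ranges $1<p<N$ and $p\geq2$ overlap, and together they cover all $1<p<\infty$.
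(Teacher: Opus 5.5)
Your argument for $1<p<N$ is the paper's. Varopoulos's bound $\|e^{-t\Delta_{G_j}}\|_{1\to\infty}\lesssim t^{-N/2}$ gives the $L^2$ Sobolev inequality with $\nu=N$. Alexopoulos gives $(\mathrm R_q)$ on each $G_j$ for all $q$, duality gives the model reverse inequality, and Theorem~\ref{thm:intro-connected} transfers it to $M$.

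The gap is the range $p\ge N$. You correctly reduce it by duality to the forward bound for $\nabla\Delta_M^{-1/2}$ on $L^q(M)$ with $q=p'\le N/(N-1)$. But you never establish that bound, and neither proposed route supplies it.

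Carron's gluing theorem is not a ``$q\le2$ under $\nu>2$'' result. It requires $\nu>3$, which is unavailable, e.g., for $\R^3\#\mathrm{Heis}_3$, where the $\R^3$ end forces $\nu=3$. It also yields only $\nu/(\nu-1)<q<\nu$. That interval is invariant under $q\mapsto q'$, so its dual lies inside the range $1<p<\nu$ already given by Theorem~\ref{thm:intro-connected}. The exponents $q\le\nu/(\nu-1)$ you need are exactly the ones it excludes.

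The Coulhon--Duong weak $(1,1)$ argument uses global doubling in its Calder\'on--Zygmund decomposition, and $M$ is not doubling once the growth degrees differ. Adapting it to the heat-kernel bounds on manifolds with ends is the content of a separate theorem, not a routine step. So as written, your proof stops at $p<N$.

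The missing input, which the paper invokes, is the theorem of Jiang--Li--Lin on Riesz transforms for $1<q<2$ on manifolds with ends of different volume growth. Its hypotheses are checked endwise:
\begin{itemize}
\item each $G_j$ is non-collapsed and doubling;
\item each $G_j$ satisfies the Gaussian upper bound $h_{j,t}(x,y)\lesssim V_j(\sqrt t)^{-1}\exp(-d_j(x,y)^2/(Ct))$;
\item each $G_j$ has $V_j(R)/V_j(r)\simeq(R/r)^{Q_j}$ for $R\ge r\ge1$, with $Q_j\ge N>2$.
\end{itemize}
This gives $(\mathrm R_q)$ on $M$ for $1<q<2$. By duality it gives the reverse inequality for $2<p<\infty$, and $p=2$ is the energy identity.

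Separately, your claim that the Sobolev dimension is ``capped at $N$'' is backwards. The bound $h_t\lesssim t^{-\nu/2}$ for all $t>0$ holds precisely for $N\le\nu\le\min_jQ_j$: small times force $\nu\ge N$, and large times force $\nu\le Q_j$. Your conclusion that a forward low-exponent input is unavoidable is still correct, but the reason is that every admissible $\nu$ is finite, not that $\nu\le N$.
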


\begin{proof}
Write $V_j(r)$ for the volume of a radius-$r$ ball in $G_j$; left
invariance makes it independent of the center.  The volume and heat
kernel estimates on nilpotent groups give integers $Q_j\geq N$ such
that
\[
 V_j(r)\simeq
 \begin{cases}
  r^N,&0<r\leq1,\\
  r^{Q_j},&r\geq1,
 \end{cases}
 \qquad
 h_{j,t}(x,y)\leq
 \frac{C}{V_j(\sqrt t)}
 \exp\!\left(-\frac{d_j(x,y)^2}{Ct}\right);
\]
see \cite[Chapter~IV]{VaropoulosSaloffCosteCoulhon92}.  Consequently
$\|e^{-t\Delta_{G_j}}\|_{L^1\to L^\infty}\lesssim t^{-N/2}$ for
every $t>0$.  The semigroup characterization of the Sobolev inequality
\cite{Varopoulos85} yields
\[
 \|h\|_{L^{2N/(N-2)}(G_j)}\lesssim\|dh\|_{L^2(G_j)}.
\]
The Ricci curvature is bounded below by homogeneity.  Alexopoulos's
theorem \cite[Theorem~2]{Alexopoulos92}, applied to a left-invariant orthonormal
frame, gives the Riesz transform bound on each $G_j$ for every
$1<q<\infty$.  Duality therefore gives the model reverse inequality
for every $1<p<\infty$.  Theorem~\ref{thm:intro-connected}, with
$\nu=N$, proves the asserted estimate on $M$ for $1<p<N$.

To cover the remaining exponents, each model is non-collapsed, doubling,
and satisfies the displayed Gaussian upper bound.  Its volume growth
also satisfies
$V_j(R)/V_j(r)\simeq(R/r)^{Q_j}$ for $R\geq r\geq1$, with
$Q_j>2$.  The theorem of Jiang--Li--Lin
\cite{JiangLiLin25} thus gives the Riesz transform bound
on $M$ for $1<q<2$.  By duality, the reverse inequality holds on $M$
for $2<p<\infty$; at $p=2$ it is the energy identity.  Together with
the first range, this proves the corollary.
\end{proof}

For a concrete example, take $G_1=\R^3$ and
$G_2=\mathrm{Heis}_3$, the three-dimensional real Heisenberg group
with a left-invariant Riemannian metric.  The growth degrees at infinity
are $Q_1=3$ and $Q_2=4$.  A three-dimensional product
$\R^k\times\mathcal M$, with $\mathcal M$ compact, has growth degree
$k\leq3$, so the Heisenberg end is outside the class treated in
\cite{He26}. Note that the connected sum is also not globally doubling. Nevertheless, Corollary~\ref{cor:nilpotent-ends} gives
the reverse inequality for every $1<p<\infty$.

\section{Open problems}
\label{sec:open-problems}

The value of the parametrix is not limited to the main results above: it
turns global stability questions into explicit questions about compact
commutators and the zero-energy modes of the models.  The following
problems indicate where that point of view could lead.

\begin{problem}
\label{prob:minimal-reverse-gluing}
Replace the lower Ricci bound and common Sobolev dimension in
Theorem~\ref{thm:intro-connected} by assumptions expressed entirely in
terms of the model semigroups and their behavior on the fixed transition
annuli.  In particular, determine an endwise formulation that permits
ends with no common Sobolev dimension, without reducing to an explicit
product geometry.  Corollary~\ref{cor:nilpotent-ends} already allows
different volume-growth degrees when the models share a Sobolev dimension.
\end{problem}

Only local elliptic regularity is used on each fixed gluing annulus, whereas
the large-time decay comes from the model Sobolev inequality.  This suggests
that a lower Ricci bound is stronger than the mechanism itself.  The hard
point is to formulate stable off-diagonal derivative estimates with enough
uniformity to sum the Poisson error when the ends have incompatible
large-scale dimensions.

\begin{problem}
\label{prob:wave-gluing}
Let $M$ be a connected sum with complete end models
$M_1,\ldots,M_\ell$. Construct the wave parametrix for $\cos{t\sqrt{\Delta_M}}$. Find assumptions on the model propagators and the compact core and describe uniform $L^p$, dispersive, local-energy, or spectral-multiplier estimates for $\cos(t\sqrt{\Delta_M})$.
\end{problem}

Finite propagation controls the construction for bounded times, but an
arbitrary compact gluing may create trapped geodesics.  A global theorem
must therefore identify a quantitative hypothesis, such as local-energy
decay or a cutoff-resolvent estimate, which controls repeated returns of
the wave to the compact core.

\begin{problem}
\label{prob:spectral-multipliers}
Assume the model ends satisfy a H\"ormander spectral-multiplier theorem or a
quadratic estimate for a holomorphic functional calculus.  Determine whether compact gluing preserves the same differentiability threshold.
\end{problem}

The present proof controls one Poisson integral and one spatial derivative.
A multiplier theorem would require uniform vector-valued bounds for
families such as $(s\sqrt L)^ke^{-s\sqrt L}$ and a square-function estimate
for the form-valued Green correction.  This is a genuinely stronger demand
than estimating each fixed semigroup operator separately and may require
$R$-bounded, rather than scalar, off-diagonal estimates.

\smallskip

\textbf{Acknowledgments.}
Some of the results are included in the author's PhD thesis \cite{HePhD}. The author would like to thank his PhD supervisor, Adam Sikora, and Professor Lixin Yan for reading the notes and for their helpful comments. D.~He is supported by the China Postdoctoral Science Foundation (No.~2026T190831) and the NNSF of China (No.~42381947).

\bibliographystyle{abbrv}
\bibliography{references}

@book{AuscherTchamitchian98,
  author    = {Auscher, Pascal and Tchamitchian, Philippe},
  title     = {Square Root Problem for Divergence Operators and Related Topics},
  series    = {Ast\'erisque},
  number    = {249},
  pages     = {viii+172},
  publisher = {Soci\'et\'e Math\'ematique de France},
  year      = {1998}
}

@article{AuscherTchamitchian01,
  author  = {Auscher, Pascal and Tchamitchian, Philippe},
  title   = {Square roots of elliptic second order divergence operators on strongly {L}ipschitz domains: {$L^p$} theory},
  journal = {Math. Ann.},
  volume  = {320},
  number  = {3},
  year    = {2001},
  pages   = {577--623},
  doi     = {10.1007/PL00004487}
}

@article{AuscherTchamitchian03,
  author  = {Auscher, Pascal and Tchamitchian, Philippe},
  title   = {Square roots of elliptic second order divergence operators on
             strongly {L}ipschitz domains: {$L^2$} theory},
  journal = {J. Anal. Math.},
  volume  = {90},
  year    = {2003},
  pages   = {1--12},
  doi     = {10.1007/BF02786549}
}

@article{AHLMT02,
  author  = {Auscher, Pascal and Hofmann, Steve and Lacey, Michael and
             McIntosh, Alan and Tchamitchian, Philippe},
  title   = {The solution of the {K}ato square root problem for second
             order elliptic operators on {$\mathbb R^n$}},
  journal = {Ann. of Math. (2)},
  volume  = {156},
  number  = {2},
  year    = {2002},
  pages   = {633--654},
  doi     = {10.2307/3597201}
}

@article{AuscherCoulhon05,
  author  = {Auscher, Pascal and Coulhon, Thierry},
  title   = {Riesz transform on manifolds and {P}oincar\'e inequalities},
  journal = {Ann. Sc. Norm. Super. Pisa Cl. Sci. (5)},
  volume  = {4},
  number  = {3},
  year    = {2005},
  pages   = {531--555},
  doi     = {10.2422/2036-2145.2005.3.07}
}

@article{AuscherQafsaoui,
  author  = {Auscher, Pascal and Qafsaoui, Mahmoud},
  title   = {Observations on {$W^{1,p}$} estimates for divergence elliptic equations with {VMO} coefficients},
  journal = {Boll. Unione Mat. Ital. Sez. B Artic. Ric. Mat. (8)},
  volume  = {5},
  number  = {2},
  year    = {2002},
  pages   = {487--509}
}

@article{Carron07,
  author  = {Carron, Gilles},
  title   = {Riesz transforms on connected sums},
  journal = {Ann. Inst. Fourier (Grenoble)},
  volume  = {57},
  number  = {7},
  year    = {2007},
  pages   = {2329--2343},
  doi     = {10.5802/aif.2334}
}

@article{CarronCoulhonHassell06,
  author  = {Carron, Gilles and Coulhon, Thierry and Hassell, Andrew},
  title   = {Riesz transform and {$L^p$}-cohomology for manifolds with {E}uclidean ends},
  journal = {Duke Math. J.},
  volume  = {133},
  number  = {1},
  year    = {2006},
  pages   = {59--93},
  doi     = {10.1215/S0012-7094-06-13313-6}
}

@article{CoulhonDuong,
  author  = {Coulhon, Thierry and Duong, Xuan Thinh},
  title   = {Riesz transforms for {$1\leq p\leq2$}},
  journal = {Trans. Amer. Math. Soc.},
  volume  = {351},
  number  = {3},
  year    = {1999},
  pages   = {1151--1169},
  doi     = {10.1090/S0002-9947-99-02090-5}
}

@article{Devyver15,
  author  = {Devyver, Baptiste},
  title   = {A perturbation result for the {R}iesz transform},
  journal = {Ann. Sc. Norm. Super. Pisa Cl. Sci. (5)},
  volume  = {14},
  number  = {3},
  year    = {2015},
  pages   = {937--964},
  doi     = {10.2422/2036-2145.201107_010}
}

@article{Byun05,
  author  = {Byun, Sun-Sig},
  title   = {Elliptic equations with {BMO} coefficients in {L}ipschitz domains},
  journal = {Trans. Amer. Math. Soc.},
  volume  = {357},
  number  = {3},
  year    = {2005},
  pages   = {1025--1046},
  doi     = {10.1090/S0002-9947-04-03624-4}
}

@article{Geng12,
  author  = {Geng, Jun},
  title   = {{$W^{1,p}$} estimates for elliptic problems with {N}eumann boundary conditions in {L}ipschitz domains},
  journal = {Adv. Math.},
  volume  = {229},
  number  = {4},
  year    = {2012},
  pages   = {2427--2448},
  doi     = {10.1016/j.aim.2012.01.004}
}

@book{GyryaSaloffCoste,
  author    = {Gyrya, Pavel and Saloff-Coste, Laurent},
  title     = {{N}eumann and {D}irichlet Heat Kernels in Inner Uniform Domains},
  series    = {Ast\'erisque},
  number    = {336},
  pages     = {vii+145},
  publisher = {Soci\'et\'e Math\'ematique de France},
  year      = {2011},
  doi       = {10.24033/ast.899}
}

@article{He26,
  author  = {He, Dangyang},
  title   = {Reverse {R}iesz inequality on manifolds with ends},
  journal = {J. Funct. Anal.},
  volume  = {290},
  number  = {10},
  year    = {2026},
  pages   = {111424},
  doi     = {10.1016/j.jfa.2026.111424}
}

@article{JiangLin24,
  author  = {Jiang, Renjin and Lin, Fanghua},
  title   = {Riesz transform on exterior {L}ipschitz domains and applications},
  journal = {Adv. Math.},
  volume  = {453},
  year    = {2024},
  pages   = {109852},
  doi     = {10.1016/j.aim.2024.109852}
}

@article{JiangYang25,
  author  = {Jiang, Renjin and Yang, Sibei},
  title   = {Some remarks on {R}iesz transforms on exterior {L}ipschitz domains},
  journal = {Forum Math. Sigma},
  volume  = {13},
  year    = {2025},
  pages   = {e58},
  doi     = {10.1017/fms.2025.19}
}

@article{HassellSikora09,
  author  = {Hassell, Andrew and Sikora, Adam},
  title   = {Riesz transforms in one dimension},
  journal = {Indiana Univ. Math. J.},
  volume  = {58},
  number  = {2},
  year    = {2009},
  pages   = {823--852},
  doi     = {10.1512/iumj.2009.58.3514}
}

@article{KillipVisanZhang16,
  author  = {Killip, Rowan and Vi\c{s}an, Monica and Zhang, Xiaoyi},
  title   = {Riesz transforms outside a convex obstacle},
  journal = {Int. Math. Res. Not. IMRN},
  volume  = {2016},
  number  = {19},
  year    = {2016},
  pages   = {5875--5921},
  doi     = {10.1093/imrn/rnv338}
}

@article{KaltonMitrea98,
  author  = {Kalton, Nigel J. and Mitrea, Marius},
  title   = {Stability results on interpolation scales of quasi-{B}anach
             spaces and applications},
  journal = {Trans. Amer. Math. Soc.},
  volume  = {350},
  number  = {10},
  year    = {1998},
  pages   = {3903--3922},
  doi     = {10.1090/S0002-9947-98-02008-X}
}

@book{BerghLofstrom,
  author    = {Bergh, J{\"o}ran and L{\"o}fstr{\"o}m, J{\"o}rgen},
  title     = {Interpolation Spaces: An Introduction},
  series    = {Grundlehren der mathematischen Wissenschaften},
  volume    = {223},
  publisher = {Springer-Verlag},
  address   = {Berlin},
  year      = {1976},
  doi       = {10.1007/978-3-642-66451-9}
}

@article{Shen05,
  author  = {Shen, Zhongwei},
  title   = {Bounds of {R}iesz transforms on {$L^p$} spaces for second order elliptic operators},
  journal = {Ann. Inst. Fourier (Grenoble)},
  volume  = {55},
  number  = {1},
  year    = {2005},
  pages   = {173--197},
  doi     = {10.5802/aif.2094}
}

@article{Sikora,
  author  = {Sikora, Adam},
  title   = {Riesz transform, Gaussian bounds and the method of wave equation},
  journal = {Math. Z.},
  volume  = {247},
  number  = {3},
  year    = {2004},
  pages   = {643--662},
  doi     = {10.1007/s00209-003-0639-3}
}

@article{Varopoulos85,
  author  = {Varopoulos, Nicholas Th.},
  title   = {{H}ardy--{L}ittlewood theory for semigroups},
  journal = {J. Funct. Anal.},
  volume  = {63},
  number  = {2},
  year    = {1985},
  pages   = {240--260},
  doi     = {10.1016/0022-1236(85)90075-4}
}

@article{ACDH04,
  author  = {Auscher, Pascal and Coulhon, Thierry and Duong, Xuan Thinh and Hofmann, Steve},
  title   = {Riesz transform on manifolds and heat kernel regularity},
  journal = {Ann. Sci. \`Ec. Norm. Sup\'er. (4)},
  volume  = {37},
  number  = {6},
  year    = {2004},
  pages   = {911--957},
  doi     = {10.1016/j.ansens.2004.10.003}
}

@article{CoulhonSikora08,
  author  = {Coulhon, Thierry and Sikora, Adam},
  title   = {Gaussian heat kernel upper bounds via the {Phragm\'en--Lindel\"of} theorem},
  journal = {Proc. Lond. Math. Soc. (3)},
  volume  = {96},
  number  = {2},
  year    = {2008},
  pages   = {507--544},
  doi     = {10.1112/plms/pdm050}
}

@book{Stein70,
  author    = {Stein, Elias M.},
  title     = {Topics in Harmonic Analysis Related to the
               {Littlewood--Paley} Theory},
  series    = {Annals of Mathematics Studies},
  volume    = {63},
  publisher = {Princeton University Press},
  address   = {Princeton, NJ},
  year      = {1970}
}

@book{AdamsFournier03,
  author    = {Adams, Robert A. and Fournier, John J. F.},
  title     = {Sobolev Spaces},
  edition   = {2},
  series    = {Pure and Applied Mathematics},
  volume    = {140},
  publisher = {Elsevier/Academic Press},
  address   = {Amsterdam},
  year      = {2003}
}

@book{Brezis11,
  author    = {Brezis, Haim},
  title     = {Functional Analysis, Sobolev Spaces and Partial Differential
               Equations},
  publisher = {Springer},
  address   = {New York},
  year      = {2011},
  doi       = {10.1007/978-0-387-70914-7}
}

@book{SteinSingular70,
  author    = {Stein, Elias M.},
  title     = {Singular Integrals and Differentiability Properties of
               Functions},
  series    = {Princeton Mathematical Series},
  volume    = {30},
  publisher = {Princeton University Press},
  address   = {Princeton, NJ},
  year      = {1970}
}

@article{Atkinson51,
  author  = {Atkinson, F. V.},
  title   = {The normal solvability of linear equations in normed spaces},
  journal = {Mat. Sb. (N.S.)},
  volume  = {28(70)},
  number  = {1},
  year    = {1951},
  pages   = {3--14}
}

@incollection{GeissertHeckHieber06,
  author    = {Gei{\ss}ert, Matthias and Heck, Horst and Hieber, Matthias},
  title     = {On the equation \(\operatorname{div} u=g\) and {B}ogovski\u{\i}'s
               operator in {S}obolev spaces of negative order},
  booktitle = {Partial Differential Equations and Functional Analysis},
  series    = {Operator Theory: Advances and Applications},
  volume    = {168},
  publisher = {Birkh{\"a}user},
  address   = {Basel},
  year      = {2006},
  pages     = {113--121},
  doi       = {10.1007/3-7643-7601-5_7}
}

@article{DK10,
  author  = {Dong, Hongjie and Kim, Doyoon},
  title   = {Elliptic equations in divergence form with partially {BMO}
             coefficients},
  journal = {Arch. Ration. Mech. Anal.},
  volume  = {196},
  number  = {1},
  pages   = {25--70},
  year    = {2010},
  doi     = {10.1007/s00205-009-0228-7}
}

@article {BCLS95,
    AUTHOR = {Bakry, D. and Coulhon, T. and Ledoux, M. and Saloff-Coste, L.},
     TITLE = {Sobolev inequalities in disguise},
   JOURNAL = {Indiana Univ. Math. J.},
  FJOURNAL = {Indiana University Mathematics Journal},
    VOLUME = {44},
      YEAR = {1995},
    NUMBER = {4},
     PAGES = {1033--1074},
      ISSN = {0022-2518,1943-5258},
   MRCLASS = {46E35 (26D10 35J99)},
  MRNUMBER = {1386760},
MRREVIEWER = {Steven\ George\ Krantz},
       DOI = {10.1512/iumj.1995.44.2019},
       URL = {https://doi.org/10.1512/iumj.1995.44.2019},
}

@article {Alexopoulos92,
    AUTHOR = {Alexopoulos, G.},
     TITLE = {An application of homogenization theory to harmonic analysis:
              {H}arnack inequalities and {R}iesz transforms on {L}ie groups
              of polynomial growth},
   JOURNAL = {Canad. J. Math.},
  FJOURNAL = {Canadian Journal of Mathematics. Journal Canadien de
              Math\'ematiques},
    VOLUME = {44},
      YEAR = {1992},
    NUMBER = {4},
     PAGES = {691--727},
      ISSN = {0008-414X,1496-4279},
   MRCLASS = {22E15 (22E30 43A80)},
  MRNUMBER = {1178564},
MRREVIEWER = {Ewa\ Damek},
       DOI = {10.4153/CJM-1992-042-x},
       URL = {https://doi.org/10.4153/CJM-1992-042-x},
}

@article {JiangLiLin25,
    AUTHOR = {Jiang, Ren-Jin and Li, Hong-Quan and Lin, Hai-Bo},
     TITLE = {Riesz transform on manifolds with ends of different volume
              growth for {$1<p<2$}},
   JOURNAL = {J. Geom. Anal.},
  FJOURNAL = {Journal of Geometric Analysis},
    VOLUME = {35},
      YEAR = {2025},
    NUMBER = {11},
     PAGES = {Paper No. 339, 42},
      ISSN = {1050-6926,1559-002X},
   MRCLASS = {42B20 (43A85 58J35)},
  MRNUMBER = {4955105},
       DOI = {10.1007/s12220-025-02169-z},
       URL = {https://doi.org/10.1007/s12220-025-02169-z},
}

@book{VaropoulosSaloffCosteCoulhon92,
  author    = {Varopoulos, N. Th. and Saloff-Coste, L. and Coulhon, T.},
  title     = {Analysis and Geometry on Groups},
  series    = {Cambridge Tracts in Mathematics},
  volume    = {100},
  publisher = {Cambridge University Press},
  address   = {Cambridge},
  year      = {1992}
}

@article {CJKS,
    AUTHOR = {Coulhon, Thierry and Jiang, Renjin and Koskela, Pekka and
              Sikora, Adam},
     TITLE = {Gradient estimates for heat kernels and harmonic functions},
   JOURNAL = {J. Funct. Anal.},
  FJOURNAL = {Journal of Functional Analysis},
    VOLUME = {278},
      YEAR = {2020},
    NUMBER = {8},
     PAGES = {108398, 67},
      ISSN = {0022-1236,1096-0783},
   MRCLASS = {53C23 (31C05 31C25 31E05 35K08 43A85 58J05 58J35)},
  MRNUMBER = {4056992},
MRREVIEWER = {Nelia\ Charalambous},
       DOI = {10.1016/j.jfa.2019.108398},
       URL = {https://doi.org/10.1016/j.jfa.2019.108398},
}

@article {Jiang21,
    AUTHOR = {Jiang, Renjin},
     TITLE = {Riesz transform via heat kernel and harmonic functions on
              non-compact manifolds},
   JOURNAL = {Adv. Math.},
  FJOURNAL = {Advances in Mathematics},
    VOLUME = {377},
      YEAR = {2021},
     PAGES = {Paper No. 107464, 50},
      ISSN = {0001-8708,1090-2082},
   MRCLASS = {58J35 (35B65 35K05 42B20 58J05)},
  MRNUMBER = {4186007},
MRREVIEWER = {Li\ Chen},
       DOI = {10.1016/j.aim.2020.107464},
       URL = {https://doi.org/10.1016/j.aim.2020.107464},
}

@article {JerisonKenig95,
    AUTHOR = {Jerison, David and Kenig, Carlos E.},
     TITLE = {The inhomogeneous {D}irichlet problem in {L}ipschitz domains},
   JOURNAL = {J. Funct. Anal.},
  FJOURNAL = {Journal of Functional Analysis},
    VOLUME = {130},
      YEAR = {1995},
    NUMBER = {1},
     PAGES = {161--219},
      ISSN = {0022-1236,1096-0783},
   MRCLASS = {35J25 (46E35)},
  MRNUMBER = {1331981},
MRREVIEWER = {H.\ Triebel},
       DOI = {10.1006/jfan.1995.1067},
       URL = {https://doi.org/10.1006/jfan.1995.1067},
}

@article {Zanger2000,
    AUTHOR = {Zanger, Daniel Z.},
     TITLE = {The inhomogeneous {N}eumann problem in {L}ipschitz domains},
   JOURNAL = {Comm. Partial Differential Equations},
  FJOURNAL = {Communications in Partial Differential Equations},
    VOLUME = {25},
      YEAR = {2000},
    NUMBER = {9-10},
     PAGES = {1771--1808},
      ISSN = {0360-5302,1532-4133},
   MRCLASS = {35J25},
  MRNUMBER = {1778780},
MRREVIEWER = {H.\ Triebel},
       DOI = {10.1080/03605302.2000.10824220},
       URL = {https://doi.org/10.1080/03605302.2000.10824220},
}

@article {HePhD,
    AUTHOR = {He, Dangyang},
     TITLE = {Analysis of the {L}aplacian on a class of nondoubling
              connected sums and manifolds with quadratically decaying
              {R}icci curvature},
   JOURNAL = {Bull. Aust. Math. Soc.},
  FJOURNAL = {Bulletin of the Australian Mathematical Society},
    VOLUME = {113},
      YEAR = {2026},
    NUMBER = {1},
     PAGES = {173--174},
      ISSN = {0004-9727,1755-1633},
   MRCLASS = {42B15 (42B20 46E35 49Q20 58J35)},
  MRNUMBER = {5021248},
       DOI = {10.1017/s000497272510049x},
       URL = {https://doi.org/10.1017/s000497272510049x},
}

\end{document}